\documentclass[11pt]{article}
\usepackage[margin=1in]{geometry}
\usepackage{amsmath}
\usepackage{amsfonts}
\usepackage{color}
\usepackage{latexsym}
\usepackage{tablefootnote}
\usepackage{epsfig}
\usepackage{wrapfig}
\usepackage{multirow}
\usepackage{mathtools}
\mathtoolsset{showonlyrefs=true}

\usepackage{tikz}
\usetikzlibrary{matrix,arrows.meta}
\usepackage{subcaption}
\usetikzlibrary{calc}
\usepackage{float}
\usepackage{array}
\usepackage{hyperref}
\usepackage{mathtools}
\usepackage{amssymb,amsthm}
\usepackage{algorithm,algorithmic}
\theoremstyle{definition}

\newcommand*{\QEDA}{\hfill\hbox{\vrule width1.0ex height1.0ex}}

\newcommand{\Argmax}{\operatorname{Argmax}}
\usepackage{makecell,booktabs}
\newcommand{\tO}{\tilde{\mathcal{O}}}
\usepackage[version=4]{mhchem}
\usepackage[strict]{changepage}

\usepackage{amsmath}
\usepackage{amsfonts}
\usepackage{latexsym}
\usepackage{amssymb}

\newtheorem{thm}{Theorem}[section]
\newtheorem{theorem}[thm]{Theorem}

\newtheorem{lemma}[thm]{Lemma}

\newtheorem{proposition}[thm]{Proposition}

\newtheorem{definition}[thm]{Definition}

\newtheorem{remark}[thm]{Remark}

\newcommand{\beq}{\begin{equation}}
\newcommand{\eeq}{\end{equation}}
\newcommand{\beqa}{\begin{eqnarray}}
\newcommand{\eeqa}{\end{eqnarray}}
\newcommand{\beqas}{\begin{eqnarray*}}
\newcommand{\eeqas}{\end{eqnarray*}}
\newcommand{\bi}{\begin{itemize}}
\newcommand{\ei}{\end{itemize}}

\newcommand{\R}{\mathbb{R}}

\newcommand{\lam}{{\lambda}}

\newcommand{\inner}[2]{\langle #1,#2\rangle}

\newcommand{\argmin}{\mathrm{argmin}\,}
\newcommand{\argmax}{\mathrm{argmax}\,}

\newcommand{\dom}{\mathrm{dom}\,}

\newcommand{\Argmin}{\mathrm{Argmin}\,}

\newcommand{\tx}{\tilde x}

\newcommand{\D}{\operatorname{D}}
\begin{document}
	\title{Accuracy Certificates for Convex Optimization at Accelerated Rates via Primal-Dual Averaging}
	\date{April 20, 2026}
    \ifdefined\anon
    \author{Anonymous Authors}
    \else
	\author{Matthew X. Burns\thanks{
        Department of Electrical and Computer Engineering, University of Rochester, Rochester, NY 14627 (email: {\tt mburns13@ur.rochester.edu}).}
  \and
		Jiaming Liang \thanks{Goergen Institute for Data Science and Artificial Intelligence and Department of Computer Science, University of Rochester, Rochester, NY 14620 (email: {\tt jiaming.liang@rochester.edu}). This work was supported by AFOSR grant FA9550-25-1-0182.
		}
		 }
         
    \fi
	\maketitle
    \begin{abstract}
        Many works in convex optimization provide rates for achieving a small primal gap. However, this quantity is typically unavailable in practice. In this work, we show that solving a regularized surrogate with algorithms based on simple primal-dual averaging provides non-asymptotic convergence guarantees for a \textit{computable} optimality certificate. We first analyze primal and dual methods based on one average, namely modified dual averaging and generalized conditional gradient, and establish $\tO(\varepsilon^{-1})$ certificate complexities. Motivated by asymmetries in the one-average case, we analyze a self-dual, two-average method that preserves symmetry while losing certificate guarantees. To recover certificate convergence, we propose a three-average method that achieves an accelerated $\tO(\varepsilon^{-1/2})$ certificate complexity. Furthermore, we prove primal-dual algorithm correspondences for the one, two, and three-average cases. In particular, the primal three-average accelerated method mirrors the well-known gradient extrapolation method in the dual. By interpreting our results through the lens of zero-sum matrix games and Fisher markets, we further connect primal-dual averaging methods to game theory and market dynamics.\\
        

        \ifdefined\anon
        \else
		{\bf Key words.} primal-dual averaging, conditional gradient method, accelerated method, gradient extrapolation method, accuracy certificate

        \fi
	\end{abstract}

    \section{Introduction}
    Continuous convex optimization has become ubiquitous in large-scale computing, with applications in machine learning~\cite{Lan2020,shalev2012online,sra2011optimization,wright2022optimization}, compressive sensing~\cite{candes2006robust,candes2008exact}, statistical inference~\cite{juditsky2020statistical}, and imaging science~\cite{beckFastIterativeShrinkageThresholding2009,chambolle2011first} (to name a few). In several settings, we require solutions to satisfy a defined optimality bound. Since we cannot run methods indefinitely, it is crucial to have some \textit{certificate} of optimality. In this work, we show that simple methods based on regularization and primal-dual averaging can provide provably convergent, computable optimality certificates. 
    Our focus is the convex smooth composite optimization (CSCO) problem
    \begin{equation}\label{def:probIntro}
        \phi_*=\min_{x\in\R^n}\{\phi(x):=f(x)+h(x)\},
    \end{equation}
    where $f$ is a closed proper convex function that is $L$-smooth with respect to the primal norm $\|\cdot\|$, and $h$ is merely closed proper convex with bounded domain. For fixed $\varepsilon>0$, we say that a point $x\in\R^n$ is an $\varepsilon$-solution if it achieves an $\varepsilon$-small primal gap, i.e., $\phi(x) - \phi_* \le \varepsilon$.

    Instead of solving~\eqref{def:probIntro} directly, our strategy is to solve the regularized problem
    \begin{equation}\label{def:probIntro_reg}
        \phi_*^\alpha=\min_{x\in\R^n}\{\phi^\alpha(x):=f(x)+h(x)+\alpha w(x)\},
    \end{equation}
    where $w:\R^n\to[0,+\infty]$ is non-negative\footnote{This assumption can be made without loss of generality by translation, since strong-convexity implies that $w$ is bounded from below on $\dom h$.}, closed, and $1$-strongly convex with respect to $\|\cdot\|$ on $\dom h$ and satisfies the following: (1) the generalized linear minimization oracle (GLMO) 
        \begin{equation}
            \min_{x\in\R^n}\left\{\inner{v}{x}+h(x)+\alpha w(x)\right\},
        \end{equation}
        is efficiently computable for all $\alpha>0$ and $v\in\R^n$, and (2)
        $w$ is bounded on $\dom h$ with $M:=\max_{x\in\dom h}w(x)<\infty$.

            
    By choosing $\alpha = \mathcal{O}(\varepsilon/M)$ and solving \eqref{def:probIntro_reg} to $\mathcal{O}(\varepsilon)$ accuracy, we can obtain an $\mathcal{O}(\varepsilon)$ solution to~\eqref{def:probIntro} (see Lemma~\ref{lem:reg_connection}). Adding $\mathcal{O}(\varepsilon)$ regularization can have additional computational and theoretical benefits, such as parallel computation in discrete optimal transport~\cite{cuturiSinkhornDistancesLightspeed2013}, nonergodic convergence rates in bilinear saddle-point problems~\cite{cen2021fast}, and more stable variable selection in sparse regression~\cite{zou2005regularization}. 
    
    Recent work~\cite{gutmanPerturbedFenchelDuality2023} demonstrates that we can obtain stronger primal-dual convergence results by considering the Fenchel-Rockafellar dual to~\eqref{def:probIntro_reg},
    
\begin{equation}\label{eq:fenchel_rockafellar_dual}
        \phi^\alpha_*=\max_{z\in\R^n}\{-(h^{\alpha})^*(-z)-f^*(z)\}=-\min_{z\in\R^n}\{\psi^\alpha(z):=(h^{\alpha})^*(-z)+f^*(z)\}=-\psi^\alpha_*,
    \end{equation}
     where we define the aggregate function $h^\alpha(\cdot)=h(\cdot)+\alpha w(\cdot)$ for convenience and $f^*$ is the convex conjugate of $f$. By our assumptions on $f$, $h$, and $w$, we have that $(h^\alpha)^*$ is $(1/\alpha)$-smooth and $f^*(\cdot)$ is $(1/L)$-strongly convex relative to the dual norm $\|\cdot\|_*$. 

    Numerous prior works have revealed deep connections between seemingly disparate optimization algorithms in the CSCO setting by demonstrating that they are ``dual'' to each other: one algorithm solving the primal~\eqref{def:probIntro_reg} generates the same iterate sequences as another algorithm solving the dual~\eqref{eq:fenchel_rockafellar_dual}. Duality correspondences often lead to simplified convergence proofs~\cite{lu2021generalized, tibshiraniDykstraAlgorithmADMM2017}, optimality certificate guarantees~\cite{bach2015duality}, and novel algorithm variants~\cite{tibshiraniDykstraAlgorithmADMM2017,burns2026improved,wangNoregretDynamicsFenchel2023a}. Existing primal-dual algorithm pairs include generalized conditional gradient (GCG) and mirror descent~\cite{bach2015duality}, ADMM and Dykstra's algorithm~\cite{tibshiraniDykstraAlgorithmADMM2017}, cyclic coordinate descent and ADMM~\cite{tibshiraniDykstraAlgorithmADMM2017}, a stochastic variant of GCG and randomized coordinate descent~\cite{lu2021generalized}, and a primal-dual cutting-plane method and GCG~\cite{fersztand2024proximal,liangPrimaldualProximalBundle2025a}. Beyond the setting of~\eqref{def:probIntro}, duality correspondences can provide more efficient algorithms in constrained problems~\cite{burns2026improved} and one-level convex reformulations in bilevel optimization~\cite{aboussoror2011fenchel}.

    \begin{table}[]
        \centering
        \begin{tabular}{lccccc}
            \toprule
            \textbf{Averages} & \textbf{Algorithm}&  \textbf{Ref} & \textbf{Perspective} & \textbf{Complexity} & \textbf{Cert. Complexity}
            \\
            \midrule
            \multirow{ 2}{*}{1} & MDA & \cite{nesterov2009primal,liangPrimaldualProximalBundle2025a} & Primal & $\tO(\varepsilon^{-1})$ & $\tO(\varepsilon^{-1})$\\
            & GCG &  \cite{brediesGeneralizedConditionalGradient2009} & Dual & $\tO(\varepsilon^{-1})$ & $\tO(\varepsilon^{-1})$\\
            \midrule
            \multirow{ 1}{*}{2} & AggGCG & \cite{zhao2023generalized} & Primal/Dual & $\tO(\varepsilon^{-1})$ & -\\
            \midrule
            \multirow{ 2}{*}{3} & TAA & \textbf{This Work} & Primal & $\tO(\varepsilon^{-1/2})$ & $\tO(\varepsilon^{-1/2})$\\
            & GEM & \cite{lan2018random} & Dual & $\tO(\varepsilon^{-1/2})$ & $\tO(\varepsilon^{-1/2})$\\
            \bottomrule
        \end{tabular}
        \caption{High-level summary of primal-dual algorithm pairs considered in this work. We view a ``Primal'' algorithm as solving problem~\eqref{def:probIntro_reg} and a ``Dual'' algorithm as solving problem~\eqref{eq:fenchel_rockafellar_dual}. ``Complexity'' refers to the complexity of computing an $\varepsilon$-solution to~\eqref{def:probIntro}, while ``Cert. Complexity'' refers to the complexity of computing a verifiable primal-dual certificate of $\varepsilon$-optimality (See Definition~\ref{def:pd_cert}). Acronyms are as follows: ``MDA'' is ``Modified Dual Averaging'', ``GCG'' is ``Generalized Conditional Gradient'', ``AggGCG'' is ``Aggregated GCG'', ``TAA'' is ``Three-Average Acceleration'', and ``GEM'' is the ``Gradient Extrapolation Method''.}
        \label{tab:intro}
    \end{table}
    \noindent\textbf{Our contributions} In this work, we characterize primal-dual pairs in three algorithmic classes targeting the regularized problem~\eqref{def:probIntro_reg}, methods with one, two, and three averages, along with their complexity and optimality certificate guarantees. The algorithms considered are summarized in Table~\ref{tab:intro}. For one average, we show that a modified dual averaging (MDA) method in the primal is equivalent to GCG in the dual. Furthermore, both algorithms provide computable primal-dual $\varepsilon$-optimality certificates with $\tO(\varepsilon^{-1})$ complexity. For two averages, we show that a previously proposed method \cite{zhao2023generalized}, which we term aggregated GCG (AggGCG), is self-dual. While the algorithm possesses appealing symmetry, it does not admit the same straightforward primal-dual certificate convergence as the one-average case. Finally, we propose a three-average accelerated (TAA) method. TAA recovers the computable primal-dual certificate of the one-average case with an accelerated $\tO(\varepsilon^{-1/2})$ complexity. Furthermore, an interesting finding is that, through duality, TAA is equivalent to a variant of the well-known gradient extrapolation method (GEM) \cite{lan2018random}. As an additional benefit, we show that the GEM also admits a computable optimality certificate with $\tO(\varepsilon^{-1/2})$ complexity, a novel result in the CSCO setting. We provide additional insight into our results by game-theoretic interpretations for each pair of primal-dual correspondences and Fisher market interpretations for each of the three averaging methods in the primal space.

    \noindent\textbf{Outline} Section~\ref{sec:prelims} provides the key definitions and running examples. Section~\ref{sec:one_avg} considers the simple case of one-average methods, providing primal-dual equivalence results and $\tO(\varepsilon^{-1})$ certificate complexities for MDA and GCG. Moving to two averages, Section~\ref{sec:two_avg} formalizes the self-duality of AggGCG along with a $\tO(\varepsilon^{-1})$ primal gap complexity bound for~\eqref{def:probIntro}. Motivated by AggGCG's lack of computable certificate guarantees, Section~\ref{sec:three_avg} proposes the TAA method. Along with its certificate complexity, we further state its formal equivalence to the GEM. Conclusions and directions for future work are discussed in Section~\ref{sec:conclusion}. Technical proofs and self-contained analyses for each method are deferred to the appendices.

    \section{Preliminaries}\label{sec:prelims}
    Let $\R^n$ be the $n$-dimensional Euclidean space equipped with the standard inner product $\inner{\cdot}{\cdot}$ and norm $\|\cdot\|$. The $n$-dimensional unit simplex is given by $\Delta^n$.  For a closed convex function $f$, we denote the \textit{subdifferential} of $f$ at $x$ by $\partial f(x)$, and the \textit{linearization} of $f$ at $x_0\in\dom f$ as $
        \ell_f(\cdot;x_0):=f(x_0)+\inner{f'(x_0)}{\cdot-x_0},$ where $f'(x_0)\in\partial f(x_0)$. If $f$ is continuously differentiable, then $\partial f(x)=\{\nabla f(x)\}$.
    For a convex function $f$, we define the \textit{Bregman divergence} of $f$ as $\D_f(x\|y):=f(x)-\ell_f(x;y)$. We define the \textit{convex conjugate} of a closed and proper function $f$ as
    $f^*(y)=\max_{x\in\R^n}\{\inner{y}{x}-f(x)\}$.
    The conjugate $f^*$ is convex, and for a closed proper convex $f$ we have the identity $f=(f^*)^*$ (see~\cite[Theorems 4.3 and 4.8]{Beck2017}). We define the \textit{domain} of $f$ as $\dom f=\{x\in\R^n:f(x)<\infty\}$.
For $\mu\geq 0$, we say that $f$ is $\mu$\textit{-strongly convex} on $Q\subseteq\dom f$ with respect to the norm $\|\cdot\|$ if 
    $\D_f(x\|y) \geq \mu\|x-y\|^2/2$ for all $x,y\in Q$. Additionally, for $L>0$ we say that $f$ is $L$-\textit{smooth} with respect to the norm $\|\cdot\|$ on $Q$ if $\D_f(x\|y) \leq L\|x-y\|^2/2$ for all $x,y\in Q$.
    

    With basic definitions established, we motivate the substitution of~\eqref{def:probIntro_reg} to solve~\eqref{def:probIntro} by connecting a primal-dual solution of the regularized problem~\eqref{def:probIntro_reg} to a primal solution of the original objective~\eqref{def:probIntro}. See Appendix~\ref{appdx:technical} for the proof.
    \begin{lemma}\label{lem:reg_connection}
        Assume that $w:\R^n\to\R_+$ is a non-negative function and $M:=\max_{x\in\dom h}w(x)<\infty$ is bounded. Given $\varepsilon>0$ and choosing $\alpha \leq \varepsilon/(2M)$, if the primal-dual pair $(x,s)\in\dom h\times\R^n$ satisfies $\phi^\alpha(x)+\psi^\alpha(s)\leq\varepsilon/2$, then we have $\phi(x)-\phi_*\leq \varepsilon$.
    \end{lemma}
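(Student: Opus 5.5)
The plan is to chain three elementary inequalities: weak duality for the regularized pair, a sandwich of $\phi^\alpha$ between $\phi$ and $\phi+\alpha M$ on $\dom h$, and the choice $\alpha\le\varepsilon/(2M)$.

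\emph{Step 1 (weak duality).} For any $s\in\R^n$ we have $\psi^\alpha(s)\ge -\phi^\alpha_*$. By the Fenchel--Young inequality, for any $y\in\R^n$,
\[
f^*(s)\ge \inner{s}{y}-f(y) \quad\text{and}\quad (h^\alpha)^*(-s)\ge -\inner{s}{y}-h^\alpha(y).
\]
Adding these gives $\psi^\alpha(s)\ge -\phi^\alpha(y)$. Taking the infimum over $y$ yields $\psi^\alpha(s)\ge-\phi^\alpha_*$. This step does not need the strong duality equality in \eqref{eq:fenchel_rockafellar_dual}; weak duality suffices. Combining it with the hypothesis gives
\[
\phi^\alpha(x)-\phi^\alpha_*\le \phi^\alpha(x)+\psi^\alpha(s)\le\varepsilon/2 .
\]

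\emph{Step 2 (sandwich).} Because $0\le w\le M$ on $\dom h$ and $x\in\dom h$, we have $\phi(x)\le\phi^\alpha(x)$. Let $x^*$ be a minimizer of $\phi$. It lies in $\dom h$, and its existence follows from closedness and the bounded domain. If one prefers not to rely on existence, take any $y\in\dom h$ instead. Then
\[
\phi^\alpha_*\le\phi^\alpha(x^*)\le \phi_*+\alpha M .
\]
With $y$ in place of $x^*$ the same argument gives $\phi^\alpha_*\le\phi(y)+\alpha M$, and taking the infimum over $y$ recovers the bound.

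\emph{Step 3 (combine).} Putting the pieces together,
\[
\phi(x)-\phi_*\le \phi^\alpha(x)-\phi^\alpha_*+\alpha M\le \varepsilon/2+\varepsilon/2=\varepsilon,
\]
where the last inequality uses $\alpha\le\varepsilon/(2M)$.

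There is no real obstacle here. The only point needing care is justifying weak duality directly, which is done in Step 1, and noting that $x\in\dom h$ is required for the bound $w(x)\le M$ to apply.
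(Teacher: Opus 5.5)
Your proof is correct and follows the paper's argument: weak duality gives $\phi^\alpha(x)-\phi^\alpha_*\le \phi^\alpha(x)+\psi^\alpha(s)$, evaluating at a minimizer of $\phi$ in $\dom h$ gives $\phi^\alpha_*\le \phi_*+\alpha M$, nonnegativity of $w$ gives $\phi(x)\le\phi^\alpha(x)$, and then $\alpha\le\varepsilon/(2M)$ finishes. The only difference is that you derive weak duality directly from Fenchel--Young instead of citing the Fenchel--Rockafellar equality $\min\psi^\alpha=-\phi^\alpha_*$, which makes your version slightly more self-contained because it needs neither strong duality nor dual attainment.
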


    Now we define a function that will be critical in our definition of an optimality certificate.
    \begin{definition}[Aggregated Cutting Plane (ACP) Model]\label{def:acp_model}
        Given an initial point $y_0$, a set of points $\{x_i\}_{i=0}^{k-1}\in\dom \phi$ and a set of convex combination parameters $\zeta\in[0,1]^k$, we define the associated ACP model as
        \begin{equation}\label{def:ACP_model}
            \Gamma_0(x)=h^\alpha(x)+\ell_f(x;y_0),\quad \Gamma_{j+1}(x)=(1-\zeta_j)\Gamma_j(x)+\zeta_j(h^\alpha(x)+\ell_f(x;x_{j})),
        \end{equation}
        where $0\leq j\leq k-1$. We say that $\Gamma_k$ is the ACP model induced by $(y_0,\{x_i\}_{i=0}^{k-1},\zeta)$. If $k=0$, then we simply say that $\Gamma:=\Gamma_0$ is the single-cutting plane (SCP) model induced by $y_0$.
    \end{definition}
    The ACP model has been used in prior work on proximal bundle methods for convex nonsmooth optimization~\cite{liang2024unified,liang2024single}, however it has several appealing properties for broader convex optimization, as we summarize in the following lemma (whose proof is deferred to Appendix~\ref{appdx:technical}).
    \begin{lemma}\label{lem:model_props}
        Let $\Gamma_k(\cdot)$ be the ACP model for~\eqref{def:probIntro_reg} induced by $(y_0,\{x_i\}_{i=0}^{k-1},\zeta)$ for $y_0\in\dom h$, $\{x_i\}_{i=0}^{k-1}\subseteq \dom h$ and $\zeta\in[0,1]^k$. Then, the following hold:
        \begin{itemize}
            \item[{\rm a)}] for all $x\in\dom h$, $\Gamma_k(x)\leq \phi^\alpha(x)$;
            \item[{\rm b)}] $\Gamma_k$ is $\alpha$-strongly convex.
        \end{itemize}
        Furthermore, define $\{s_k\}_{k\ge 0}$ as 
            \begin{equation}\label{def:sk1}
                s_{0}=\nabla f(y_0),\quad s_{j+1}=(1-\zeta_j)s_j+\zeta_j \nabla f(x_{j})
            \end{equation}
            for $0\leq j \leq k-1$.
        Then, the following bound holds for all $u\in\dom h$
        \begin{itemize}
            \item[{\rm c)}] $\phi^\alpha(u) + \psi^\alpha(s_k)\leq \phi^\alpha(u)-\min_{x\in\R^n}\Gamma_k(x)$.
        \end{itemize}
    \end{lemma}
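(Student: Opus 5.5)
The natural route is to first unroll the recursion in Definition~\ref{def:acp_model} into a single convex combination. Set $\lambda_{-1}:=\prod_{j=0}^{k-1}(1-\zeta_j)$ for the weight on $y_0$ and $\lambda_i:=\zeta_i\prod_{j=i+1}^{k-1}(1-\zeta_j)$ for the weight on $x_i$. These weights are nonnegative and sum to one, which follows by induction on $k$ since each step of the recursion is a convex combination. We then have
\begin{equation}
\Gamma_k(x)=h^\alpha(x)+\lambda_{-1}\ell_f(x;y_0)+\sum_{i=0}^{k-1}\lambda_i\ell_f(x;x_i).
\end{equation}
The same induction applied to \eqref{def:sk1} gives $s_k=\lambda_{-1}\nabla f(y_0)+\sum_i\lambda_i\nabla f(x_i)$. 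Hence $\Gamma_k(x)=h^\alpha(x)+\inner{s_k}{x}+c_k$, where $c_k$ is the matching convex combination of the constants $f(x_i)-\inner{\nabla f(x_i)}{x_i}$. I would state this affine-plus-$h^\alpha$ representation as the key structural fact, since all three items reduce to it.

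Parts a) and b) are then immediate. For a), convexity of $f$ gives $\ell_f(x;x_i)\le f(x)$ for every $i$, and the same holds for $y_0$. Since the weights form a convex combination, $\Gamma_k(x)\le h^\alpha(x)+f(x)=\phi^\alpha(x)$. For b), $\Gamma_k$ is $h^\alpha=h+\alpha w$ plus an affine function. Here $h$ is convex and $w$ is $1$-strongly convex on $\dom h$, so $h^\alpha$ is $\alpha$-strongly convex, and adding an affine term does not change the Bregman divergence.

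For c), it suffices to show $-\min_x\Gamma_k(x)\ge\psi^\alpha(s_k)=(h^\alpha)^*(-s_k)+f^*(s_k)$. First, by the representation,
\begin{equation}
\min_x\Gamma_k(x)=-\max_x\{\inner{-s_k}{x}-h^\alpha(x)\}+c_k=-(h^\alpha)^*(-s_k)+c_k.
\end{equation}
It therefore remains to prove $-c_k\ge f^*(s_k)$. For each point $z\in\{y_0,x_0,\dots,x_{k-1}\}$, the Fenchel--Young equality gives $f^*(\nabla f(z))=\inner{\nabla f(z)}{z}-f(z)$. Thus $-c_k$ is exactly the convex combination $\lambda_{-1}f^*(\nabla f(y_0))+\sum_i\lambda_i f^*(\nabla f(x_i))$. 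By convexity of $f^*$ (Jensen), this combination is at least $f^*(s_k)$. Combining the two displays gives $\psi^\alpha(s_k)\le-\min\Gamma_k$, and adding $\phi^\alpha(u)$ to both sides yields c).

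The only step needing care is the bookkeeping that unrolls both recursions with identical weights and the same $c_k$; the Fenchel--Young/Jensen step is standard. One minor point to check is that $\min_x\Gamma_k$ is attained and finite. This holds because $\Gamma_k$ is strongly convex and closed, and $\dom h$ is bounded.
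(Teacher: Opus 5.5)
Your proposal is correct and follows essentially the paper's argument: write $\Gamma_k=h^\alpha+\inner{s_k}{\cdot}+c_k$, identify $\min_x\Gamma_k(x)=-(h^\alpha)^*(-s_k)+c_k$, and bound $-c_k\geq f^*(s_k)$ via the Fenchel--Young equality at each linearization point plus convexity of $f^*$. The differences are cosmetic. You unroll the weights explicitly where the paper uses the recursive scalars $\chi_i$, and you read the minimum directly off the definition of the conjugate where the paper uses the optimality condition $-s_k\in\partial h^\alpha(v_k)$; your route also makes the attainment check unnecessary.
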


    Motivated by Lemma~\ref{lem:model_props}(c), we define a computable primal-dual optimality certificate. We define the certificate from the primal perspective, however Lemma~\ref{lem:model_props_dual}(c) shows that we can equivalently define the certificate from the dual perspective.
    \begin{definition}[Primal-Dual Certificate]\label{def:pd_cert}
       Let $u\in \dom h$ and suppose $\alpha\leq\varepsilon/(2M)$. We say $(u,\Gamma_k)$ is an $\varepsilon$-certificate for~\eqref{def:probIntro} if
        \begin{equation}\label{ineq:pd_cert_def}
            \phi^\alpha(u)-\min_{x\in\R^n}\Gamma_k(x)\leq \frac{\varepsilon}2,
        \end{equation}
    where $\Gamma_k$ is an ACP model induced by $(y_0,\{x_i\}_{i=0}^{k-1}, \zeta)$. 
    \end{definition}

    By Lemmas~\ref{lem:reg_connection} and~\ref{lem:model_props}(c), an $\varepsilon$-certificate $(u,\Gamma_k)$ implies that $u$ is an $\varepsilon$-solution to~\eqref{def:probIntro}. Optimality certificates based on the minimum of aggregated linearizations have been used repeatedly in convex optimization~\cite{nemirovskiAccuracyCertificatesComputational2010a,rodomanovSubgradientEllipsoidMethod2023,grimmerPrimaldualTheorySubgradient2025}, known as ``accuracy certificates." Our definition of a certificate aligns with these works, unifying recent literature on proximal bundle methods~\cite{liang2024unified,liang2024single,liangPrimaldualProximalBundle2025a} and classical accuracy certificates.

    Throughout the work, we provide additional motivation and intuition for the primal-dual correspondences by adopting a game-theoretic lens. Specifically, we consider an entropically smoothed, zero-sum matrix game
\begin{equation}\label{def:bilinear_game}
    \min_{x\in\R^n}\max_{z\in\R^n}\left\{\inner{x}{Az} + \delta_{\Delta^n}(x)-\delta_{\Delta^n}(z)+\alpha H(x)-L^{-1} H(z)\right\},
\end{equation}
where $H(x)=\inner{x}{\log x}$ is the negative entropy function and we assume that the payoff matrix $A\in\R^{n\times n}$ has unit matrix norm for convenience. Computing the primal $\phi^\alpha$ and dual $d^\alpha$ functions\footnote{See~\cite[Section 4.4.10 and Theorem 4.14]{Beck2017}} gives
\begin{align}
   \phi^\alpha(x)&= \overbrace{L^{-1}\log\left(\sum_{i=1}^n\exp[LA_{:,i}^\top x]\right)}^{f(A^\top x)} 
     + \overbrace{\delta_{\Delta^n}(x)}^{h(x)}+\alpha\overbrace{H(x)}^{w(x)} \label{def:game_primal},\\
    d^\alpha(z)&= -\alpha\log\left(\sum_{j=1}^n\exp[-\alpha^{-1}A_{j,:}^\top z]\right) -L^{-1}H(z)- \delta_{\Delta^n}(z),\label{def:game_dual}
\end{align}
where $A_{:,i}$ and $A_{j,:}$ represent the $i^{\rm th}$ column and $j^{\rm th}$ row of $A$, respectively. Defining $\psi^\alpha$ as in~\eqref{eq:fenchel_rockafellar_dual} and applying standard conjugate calculus, we have \begin{equation}\label{def:game_dual_fenchel}
d^\alpha(z)=-(h^\alpha)^*(-Az)-f^*(z)=-\psi^\alpha(z),
\end{equation} 
where $f$ and $h^\alpha$ are as in \eqref{def:game_primal}.
 
In this setting, we denote $x\in\Delta^n$ and $z\in\Delta^n$ as mixed strategies of the minimizing and maximizing players, respectively, while gradients $\nabla_x f(A^\top x)$ and $\nabla_z (h^\alpha)^*(-Az)$ are predicted opponent responses to the primal and dual players, respectively. Since the primal~\eqref{def:game_primal} and dual~\eqref{def:game_dual} functions are negative Fenchel dual to each other by~\eqref{def:game_dual_fenchel}, the prediction exactly accords with reality. For simplicity, we henceforth use the terms ``player prediction'' and ``opponent response'' interchangeably without reiterating their equivalence. Moreover, we can show that the functions~\eqref{def:game_primal} and~\eqref{def:game_dual} satisfy the assumptions of~\eqref{def:probIntro_reg} and~\eqref{eq:fenchel_rockafellar_dual}, so all convergence analysis for the general CSCO problem~\eqref{def:probIntro} can be translated to game setting.
    
    \section{One-Average Methods}\label{sec:one_avg}
    
    We begin with a simple baseline consisting of one GLMO and one averaging step. When the averaging step is performed over gradients, we obtain the DA~\cite{nesterov2009primal} variant shown in Algorithm~\ref{alg:PDCP}. 

    \vspace{-.2in}
    
    \begin{minipage}[t]{0.44\textwidth}
\begin{algorithm}[H]\caption{MDA}\label{alg:PDCP}
        \begin{algorithmic}
        \REQUIRE given $y_0\in\dom h$, $\alpha>0$, set $\eta=\alpha/(L+\alpha)$, $s_0 = \nabla f(y_0)$, and compute
        \begin{equation*}
            x_0 = \underset{x\in\R^n}\argmin\left\{\inner{s_{0}}{x} + h^\alpha(x)\right\}.
        \end{equation*}
        \vspace{-2em}
            \FOR{$k\geq 0$}
                \STATE Compute
                \begin{equation*}
                x_{k+1}=\underset{x\in\R^n}\argmin\left\{\inner{s_{k+1}}{x} + h^\alpha(x)\right\},
                \end{equation*}
                where $
                    s_{k+1}=(1-\eta)s_k+\eta\nabla f(x_k)$.
            \ENDFOR
        \end{algorithmic}
    \end{algorithm}
\end{minipage}
\hfill
\begin{minipage}[t]{0.56\textwidth}
    \begin{algorithm}[H]\caption{GCG}\label{alg:GCG}
        \begin{algorithmic}
            \REQUIRE $z_0\in\dom f^*$, set $\eta=\alpha/(L+\alpha)$.
            \FOR{$k\geq 0$}
                \STATE  Compute
                \vspace{-1em}
                \begin{align}
                   &\bar{z}_{k}=\underset{v\in\R^n}\argmin\left\{\inner{-\nabla (h^\alpha)^*(-z_k)}{v}+f^*(v)\right\},\label{def:barz_gcg}\\
                    &z_{k+1}=\eta\bar z_{k} + (1-\eta)z_{k}. \label{def:z+}
                \end{align}
            \ENDFOR
        \end{algorithmic}
    \end{algorithm}
\end{minipage}

    \vspace{.2in}
    
   Adopting the perspective of the dual problem~\eqref{eq:fenchel_rockafellar_dual}, we can instead employ GCG as shown in Algorithm~\ref{alg:GCG}. Here, the ``primal'' variables $z_{k+1}$ are averaged while a single ``gradient'' is used to compute $\bar z_k$. Comparing the one-average methods, we obtain the following equivalence result. A similar correspondence was obtained in~\cite{liangPrimaldualProximalBundle2025a} in the case $w(\cdot)=\|\cdot-x_0\|^2/2$ for some $x_0$ in the context of proximal bundle methods. The proof is deferred to Appendix~\ref{appdx:one_avg}.
    \begin{proposition}\label{prop:1avg_duality}
        Fix $y_0\in\dom h$ and set $z_0=\nabla f(y_0)$. Then, on every iteration $k\geq0$ of Algorithm~\ref{alg:PDCP} and Algorithm~\ref{alg:GCG}, we have the following correspondence
        \begin{equation}
             \nabla f(x_k)=\bar{z}_{k},\quad x_{k}=\nabla (h^\alpha)^*(-z_k),\quad s_{k}=z_k.
        \end{equation}
    \end{proposition}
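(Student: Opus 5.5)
The plan is to argue by induction on $k$, using two facts from conjugate duality. Since $h^\alpha$ is $\alpha$-strongly convex, $(h^\alpha)^*$ is differentiable and
\begin{equation}
\nabla (h^\alpha)^*(-s)=\argmin_{x\in\R^n}\{\inner{s}{x}+h^\alpha(x)\}.
\end{equation}
Since $f$ is closed, proper, convex and differentiable, the optimality condition of \eqref{def:barz_gcg} is
\begin{equation}
x_k=\nabla(h^\alpha)^*(-z_k)\in\partial f^*(\bar z_k),
\end{equation}
which is equivalent to $\bar z_k=\nabla f(x_k)$. Here I use $\partial f^*=(\partial f)^{-1}$ for closed proper convex $f$. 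The minimizer of \eqref{def:barz_gcg} is unique because $f^*$ is strongly convex.

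The induction hypothesis at step $k$ is $s_k=z_k$. Given this, the first fact applied to the definition of $x_k$ in Algorithm~\ref{alg:PDCP} gives $x_k=\nabla(h^\alpha)^*(-s_k)=\nabla(h^\alpha)^*(-z_k)$. This is the second identity. The second fact then gives $\bar z_k=\nabla f(x_k)$, which is the first identity. Finally, comparing updates,
\begin{equation}
s_{k+1}=(1-\eta)s_k+\eta\nabla f(x_k)=(1-\eta)z_k+\eta\bar z_k=z_{k+1}
\end{equation}
by \eqref{def:z+}. This closes the induction, since both algorithms use the same $\eta=\alpha/(L+\alpha)$.

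The base case is $s_0=\nabla f(y_0)=z_0$ by the choice of $z_0$. The definition of $x_0$ in Algorithm~\ref{alg:PDCP} uses the same GLMO with $s_0$, so the argument above applies verbatim at $k=0$. One point to check is that $z_0=\nabla f(y_0)\in\dom f^*$, which holds because a gradient lies in the range of $\partial f$ and hence in $\dom\partial f^*$.

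The main step needing care is the identification $\bar z_k=\nabla f(x_k)$. It requires the inversion $\partial f^*=(\partial f)^{-1}$ together with uniqueness of both argmins: strong convexity of $h^\alpha$ for the GLMO, and strong convexity of $f^*$, coming from $L$-smoothness of $f$, for \eqref{def:barz_gcg}. Without uniqueness, ``the'' iterates would not be well defined and the correspondence could fail on tie-breaking. Everything else is direct substitution.
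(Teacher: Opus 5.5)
Your proof is correct and follows the paper's route: induction on $s_k=z_k$ with base case $z_0=\nabla f(y_0)=s_0$, identifying $x_k=\nabla(h^\alpha)^*(-z_k)$ through the conjugate characterization of the GLMO and $\bar z_k=\nabla f(x_k)$ through $\partial f^*=(\partial f)^{-1}$, then matching the averaging updates. The only cosmetic difference is that the paper reads $\bar z_k$ off directly as $\Argmax_z\{\inner{x_k}{z}-f^*(z)\}=\partial f(x_k)=\{\nabla f(x_k)\}$, which gives existence and uniqueness at once and makes your separate strong-convexity argument for $f^*$ unnecessary.
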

    
    In the context of the zero-sum game~\eqref{def:bilinear_game}, we can better understand the correspondence between Algorithms~\ref{alg:PDCP} and~\ref{alg:GCG} as a sequential game dynamic, summarized in the following graph
    \[
    \begin{tikzpicture}
        \node (A) [anchor=center] at ($(0,-1mm)$) {$y_0$};
        \node (B) [anchor=center] at ($(1.7,-1mm)$) {$(s_0 = z_0)$};
        \node (C) [anchor=center] at (5,0) {$(x_k\stackrel{\text{\textbf{(a)}}}=\nabla (h^\alpha)^*(-z_k))$};
        \node (D) [anchor=center] at (8.7,0) {$(\nabla f(x_k)\stackrel{\text{\textbf{(b)}}}=\bar{z}_{k})$};
        \node (E) [anchor=center] at (11.9,0) {$(s_{k+1} \stackrel{\text{\textbf{(c)}}}= z_{k+1}).$};

        \draw[arrows = {->[]},semithick] (A.east) -- ($(B.west)$);
        
        \draw[arrows = {->[]},semithick] ($(B.east)$) -- ($(C.west)+(0,-1mm)$);
        
        \draw[arrows = {->[]},semithick] ($(C.east)+(0,-1mm)$) -- ($(D.west)+(0,-1mm)$);
        \draw[semithick, ->] ($(D.east)+(0,-1mm)$) -- ($(E.west)+(0,-1mm)$);

  \draw[semithick, ->] (E.south) -- ++(0,-0.23) -| (C.south);
    \end{tikzpicture}
    \]
    In this game, the average iterates $\{z_k\}$ are mixed strategies, while $\{x_k\}$, $\{\bar z_k\}$ are responses to opponent moves.
    
    To start the game, the primal player moves first by selecting strategy $y_0$, and then predicts the dual player's response $s_0$ simultaneously with the dual player's actual response $z_0$. We recall that, by~\eqref{def:game_dual_fenchel}, each player's prediction exactly aligns with reality (i.e., the opponent's actual response) due to conjugate symmetry of the problem.

    With the game initialized, each round occurs the same way. First, in \textbf{(a)} the dual player predicts the primal response to their mixed strategy $(x_k=\nabla (h^\alpha)^*(-z_k))$. Next, in \textbf{(b)} the primal player predicts the dual response to their previous move ($\nabla f(x_k)=\bar z_k$). Finally, in \textbf{(c)} the mixed strategy $(s_{k+1}=z_{k+1})$ is updated using the last dual response and the next round begins. Each player therefore employs simple ``regularized best response + averaging'' strategies, where the regularized best response is computed via the GLMO and the primary difference is the role of the averaging step. The primal player uses averaging to smooth opponent responses, the dual player uses averaging to compute their own mixed strategy.

    With the correspondence established in Proposition~\ref{prop:1avg_duality}, we state the following certificate complexities for each 1-average method. Full proofs are deferred to Appendix~\ref{appdx:one_avg}. 
    \begin{theorem}\label{thm:da_convergence}
        For $y_0\in\dom h$, for all $k\geq 0$ define 
        $y_{k+1} = (1-\eta)y_k + \eta x_{k+1}$.
        Given $\varepsilon>0$, choosing $\alpha=\varepsilon/(2M)$, the pair $(y_k,\Gamma_k)$ is an $\varepsilon$-certificate for~\eqref{def:probIntro} 
        in $k=\tO(1+ML\varepsilon^{-1})$ iterations of Algorithm~\ref{alg:PDCP}, where $\Gamma_k$ is the ACP model for~\eqref{def:probIntro_reg} induced by $(y_0,\{x_{i}\}_{i=0}^{k-1}, \{\eta\}^k)$.
    \end{theorem}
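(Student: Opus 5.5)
The plan is to set up a potential-type recursion for the gap $\Delta_k := \phi^\alpha(y_k) - \min_x \Gamma_k(x)$ and show that it contracts geometrically with factor $(1-\eta)$, where $\eta=\alpha/(L+\alpha)$. By Definition~\ref{def:pd_cert}, $(y_k,\Gamma_k)$ is then an $\varepsilon$-certificate as soon as $\Delta_k\le\varepsilon/2$. With $\alpha=\varepsilon/(2M)$ we have $\eta^{-1} = 1+2ML/\varepsilon$, so $k = \mathcal{O}(\eta^{-1}\log(\Delta_0/\varepsilon)) = \tO(1+ML\varepsilon^{-1})$, which is the claimed bound.

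\textbf{Step 1: identify the model minimizer.} By definition of the ACP model, $\Gamma_k(x) = h^\alpha(x) + \inner{s_k}{x} + c_k$ for a constant $c_k$, where $s_k$ is exactly the MDA aggregate. This uses the fact that $s_{j+1}=(1-\eta)s_j+\eta\nabla f(x_j)$ matches~\eqref{def:sk1} with $\zeta_j=\eta$. Consequently $x_k=\argmin \Gamma_k$, and we also have the recursion $\Gamma_{k+1} = (1-\eta)\Gamma_k + \eta\,(h^\alpha + \ell_f(\cdot;x_k))$. By Lemma~\ref{lem:model_props}(b), $\Gamma_k$ is $\alpha$-strongly convex, so $\Gamma_k(x) \ge \Gamma_k(x_k) + \tfrac{\alpha}{2}\|x-x_k\|^2$.

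\textbf{Step 2: one-step lower bound on the model.} Evaluate at $x_{k+1}$:
\[
\min\Gamma_{k+1} = (1-\eta)\Gamma_k(x_{k+1}) + \eta\,[h^\alpha(x_{k+1}) + \ell_f(x_{k+1};x_k)]
\ge (1-\eta)\Big[\Gamma_k(x_k) + \tfrac{\alpha}{2}\|x_{k+1}-x_k\|^2\Big] + \eta\,[h^\alpha(x_{k+1}) + \ell_f(x_{k+1};x_k)].
\]

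\textbf{Step 3: one-step upper bound on the primal value.} Since $y_{k+1}=(1-\eta)y_k+\eta x_{k+1}$, convexity of $h^\alpha$ gives $h^\alpha(y_{k+1}) \le (1-\eta)h^\alpha(y_k) + \eta h^\alpha(x_{k+1})$. For $f$, I would linearize around the point $(1-\eta)y_k+\eta x_k$ and use $L$-smoothness together with convexity of $f$. Combining these should yield
\[
\phi^\alpha(y_{k+1}) \le (1-\eta)\phi^\alpha(y_k) + \eta\,[h^\alpha(x_{k+1}) + \ell_f(x_{k+1};x_k)] + \tfrac{L\eta^2}{2}\|x_{k+1}-x_k\|^2 + (\text{cross terms}).
\]
Subtracting the bound from Step 2, the quadratic terms cancel when $L\eta^2 \le (1-\eta)\alpha$, which is exactly the condition $\eta=\alpha/(L+\alpha)$ (indeed $L\eta^2 = \eta\cdot \tfrac{L\alpha}{L+\alpha} = (1-\eta)\alpha\eta \le (1-\eta)\alpha$). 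This would give $\Delta_{k+1}\le(1-\eta)\Delta_k$.

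\textbf{Main obstacle.} The difficulty is in Step 3: matching the linearization point $x_k$ of the new cut with the convex-combination point $y_{k+1}$. The mismatch means $\ell_f(\cdot;x_k)$ is not the linearization of $f$ at the natural point. To handle this, I would first try the potential $\Delta_k$ as defined, using $f(y_{k+1}) \le (1-\eta) f(y_k) + \eta f(x_{k+1})$, and then bound $f(x_{k+1}) \le \ell_f(x_{k+1};x_k) + \tfrac{L}{2}\|x_{k+1}-x_k\|^2$. This produces a quadratic term $\eta L/2$ rather than $\eta^2 L/2$. That weaker bound still suffices for $\tO(1/\varepsilon)$ if one instead shows $\Delta_{k+1}\le(1-\eta)\Delta_k + \mathcal{O}(\eta^2 L D^2)$ with $D^2 = \mathcal{O}(M/\alpha)$ bounded via strong convexity and boundedness of $w$. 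Unrolling this recursion gives a residual of order $\eta L M/\alpha = \mathcal{O}(ML/(L+\alpha))$, which is too large. So the fallback is to choose $u$ carefully: I would take $u=x_k$-based iterates or use $y_k$ with the tighter argument, and verify the cancellation against $\tfrac{\alpha(1-\eta)}{2}\|x_{k+1}-x_k\|^2$ directly.

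\textbf{Initial gap.} Finally, bound $\Delta_0 = \phi^\alpha(y_0) - \min\Gamma_0$ by a quantity that enters only logarithmically, using smoothness, strong convexity, and the boundedness of $w$ on $\dom h$. Together with the contraction from Step 3, this completes the argument.
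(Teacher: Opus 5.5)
Your Steps 1 and 2 match the paper's argument, but Step 3 is never established. Step 3 is the contraction $\Delta_{k+1}\le(1-\eta)\Delta_k$, which is the whole content of the theorem.

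\textbf{The route you present first does not fit MDA.} You propose a Taylor bound for $f$ at $(1-\eta)y_k+\eta x_k$ plus unspecified ``cross terms''. The new cut in $\Gamma_{k+1}$ is $\ell_f(\cdot;x_k)$, so expanding $f$ at any other point $\tilde x$ leaves a mismatch term such as $\eta\inner{\nabla f(\tilde x)-\nabla f(x_k)}{x_{k+1}-x_k}$. That term can be as large as $\eta(1-\eta)L\|y_k-x_k\|\,\|x_{k+1}-x_k\|$, and you give no way to absorb it into the quadratic term. The similar-triangles device you are reaching for is what the paper uses for TAA, where the cut is taken at $\tilde x_{k+1}$ itself. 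Your stated condition is also misidentified: $L\eta^2\le(1-\eta)\alpha$ is not ``exactly'' the choice $\eta=\alpha/(L+\alpha)$; under that choice it holds with a spare factor of $\eta$.

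\textbf{The correct argument is the one you discard under ``main obstacle''.} Convexity along $y_{k+1}=(1-\eta)y_k+\eta x_{k+1}$ gives $\phi^\alpha(y_{k+1})\le(1-\eta)\phi^\alpha(y_k)+\eta\phi^\alpha(x_{k+1})$. Smoothness at the cut point gives $\phi^\alpha(x_{k+1})\le h^\alpha(x_{k+1})+\ell_f(x_{k+1};x_k)+\frac L2\|x_{k+1}-x_k\|^2$.

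The resulting term $\frac{\eta L}{2}\|x_{k+1}-x_k\|^2$ is not too weak. For $\eta=\alpha/(L+\alpha)$ one has $\eta L=\frac{L\alpha}{L+\alpha}=(1-\eta)\alpha$ exactly; you essentially wrote this identity in Step 3. So the term cancels the $\frac{(1-\eta)\alpha}{2}\|x_{k+1}-x_k\|^2$ from your Step 2 identically, and $\Delta_{k+1}\le(1-\eta)\Delta_k$ follows with no additive error.

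This is precisely the paper's proof. It writes Step 2 as $m_{k+1}\ge(1-\eta)m_k+\eta\bigl[\gamma_k(x_{k+1})+\frac L2\|x_k-x_{k+1}\|^2\bigr]\ge(1-\eta)m_k+\eta\phi^\alpha(x_{k+1})$, then uses convexity of $\phi^\alpha$ along the $y$-recursion. Your $\mathcal{O}(\eta^2LD^2)$ error recursion and the ``choose $u$ carefully'' fallback are unnecessary; as you yourself observe, the former fails anyway.

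Once you supply that single identity, the rest of your plan goes through. That covers $x_k=\argmin\Gamma_k$, the $\alpha$-strong convexity of $\Gamma_k$, and $k=\mathcal{O}(\eta^{-1}\log(\Delta_0/\varepsilon))$ with $\eta^{-1}=1+2ML/\varepsilon$. Since $\Delta_0$ enters only logarithmically, it just needs to be finite.
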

    \noindent\textbf{Proof Sketch}. The majority of the proof of Theorem~\ref{thm:da_convergence} is to show the inequality
    \[\Gamma_{k+1}(x_{k+1})\geq (1-\eta)\Gamma_k(x_k) + \eta\phi^\alpha(x_{k+1}),\]
    which utilizes the optimality condition of $x_{k+1}$, the strong convexity of $w$, and the smoothness/convexity of $\phi^\alpha(\cdot)$. Recursively expanding and applying the definition of $y_k$ along with the convexity of $\phi^\alpha(\cdot)$ gives a linear convergence rate in the model gap with contraction factor $(1+\alpha/L)^{-1}$. Our choice of $\alpha$ and standard analysis then gives the complexity.

    \begin{theorem}\label{thm:CG-cmplx}
        Given $\varepsilon>0$, choosing $\alpha=\varepsilon/(2M)$, the pair $(z_k,\Gamma_k^*)$ is an $\varepsilon$-certificate for~\eqref{def:probIntro} 
        in $k=\tO(1+ML\varepsilon^{-1})$ iterations of Algorithm~\ref{alg:GCG}, where $\Gamma_k^*$ is the SCP model for~\eqref{eq:fenchel_rockafellar_dual} induced by $z_k$.
    \end{theorem}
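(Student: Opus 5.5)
Via Proposition~\ref{prop:1avg_duality}, GCG and MDA started from $z_0=\nabla f(y_0)$ generate identical sequences: $z_k=s_k$, $\bar z_k=\nabla f(x_k)$, $x_k=\nabla(h^\alpha)^*(-z_k)$. So the plan is to turn the dual certificate $(z_k,\Gamma_k^*)$ into a primal statement and reuse the contraction already proved for Theorem~\ref{thm:da_convergence}.

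\textbf{Step 1: the dual SCP model.} By the dual analogue of Definition~\ref{def:acp_model} (as referenced via Lemma~\ref{lem:model_props_dual}), the SCP model induced by $z_k$ is
\[
\Gamma_k^*(v)=f^*(v)+\ell_{(h^\alpha)^*(-\cdot)}(v;z_k)=f^*(v)+(h^\alpha)^*(-z_k)-\inner{x_k}{v-z_k},
\]
using $\nabla_z (h^\alpha)^*(-z)|_{z_k}=-x_k$. By conjugacy, $(h^\alpha)^*(-z_k)+\inner{x_k}{z_k}=-h^\alpha(x_k)$, since $x_k$ maximizes $\inner{-z_k}{x}-h^\alpha(x)$. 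Therefore
\[
\min_v\Gamma_k^*(v)=-h^\alpha(x_k)-\max_v\{\inner{x_k}{v}-f^*(v)\}=-h^\alpha(x_k)-f(x_k)=-\phi^\alpha(x_k).
\]
The certificate gap then becomes
\[
\psi^\alpha(z_k)-\min\Gamma_k^*=\psi^\alpha(s_k)+\phi^\alpha(x_k),
\]
an ordinary primal-dual gap at the pair $(x_k,s_k)$. (Depending on the exact dual definition, I would confirm that this gap is what Lemma~\ref{lem:model_props_dual}(c) and the dual form of Definition~\ref{def:pd_cert} require to be at most $\varepsilon/2$.)

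\textbf{Step 2: bound the gap with the primal model.} By Lemma~\ref{lem:model_props}(c), with $\Gamma_k$ the ACP model induced by $(y_0,\{x_i\},\{\eta\}^k)$,
\[
\psi^\alpha(s_k)+\phi^\alpha(x_k)\le\phi^\alpha(x_k)-\min\Gamma_k.
\]
Since $x_k$ minimizes $\inner{s_k}{x}+h^\alpha(x)$ and $\Gamma_k=h^\alpha+\inner{s_k}{\cdot}+\text{const}$, we have $\min\Gamma_k=\Gamma_k(x_k)$. It therefore suffices to control $\phi^\alpha(x_k)-\Gamma_k(x_k)$. The key inequality from the proof of Theorem~\ref{thm:da_convergence} is
\[
\Gamma_{k+1}(x_{k+1})\ge(1-\eta)\Gamma_k(x_k)+\eta\phi^\alpha(x_{k+1}).
\]
This gives $\Gamma_k(x_k)$ approaching $\phi^\alpha_*$ linearly, but it controls the averaged iterate $y_k$ rather than the last iterate $x_k$.

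\textbf{Step 3: the main obstacle, last-iterate control.} I would derive a last-iterate bound directly. Write $\phi^\alpha(x_{k+1})-\Gamma_{k+1}(x_{k+1})$ and use smoothness together with the $\alpha$-strong convexity of $\Gamma_{k+1}$ (Lemma~\ref{lem:model_props}(b)). By optimality of $x_k$ for $\Gamma_k$ and of $x_{k+1}$ for $\Gamma_{k+1}$,
\[
\Gamma_{k+1}(x_{k+1})\ge(1-\eta)\Big[\Gamma_k(x_k)+\tfrac\alpha2\|x_{k+1}-x_k\|^2\Big]+\eta\,\ell_f(x_{k+1};x_k)+\eta h^\alpha(x_{k+1}).
\]
Setting $\delta_k:=\phi^\alpha(x_k)-\Gamma_k(x_k)$, I would then bound
\[
\phi^\alpha(x_{k+1})-\Gamma_{k+1}(x_{k+1})\le(1-\eta)\Big[\phi^\alpha(x_{k+1})-\Gamma_k(x_k)-\tfrac\alpha2\|x_{k+1}-x_k\|^2\Big]+\tfrac{\eta L}{2}\|x_{k+1}-x_k\|^2,
\]
and handle $\phi^\alpha(x_{k+1})-\phi^\alpha(x_k)$ using smoothness of $f$ plus the linearization term. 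With $\eta=\alpha/(L+\alpha)$ the quadratic terms cancel, since $(1-\eta)\alpha=\eta L$. I hope this yields $\delta_{k+1}\le(1-\eta)\delta_k$.

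If this direct route fails, the fallback is to show that $\phi^\alpha(x_k)-\min\Gamma_k$ is bounded by a constant multiple of the model gap of Theorem~\ref{thm:da_convergence}, possibly at the cost of a factor $L/\alpha$ that is absorbed logarithmically.

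\textbf{Step 4: complexity.} Once $\delta_k\le(1+\alpha/L)^{-k}\delta_0$ holds, with $\delta_0\le LD^2$-type bounded by $\mathcal O(LM/\alpha)$ via strong convexity and boundedness, choosing $\alpha=\varepsilon/(2M)$ makes the gap at most $\varepsilon/2$ after $k=\mathcal O((1+L/\alpha)\log(\cdot))=\tO(1+ML\varepsilon^{-1})$ iterations.
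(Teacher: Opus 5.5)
Your Steps 1 and 2 are correct, and they set up a route that differs from the paper's. Step 1 reproduces Lemmas~\ref{lem:wolfe_gap_pd} and~\ref{lem:wolfe_gap_to_acp}: the certificate gap equals the Wolfe gap $S(z_k)=\psi^\alpha(z_k)+\phi^\alpha(x_k)$. Step 2 bounds this by the MDA model gap at the \emph{last} iterate, $\delta_k=\phi^\alpha(x_k)-m_k$ with $m_k=\Gamma_k(x_k)$. The gap is in Step 3. Once the quadratic terms cancel, what you have is exactly inequality (iii) from the proof of Theorem~\ref{thm:da_convergence}, $m_{k+1}\ge(1-\eta)m_k+\eta\phi^\alpha(x_{k+1})$. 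This only yields
\[
\delta_{k+1}\le(1-\eta)\bigl[\delta_k+\phi^\alpha(x_{k+1})-\phi^\alpha(x_k)\bigr].
\]
Nothing you have written controls the sign of $\phi^\alpha(x_{k+1})-\phi^\alpha(x_k)$. The smoothness of $f$ and the $\frac{\alpha}{2}\|x_{k+1}-x_k\|^2$ term are used up exactly by the cancellation $(1-\eta)\alpha=\eta L$, so ``smoothness plus the linearization term'' has no slack left to absorb it. The contraction $\delta_{k+1}\le(1-\eta)\delta_k$ is therefore left as a hope, and the fallback is not specified either.

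The missing idea is that $\phi^\alpha$ is monotone along the best-response sequence. This follows from the optimality of $x_{k+1}$ for $\Gamma_{k+1}$, tested at the competitor $x_k$; you mention this optimality but never use it. Since $\gamma_k(x_k)=\phi^\alpha(x_k)$ (with $\gamma_k=\ell_f(\cdot;x_k)+h^\alpha$ as in the MDA proof), we get $m_{k+1}\le\Gamma_{k+1}(x_k)=(1-\eta)m_k+\eta\phi^\alpha(x_k)$. Comparing with (iii) gives $\phi^\alpha(x_{k+1})\le\phi^\alpha(x_k)$. Equivalently, MDA is the majorize--minimize step $x_{k+1}=\argmin\{\gamma_k(x)+\frac{L}{\alpha}\D_{h^\alpha}(x\|x_k)\}$, taking the subgradient $-s_k$ at $x_k$; its objective dominates $\phi^\alpha$ because $\D_{h^\alpha}(x\|x_k)\ge\frac{\alpha}{2}\|x-x_k\|^2$. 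With this, $\delta_{k+1}\le(1-\eta)\delta_k$, and so $S(z_k)\le(1+\alpha/L)^{-k}\D_f(x_0\|y_0)$, where $\D_f(x_0\|y_0)\le\frac{L}{2}\|x_0-y_0\|^2=\mathcal{O}(LM)$.

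Completed this way, your argument is a last-iterate primal analysis with no $1/\eta$ prefactor. The paper never needs primal monotonicity. It combines the GCG descent inequality $\psi^\alpha(z_{k+1})\le\psi^\alpha(z_k)-\eta S(z_k)$ (Lemma~\ref{lem:tau_cancelling_gcg}) with $\psi^\alpha(z_{k+1})\ge\psi^\alpha_*$ to get $S(z_k)\le\eta^{-1}(\psi^\alpha(z_k)-\psi^\alpha_*)$. It then bounds the dual suboptimality through the averaged primal iterate $\tilde y_k$ (Proposition~\ref{prop:CG-converge}). This is essentially your fallback, and it pays the factor $1/\eta=1+L/\alpha$ inside the logarithm.

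Finally, routing through Proposition~\ref{prop:1avg_duality} only covers $z_0=\nabla f(y_0)$ with $y_0\in\dom h$, whereas Algorithm~\ref{alg:GCG} allows any $z_0\in\dom f^*$. You can remove this restriction by initializing the primal model as $\Gamma_0=h^\alpha+\inner{z_0}{\cdot}-f^*(z_0)$. This is a minorant by Fenchel--Young, and the proof of Lemma~\ref{lem:model_props}(c) goes through verbatim for it.
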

\noindent\textbf{Proof Sketch}. The proof of Theorem~\ref{thm:CG-cmplx} is somewhat ``backwards'' compared to the proof of MDA, where the model gap $\phi^\alpha(y_k)-\min_{x\in\R^n}\Gamma_k(x)$ was directly bounded. First, we prove a convergence rate for the primal-dual gap $\phi^\alpha(\tilde y_k)+\psi^\alpha(z_k)$ in Proposition~\ref{prop:CG-converge} (where $\tilde{y}_k$ is defined in~\eqref{def:gcg_y}). We then use the primal-dual convergence to bound the single-cut model gap via a modification of standard results in GCG analysis (see Lemmas~\ref{lem:wolfe_gap_pd} and~\ref{lem:wolfe_gap_to_acp}).
    
    The convergence results for the one-average algorithm pair exhibit a notable asymmetry. While Theorems~\ref{thm:da_convergence} and~\ref{thm:CG-cmplx} show primal convergence in an average iterate ($y_k$ for the former, $\tilde y_k$ for the latter), this sequence never explicitly appears in either algorithm. Instead, gradient evaluations and updates occur using the ``regularized best response'' sequence $x_k=\nabla (h^\alpha)^*(-z_k)$. 

    In the next section, we consider a fully symmetric dynamic where gradients are evaluated at the true test sequence $\{y_k\}$.

    \section{A Two-Average Method}\label{sec:two_avg}
    In this section, we study a method which utilizes two averages, one in the primal and one in the dual, shown in Algorithm~\ref{alg:AggGCG_primal}, which has been previously studied in~\cite{zhao2023generalized}. Full proofs can be found in Appendix~\ref{appdx:two_avg}. Comparing to Algorithm~\ref{alg:PDCP}, there are two primary differences. First, gradients are evaluated at the smoothed sequence $\{y_k\}$ rather than the best-response sequence $\{x_k\}$. Since $\{y_k\}$ again serves as the primal test point sequence, as Theorem~\ref{thm:agg_GCG_conv} will show, Algorithm~\ref{alg:AggGCG_primal} addresses the asymmetry noted in the prior section. Second, the order of updates is ``lagged'' in comparison to Algorithm~\ref{alg:PDCP}. The dual average $s_k$, which includes $\nabla f(y_{k-1})$ instead of the available $\nabla f(y_k)$, is used to compute $x_{k+1}$. Two-average methods have also been proposed in the nonsmooth setting~\cite{nesterov2015quasi} which do not have this ``staggered'' scheme. However, the delayed updates of Algorithm~\ref{alg:AggGCG_primal} enable the use of simple smoothness inequalities in both primal and dual, as shown in Lemma~\ref{lem:gcg_per_step_duality}.

    \vspace{-.2in}
    
    \noindent
    \begin{minipage}[t]{0.5\textwidth}
        
    \begin{algorithm}[H]\caption{AggGCG (Primal)}\label{alg:AggGCG_primal}
        \begin{algorithmic}
            \REQUIRE $y_0\in \dom h$, $s_0\in\R^n$, $\eta = \alpha / (L+\alpha)$
            \FOR{$k\geq 0$}
                \STATE \textbf{1.} Compute
                \begin{align*}
                    x_{k+1}=\underset{x\in\R^n}\argmin\left\{\inner{s_{k}}{x} +  h^\alpha(x)\right\}.
                \end{align*}
                \STATE \textbf{2.} Compute
                \begin{align}
                    \label{def:y_agg_gcg}y_{k+1}&=(1-\eta)y_{k} + \eta x_{k+1},\\
                    \label{def:s_agg_gcg}s_{k+1}&=(1-\eta)s_{k} + \eta\nabla f(y_{k}).
                \end{align}
            \ENDFOR
        \end{algorithmic}
    \end{algorithm}
    \end{minipage}\hfill
    \begin{minipage}[t]{0.5\textwidth}
        
    \begin{algorithm}[H]\caption{AggGCG (Dual)}\label{alg:AggGCG_dual}
        \begin{algorithmic}
            \REQUIRE $z_0\in \dom f^*$, $v_{0}\in\R^n$, $\eta = \alpha / (L+\alpha)$
            \FOR{$k\geq 0$}
                \STATE \textbf{1.} Compute
                \begin{align}\label{def:bz_gcg_primal}
                    \bar{z}_{k+1}=\underset{z\in\R^n}\argmin\left\{-\inner{v_{k}}{z} + f^*(z)\right\}.
                \end{align}
                \STATE \textbf{2.} Compute
                \begin{align*}
                    z_{k+1}&=(1-\eta)z_{k} + \eta\bar{z}_{k+1},\\
                    v_{k+1}&=(1-\eta)v_{k} + \eta\nabla (h^\alpha)^*(-z_{k}).
                \end{align*}
            \ENDFOR
        \end{algorithmic}
    \end{algorithm}
    \end{minipage}

    \vspace{.2in}
    
    While it may not seem obvious at first glance, the update ordering makes Algorithm~\ref{alg:AggGCG_primal} \textit{self-dual}, that is, its dual algorithm utilizes exactly the same updates, as shown in Algorithm~\ref{alg:AggGCG_dual}.
    
    \begin{proposition}\label{prop:self_dual}
        Set $s_0=z_0$ and $v_0=y_0$. Then, on every iteration $k\geq 0$ of  Algorithm~\ref{alg:AggGCG_primal} and Algorithm~\ref{alg:AggGCG_dual}, we have the following correspondence
        \begin{equation}\label{corresp:two_avg}
            y_{k}=v_{k},\quad s_{k} = z_{k},\quad x_{k+1}=\nabla (h^\alpha)^*(-z_{k}),\quad \nabla f(y_{k})=\bar z_{k+1}.
        \end{equation}
    \end{proposition}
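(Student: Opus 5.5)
The plan is to argue by induction on $k$, proving all four identities in~\eqref{corresp:two_avg} at once. The only tools needed are the standard conjugate characterizations of the two argmin subproblems. For any $v$, the minimizer of $\inner{v}{x}+h^\alpha(x)$ is unique, since $h^\alpha$ is $\alpha$-strongly convex on $\dom h$. Its optimality condition $-v\in\partial h^\alpha(x)$ is equivalent to $x\in\partial (h^\alpha)^*(-v)$. Because $(h^\alpha)^*$ is differentiable, being $(1/\alpha)$-smooth, this gives
\[
\argmin_{x\in\R^n}\{\inner{v}{x}+h^\alpha(x)\}=\nabla (h^\alpha)^*(-v).
\]
Similarly, $f^*$ is $(1/L)$-strongly convex, so the minimizer of $-\inner{v}{z}+f^*(z)$ is unique and satisfies $v\in\partial f^*(z)$. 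Equivalently $z\in\partial f(v)=\{\nabla f(v)\}$, using $f=(f^*)^*$ and the differentiability of $f$. Hence
\[
\argmin_{z\in\R^n}\{-\inner{v}{z}+f^*(z)\}=\nabla f(v).
\]

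With these two facts, step 1 of Algorithm~\ref{alg:AggGCG_primal} reads $x_{k+1}=\nabla (h^\alpha)^*(-s_k)$, and step 1 of Algorithm~\ref{alg:AggGCG_dual} reads $\bar z_{k+1}=\nabla f(v_k)$.

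For the base case $k=0$, the initialization gives $y_0=v_0$ and $s_0=z_0$. Then $x_1=\nabla (h^\alpha)^*(-s_0)=\nabla (h^\alpha)^*(-z_0)$ and $\bar z_1=\nabla f(v_0)=\nabla f(y_0)$, as required.

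For the inductive step, assume $y_k=v_k$ and $s_k=z_k$, so that (by the argument above) $x_{k+1}=\nabla (h^\alpha)^*(-z_k)$ and $\bar z_{k+1}=\nabla f(y_k)$. Substituting these into step 2 of each algorithm gives
\[
y_{k+1}=(1-\eta)y_k+\eta x_{k+1}=(1-\eta)v_k+\eta\nabla (h^\alpha)^*(-z_k)=v_{k+1},
\]
\[
s_{k+1}=(1-\eta)s_k+\eta\nabla f(y_k)=(1-\eta)z_k+\eta\bar z_{k+1}=z_{k+1}.
\]
Applying the two argmin identities again at index $k+1$ yields $x_{k+2}=\nabla (h^\alpha)^*(-z_{k+1})$ and $\bar z_{k+2}=\nabla f(y_{k+1})$, which closes the induction.

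The algebra is routine once the updates are aligned. The only real care point is the justification of the two argmin identities: existence and uniqueness of the minimizers, and differentiability of $(h^\alpha)^*$ and $f$, so that the subdifferential inclusions become equalities with gradients. The needed facts come from the strong convexity of $w$ (and hence of $h^\alpha$) on the bounded set $\dom h$ and from the $L$-smoothness of $f$. A secondary check is that the iterates stay well defined: $y_k\in\dom h$ by convexity of $\dom h$, and $z_k\in\dom f^*$ by convexity of $\dom f^*$, since $\bar z_{k+1}=\nabla f(y_k)\in\dom f^*$. Both hold trivially here.
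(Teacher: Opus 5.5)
Your proposal is correct and follows the paper's proof essentially step for step: you induct on $(y_k,s_k)=(v_k,z_k)$, use the conjugate-subgradient characterizations (the paper's Lemma~\ref{lem:beck_subdiff_conj}) to turn the two argmin steps into $x_{k+1}=\nabla(h^\alpha)^*(-z_k)$ and $\bar z_{k+1}=\nabla f(y_k)$, and then match the averaging updates term by term. Your added remarks on existence and uniqueness of the minimizers and on the iterates staying in $\dom h$ and $\dom f^*$ are sound, and slightly more careful than the paper's write-up.
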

Since Algorithm~\ref{alg:AggGCG_primal} is self-dual, we only state convergence results for the primal variant for simplicity of presentation.
\begin{theorem}\label{thm:agg_GCG_conv} 
        Given $\varepsilon>0$, choosing $\alpha=\varepsilon/(2M)$, then Algorithm~\ref{alg:AggGCG_primal} computes a pair $(y_k,s_k)$ satisfying $\phi^\alpha(y_k)+\psi^\alpha(s_k)\leq \varepsilon/2$ in $k=\tO(1+ML\varepsilon^{-1})$ iterations. Consequently, the complexity to obtain an $\varepsilon$-solution to~\eqref{def:probIntro} is $\tO(1+ML\varepsilon^{-1})$.
\end{theorem}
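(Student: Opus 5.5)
The plan is to prove a one-step linear contraction of the primal-dual gap
\[
G_k:=\phi^\alpha(y_k)+\psi^\alpha(s_k)\ge 0 .
\]
Specifically, I aim to show $G_{k+1}\le(1-\eta)G_k$ for all $k\ge 0$. Then $G_k\le(1-\eta)^kG_0\le e^{-k\alpha/(L+\alpha)}G_0$. With $\alpha=\varepsilon/(2M)$, the bound $G_k\le\varepsilon/2$ holds once $k\ge (1+2ML/\varepsilon)\log(2G_0/\varepsilon)$, which is $\tO(1+ML\varepsilon^{-1})$. Here $G_0$ is finite because $y_0\in\dom h$ and $s_0$ lies in the domain of $\psi^\alpha$ (e.g.\ $s_0=\nabla f(y_0)$, the natural initialization). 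The final claim about an $\varepsilon$-solution then follows directly from Lemma~\ref{lem:reg_connection} applied to $(x,s)=(y_k,s_k)$.

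For the contraction, I bound the four pieces of $G_{k+1}$ separately, using the ``lagged'' structure: both $y_{k+1}-y_k=\eta(x_{k+1}-y_k)$ and $s_{k+1}-s_k=\eta(\nabla f(y_k)-s_k)$ are built from quantities anchored at step $k$.

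\emph{Primal side.} By $L$-smoothness of $f$ at $y_k$,
\[
f(y_{k+1})\le f(y_k)+\eta\inner{\nabla f(y_k)}{x_{k+1}-y_k}+\tfrac{L\eta^2}{2}\|x_{k+1}-y_k\|^2 .
\]
By $\alpha$-strong convexity of $h^\alpha$,
\[
h^\alpha(y_{k+1})\le (1-\eta)h^\alpha(y_k)+\eta h^\alpha(x_{k+1})-\tfrac{\alpha\eta(1-\eta)}{2}\|x_{k+1}-y_k\|^2 .
\]

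\emph{Dual side.} The function $(h^\alpha)^*$ is $(1/\alpha)$-smooth, and $\nabla(h^\alpha)^*(-s_k)=x_{k+1}$ by the definition of $x_{k+1}$. Hence
\[
(h^\alpha)^*(-s_{k+1})\le (h^\alpha)^*(-s_k)-\eta\inner{x_{k+1}}{\nabla f(y_k)-s_k}+\tfrac{\eta^2}{2\alpha}\|\nabla f(y_k)-s_k\|_*^2 .
\]
By $(1/L)$-strong convexity of $f^*$,
\[
f^*(s_{k+1})\le(1-\eta)f^*(s_k)+\eta f^*(\nabla f(y_k))-\tfrac{\eta(1-\eta)}{2L}\|\nabla f(y_k)-s_k\|_*^2 .
\]

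Both quadratic coefficients are nonpositive exactly when $L\eta\le\alpha(1-\eta)$. This holds with equality for $\eta=\alpha/(L+\alpha)$, which explains the stepsize and lets me drop both squared terms.

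The remaining step, which I expect to be the main bookkeeping obstacle, is checking that the linear terms collapse to $(1-\eta)G_k$. For this I substitute the two Fenchel equalities
\[
f^*(\nabla f(y_k))=\inner{\nabla f(y_k)}{y_k}-f(y_k),
\qquad
(h^\alpha)^*(-s_k)=-\inner{s_k}{x_{k+1}}-h^\alpha(x_{k+1}).
\]
Summing the four bounds and collecting terms, the $f(y_k)$ terms give $(1-\eta)f(y_k)$. The $\eta h^\alpha(x_{k+1})$ term cancels against $-\eta h^\alpha(x_{k+1})$, which comes from writing the unweighted $(h^\alpha)^*(-s_k)$ as $(1-\eta)(h^\alpha)^*(-s_k)$ plus $\eta(-\inner{s_k}{x_{k+1}}-h^\alpha(x_{k+1}))$. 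The inner products $\eta\inner{\nabla f(y_k)}{x_{k+1}-y_k}$, $-\eta\inner{x_{k+1}}{\nabla f(y_k)-s_k}$, $-\eta\inner{s_k}{x_{k+1}}$ and $\eta\inner{\nabla f(y_k)}{y_k}$ sum to zero. What remains is
\[
(1-\eta)\bigl[f(y_k)+h^\alpha(y_k)+(h^\alpha)^*(-s_k)+f^*(s_k)\bigr]=(1-\eta)G_k,
\]
as desired. If a sign does not close as hoped, my fallback is to carry a Bregman term such as $\D_f(y_k\|\cdot)$ or a term in $\|x_{k+1}-y_k\|^2$ through a Lyapunov function rather than $G_k$ alone. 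However, I expect the exact self-dual symmetry of Proposition~\ref{prop:self_dual} to make the cancellation clean.
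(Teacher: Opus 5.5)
Your proposal is correct and follows essentially the same route as the paper. Your four bounds are exactly Lemma~\ref{lem:gcg_per_step_duality}(a)--(d), the choice $\eta=\alpha/(L+\alpha)$ cancels both quadratic terms, and the Fenchel equalities at $(y_k,\nabla f(y_k))$ and $(x_{k+1},s_k)$ collapse the linear terms to give $G_{k+1}\le(1-\eta)G_k$, after which Lemma~\ref{lem:reg_connection} finishes; your bookkeeping closes exactly as you describe, so the fallback Lyapunov function is unnecessary.
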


\noindent \textbf{Proof Sketch}: Our argument is largely similar to~\cite{zhao2023generalized}. We begin by using smoothness of $f$ and $(h^\alpha)^*$ and the strong convexity of $h^\alpha$, $f^*$ to provide a series of inequalities. Exploiting the primal-dual relations between the pairs $(x_{k+1}, s_k)$ and $(y_{k},\nabla f(y_k))$ allows us to eliminate terms and, using our choice of $\alpha$, conclude with the single-step bound
\[\phi^\alpha(y_{k+1})+\psi^\alpha(s_{k+1})\leq (1-\eta)(\phi^\alpha(y_{k})+\psi^\alpha(s_{k})),\]
which provides the complexity result when recursively expanded with our choice of $\alpha$.

To make the correspondence in Proposition~\ref{prop:self_dual} concrete, we revisit the zero-sum game~\eqref{def:bilinear_game}. Unlike the one-average case, which corresponds to sequential play, the correspondence in~\eqref{corresp:two_avg} corresponds to \textit{simultaneous play}. The game begins with strategies $y_0$ and $z_0$ for the primal and dual players, respectively. Each round then proceeds in like manner, illustrated below. Updates in the same column are interpreted to occur simultaneously.
\[
\begin{tikzpicture}[baseline=(A.center)]
  \node (A) at (0,0) {$\left(z_0=s_0,\; y_0=v_0\right)$};
  \node[anchor=center] (B) at (4,0) {$\begin{pmatrix}\bar z_{k+1}=\nabla f(y_k)\\
x_{k+1}=\nabla (h^\alpha)^*(-z_k)\end{pmatrix}$};
    \node[anchor=center] (LB) at (4,0.7) {\footnotesize\textbf{(a)}};
    \node[anchor=center] (LC) at (8,0.7) {\footnotesize\textbf{(b)}};
  \node[anchor=center] (C) at (8,0) {$\begin{pmatrix}
  z_{k+1}=s_{k+1}\\
      y_{k+1}=v_{k+1}
  \end{pmatrix}$};

  \draw[arrows = {->[]},semithick] (A.east) -- (B.west);
  \draw [->, semithick] ($(B.north east) + (0,-4mm)$) to [out=10,in=170] ($(C.north west) + (0,-4mm)$);
  \draw [->, semithick] ($(C.south west) + (0,4mm)$) to [out=-170,in=-10] ($(B.south east) + (0,4mm)$);
\end{tikzpicture}
\]
 Each round begins in \textbf{(a)} with the players simultaneously predicting the opposing strategy and determining their own response based on the previous mixed strategies. The responses/predictions are then used to update the mixed strategies in \textbf{(b)}, which are then used in the next round. The simultaneous play dynamics of Algorithm~\ref{alg:AggGCG_primal} were also examined in~\cite{zhao2023generalized}, where the authors demonstrated that Algorithm~\ref{alg:AggGCG_primal} is in fact equivalent to a deterministic variant of the popular Logistic Fictitious Play~\cite{fudenberg1993learning} algorithm.

Theorem~\ref{thm:agg_GCG_conv} guarantees convergence in the primal-dual gap $\phi^\alpha(y_k)+\psi^\alpha(s_k)$, however it does not provide guarantees for a computable primal-dual certificate. While a more careful analysis may provide a suitable certificate, the difference from the one-average Algorithm~\ref{alg:PDCP} (which provides a natural certificate) is notable and admits a simple geometric explanation. In Algorithm~\ref{alg:PDCP}, evaluating the gradient at $x_k$ naturally allows us to control $\ell_f(x_{k+1};x_k)+\frac{\alpha (1-\eta)}{2\eta}\|x_k-x_{k+1}\|^2$ by smoothness and a suitable choice of $\eta$. However, for Algorithm~\ref{alg:AggGCG_primal}, there is no such trivial relationship between the linearization point $y_{k-1}$, the test point $y_{k+1}$, and the exact minimizers $x_{k+1}$ and $x_k$, as visualized in Figure~\ref{fig:AggGCG_taa_geometry}(a). We therefore cannot straightforwardly exploit the smoothness of $f$, as done in the proof of Theorem~\ref{thm:da_convergence}.

    \section{A Three-Average Method}\label{sec:three_avg}
     Motivated by the shortcomings of AggGCG, we propose TAA, and examine its dual counterpart. TAA restores convergence guarantees for a computable certificate while achieving optimal complexity like Nesterov's accelerated gradient (AG) method. Full proofs can be found in Appendix~\ref{appdx:three_avg}. 
        \begin{algorithm}[H]\caption{TAA}\label{alg:three_avg_primal}
        \begin{algorithmic}
            \REQUIRE given $y_0\in\dom h$, $L>0$, $\alpha>0$, set $s_0=\nabla f(y_0)$ and compute
            \begin{gather}
                x_0=\underset{x\in\R^n}\argmin\left\{\inner{s_0}{x}+h^\alpha(x)\right\},\quad 
                \lambda=\frac{2\alpha}{\alpha+\sqrt{\alpha^2+4L\alpha}}\label{eq:x0_and_lambda}.
            \end{gather}
            \vspace{-1.5em}
            \FOR{$k\geq 0$}
            \STATE \textbf{1.} Compute
            \vspace{-1em}
                \begin{align}
                    \tx_{k+1}&= (1-\lambda)y_k+\lambda x_k, \label{def:tx_taa} \\
                    s_{k+1}&=(1-\lambda)s_k+\lambda\nabla f(\tx_{k+1}), \label{def:s_3avg} \\
                    x_{k+1}&=\underset{x\in\R^n}\argmin\left\{\inner{s_{k+1}}{x}+h^\alpha(x)\right\}\label{def:x_3avg},\\
                    y_{k+1}&= (1-\lambda)y_k+\lambda x_{k+1}\label{def:y_3avg}.
                \end{align}
            \ENDFOR
        \end{algorithmic}
    \end{algorithm}
     Instead of linearizing at the previous test point $y_{k-1}$, TAA linearizes at a convex combination point $\tx_{k+1}$. As visualized in Figure~\ref{fig:AggGCG_taa_geometry}(b), the triangles $(y_k,y_{k+1}, \tx_{k+1})$ and $(y_k,x_{k+1}, x_{k})$ are clearly similar. Furthermore, the rescaling coefficient is dependent only on the convex combination parameter $\lambda$. As a result, we obtain a simple geometric relation when analyzing Algorithm~\ref{alg:three_avg_primal}. Note that TAA involves three averaging steps: two in the primal and one in the dual. Despite differences from existing variants of the AG method, TAA still achieves nearly optimal complexity for CSCO, as shown below.
     
 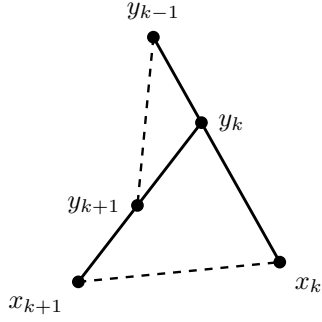
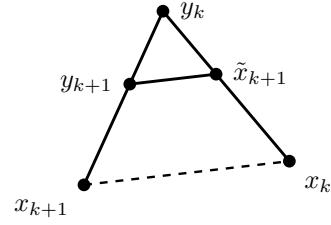
\begin{figure}[t]
\centering
\captionsetup[subfigure]{justification=centering}

\begin{subfigure}[t]{0.46\textwidth}
\centering
\begin{tikzpicture}[scale=1.05, every node/.style={font=\small}]
    \tikzset{
        pt/.style={circle, fill=black, inner sep=1.7pt},
        seg/.style={line width=1.1pt},
        aux/.style={line width=0.9pt, dashed}
    }

    \coordinate (Aykm1) at (1.1,3.2);
    \coordinate (Axk)   at (2.7,0.35);
    \coordinate (Axkp1) at (0.15,0.1);

    \coordinate (Ayk)   at ($(Aykm1)!0.38!(Axk)$);     
    \coordinate (Aykp1) at ($(Ayk)!0.52!(Axkp1)$);     

    \draw[seg] (Aykm1) -- (Axk);
    \draw[seg] (Ayk) -- (Axkp1);

    \draw[aux] (Aykm1) -- (Aykp1);
    \draw[aux] (Axkp1) -- (Axk);

    \node[pt,label=above:{$y_{k-1}$}]      at (Aykm1) {};
    \node[pt,label=right:{$y_k$}]          at (Ayk)   {};
    \node[pt,label=left:{$y_{k+1}$}]       at (Aykp1) {};
    \node[pt,label=below left:{$x_{k+1}$}] at (Axkp1) {};
    \node[pt,label=below right:{$x_k$}]    at (Axk)   {};
\end{tikzpicture}
\caption{Geometry of the AggGCG update.}
\end{subfigure}
\hfill
\begin{subfigure}[t]{0.46\textwidth}
\centering
\begin{tikzpicture}[scale=1.05, every node/.style={font=\small}]
    \tikzset{
        pt/.style={circle, fill=black, inner sep=1.7pt},
        seg/.style={line width=1.1pt},
        aux/.style={line width=0.9pt, dashed}
    }

    \coordinate (Tyk)   at (1.25,2.35);
    \coordinate (Txk)   at (2.85,0.45);
    \coordinate (Txkp1) at (0.25,0.15);
    \coordinate (Tykm1) at (0.85,3.3);

    \coordinate (Ttx)   at ($(Tyk)!0.42!(Txk)$);      
    \coordinate (Tykp1) at ($(Tyk)!0.42!(Txkp1)$);    


    \draw[seg] (Tyk) -- (Txk);
    \draw[seg] (Tyk) -- (Txkp1);
    \draw[seg] (Tykp1) -- (Ttx);
    \draw[aux] (Txkp1) -- (Txk);

    \node[pt,label=right:{$y_k$}]                 at (Tyk)   {};
    \node[pt,label=left:{$y_{k+1}$}]              at (Tykp1) {};
    \node[pt,label=right:{$\tx_{k+1}$}]     at (Ttx)   {};
    \node[pt,label=below left:{$x_{k+1}$}]        at (Txkp1) {};
    \node[pt,label=below right:{$x_k$}]           at (Txk)   {};
\end{tikzpicture}
\caption{Geometry of the TAA update.}
\end{subfigure}

\caption{Geometric illustration of the AggGCG and TAA updates. In AggGCG, there is no trivially exploitable relationship between the minimizer displacement $x_{k+1}-x_k$ and the linearization point displacement $y_{k+1}-y_{k-1}$. In TAA, the linearization point $\tx_{k+1}$ restores a simple geometric relationship by interpolating between the last test point $y_k$ and minimizer $x_k$. As a result, the magnitude of the vectors \(y_{k+1}-\tx_{k+1}\) and \(x_{k+1}-x_k\) can be related by a $\lambda$-dependent rescaling.}
\label{fig:AggGCG_taa_geometry}
\end{figure}

  \begin{theorem}\label{thm:taa_cmplx}
        
        Given $\varepsilon>0$, choosing $\alpha=\varepsilon/(2M)$, then the pair $(y_k,\Gamma_k)$ is an $\varepsilon$-certificate for~\eqref{def:probIntro} 
        in $k=\tO(1+\sqrt{ML/\varepsilon})$ iterations of Algorithm~\ref{alg:three_avg_primal}, where $\Gamma_k$ is the ACP model for~\eqref{def:probIntro_reg} induced by $(y_0,\{\tx_{i+1}\}_{i=0}^{k-1}, \{\lambda\}^k)$.
    \end{theorem}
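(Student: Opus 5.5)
Our plan mirrors the proof sketch of Theorem~\ref{thm:da_convergence}: we aim for a one-step contraction of the model gap $\phi^\alpha(y_k)-\min_x\Gamma_k(x)$ with factor $(1-\lambda)$. Two facts drive this. First, since $s_k$ is exactly the aggregated slope of $\Gamma_k$ (by~\eqref{def:sk1} with $\zeta_j=\lambda$ and linearization points $\tx_{j+1}$), the minimizer of $\Gamma_k$ is precisely $x_k$ from~\eqref{def:x_3avg} (and~\eqref{eq:x0_and_lambda} for $k=0$). Hence $\min_x\Gamma_k=\Gamma_k(x_k)$. Second, by Lemma~\ref{lem:model_props}(b), $\Gamma_k$ is $\alpha$-strongly convex. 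Consequently $\Gamma_k(x)\ge\Gamma_k(x_k)+\frac{\alpha}{2}\|x-x_k\|^2$ for all $x$.

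The key step is to prove the inequality
\[
\Gamma_{k+1}(x_{k+1})\geq (1-\lambda)\Gamma_k(x_k)+\lambda\,\phi^\alpha(y_{k+1})\quad\text{(up to convexity slack)}.
\]
More precisely, we would write $\Gamma_{k+1}(x_{k+1})=(1-\lambda)\Gamma_k(x_{k+1})+\lambda(h^\alpha(x_{k+1})+\ell_f(x_{k+1};\tx_{k+1}))$ and lower bound $\Gamma_k(x_{k+1})$ by $\Gamma_k(x_k)+\frac{\alpha}{2}\|x_{k+1}-x_k\|^2$. We would then use the convexity identities: $(1-\lambda)\ell_f(y_k;\tx_{k+1})+\lambda\ell_f(x_{k+1};\tx_{k+1})=\ell_f(y_{k+1};\tx_{k+1})$, since $\ell_f$ is affine, and $h^\alpha(y_{k+1})\le(1-\lambda)h^\alpha(y_k)+\lambda h^\alpha(x_{k+1})$. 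The geometric relation of Figure~\ref{fig:AggGCG_taa_geometry}(b), $y_{k+1}-\tx_{k+1}=\lambda(x_{k+1}-x_k)$, together with $L$-smoothness, gives
\[
\ell_f(y_{k+1};\tx_{k+1})\ge f(y_{k+1})-\tfrac{L\lambda^2}{2}\|x_{k+1}-x_k\|^2 .
\]
The choice of $\lambda$ in~\eqref{eq:x0_and_lambda} solves $L\lambda^2=\alpha(1-\lambda)$. Therefore the quadratic loss is exactly absorbed by the term $\frac{(1-\lambda)\alpha}{2}\|x_{k+1}-x_k\|^2$ coming from strong convexity.

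This is the step we expect to be the main obstacle. To make the bookkeeping close, we would track a potential of the form $\phi^\alpha(y_k)-\Gamma_k(x_k)$ together with the terms $\ell_f(y_k;\tx_{k+1})\le f(y_k)$ and $h^\alpha(y_k)$. We would also need $\lambda\Gamma$-type terms evaluated at $y_k$ rather than $x_k$. Our first attempt would handle this by showing $\Gamma_k(y_k)\le \phi^\alpha(y_k)$ (Lemma~\ref{lem:model_props}(a)) and arranging the recursion as
\[
\phi^\alpha(y_{k+1})-\Gamma_{k+1}(x_{k+1})\le(1-\lambda)\bigl[\phi^\alpha(y_k)-\Gamma_k(x_k)\bigr].
\]
If the direct form fails, we would instead prove the auxiliary invariant $\Gamma_k(x_k)\ge \text{(weighted model of }y_k)$ by induction, as in accelerated estimate-sequence arguments.

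Unrolling the recursion gives a gap of at most $(1-\lambda)^k[\phi^\alpha(y_0)-\Gamma_0(x_0)]$. The initial gap is bounded by $\mathcal{O}(LD^2+\alpha M)$-type quantities; it is finite because $\dom h$ is bounded, $\Gamma_0\ge\phi^\alpha-$ (smoothness error), and $w\le M$. Since $\lambda\ge\frac12\sqrt{\alpha/L}$ when $\alpha\le L$, it suffices to take $k=\mathcal{O}(\sqrt{L/\alpha}\log(\text{initial gap}/\varepsilon))$. With $\alpha=\varepsilon/(2M)$ this is $\tO(1+\sqrt{ML/\varepsilon})$, making $(y_k,\Gamma_k)$ an $\varepsilon$-certificate per Definition~\ref{def:pd_cert}; via Lemmas~\ref{lem:reg_connection} and~\ref{lem:model_props}(c), $y_k$ is then an $\varepsilon$-solution.
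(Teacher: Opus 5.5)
Your proposal is correct and is essentially the paper's proof. The paper proves $\min_x\Gamma_k(x)\ge\phi^\alpha(y_k)-(1-\lambda)^k\Delta$ by induction, which is exactly your contraction $\phi^\alpha(y_{k+1})-\Gamma_{k+1}(x_{k+1})\le(1-\lambda)[\phi^\alpha(y_k)-\Gamma_k(x_k)]$, and it uses the same ingredients you list: $x_k$ minimizes $\Gamma_k$, $\Gamma_k$ is $\alpha$-strongly convex, $f(y_k)\ge\ell_f(y_k;\tx_{k+1})$, convexity of $\gamma_k$, $y_{k+1}-\tx_{k+1}=\lambda(x_{k+1}-x_k)$, $L\lambda^2=\alpha(1-\lambda)$, and $L$-smoothness. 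The direct form closes without any fallback, but your displayed ``key step'' should read $\Gamma_{k+1}(x_{k+1})\ge(1-\lambda)\Gamma_k(x_k)+\phi^\alpha(y_{k+1})-(1-\lambda)\phi^\alpha(y_k)$ rather than containing $\lambda\phi^\alpha(y_{k+1})$, and the bound $\Gamma_k(y_k)\le\phi^\alpha(y_k)$ is never needed.
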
  
    \noindent\textbf{Proof Sketch:} The proof of Theorem~\ref{thm:taa_cmplx} follows a similar pattern as Theorem~\ref{thm:da_convergence}. However, instead of showing a recursive inequality in the $\{x_k\}$ sequence, we show the inequality
    \[
    \Gamma_{k+1}(x_{k+1})\geq \phi^\alpha(y_{k+1})-(1-\lambda)^{k+1}(\phi^\alpha(y_0)- \Gamma_{0}(x_{0})),
    \]
    which directly results in a convergence rate. The complexity bound follows from our choice of $\lambda$.

    Theorem~\ref{thm:taa_cmplx} demonstrates that Algorithm~\ref{alg:three_avg_primal} achieves near-optimal, accelerated complexity for solving~\eqref{def:probIntro}. However, the algorithm is substantially different from other acceleration schemes based on proximal mappings. To clarify the source of TAA's acceleration, we consider the GEM \cite{lan2018random} applied to the dual problem~\eqref{eq:fenchel_rockafellar_dual}, shown in Algorithm~\ref{alg:GEM-dual}. 
    To use the GEM, we need a suitable Bregman regularizer. Accordingly, we assume that the function $\nu^*:\R^n\to(-\infty,\infty]$ is $1$-strongly convex with respect to the dual norm $\|\cdot\|_*$ and is $L$-smooth relative to $f^*$\footnote{That is, $Lf^*(\cdot)-\nu^*(\cdot)$ is a convex function. Note that this definition does not assume that $\nu^*$ is differentiable, as in~\cite{luRelativelySmoothConvex2018}. Equivalently, we can say that $f^*$ is $L^{-1}$-strongly convex relative to $\nu^*$.} with $\dom f^*\subseteq \dom \nu^*$. Note that either $\nu^*=Lf^*$ or $\nu^*=\frac{1}{2}\|\cdot\|_*^2$ would satisfy these requirements. 
\begin{algorithm}[H]\caption{GEM}\label{alg:GEM-dual}
        \begin{algorithmic}
            \REQUIRE given $\alpha>0$ and $g_0\in\dom f^*$, set $z_0=g_0$, $v_{-1}=v_0=\nabla (h^\alpha)^*(-g_0)$, $\tau_0=\alpha/L$, $A_0=1$, and $a_{-1}=0$
            \FOR{$k\geq 0$}
            
            \STATE \textbf{1.} Compute
            \vspace{-2em}
            \begin{gather}
               \tau_{k+1}=\tau_k+\frac{\alpha a_k}{L},\quad a_k=\frac{\tau_k+\sqrt{\tau_k^2+4\tau_kA_k}}{2},\label{eq:gem_scalars}\\
             A_{k+1}=A_k+a_k,\quad    \hat v_{k} =v_k+\frac{a_{k-1}}{a_k}(v_k-v_{k-1}).\label{eq:gem_hatx}
            \end{gather}
            \vspace{-2em}
            \STATE \textbf{2.} Compute
            
            \vspace{-2em}
                \begin{align}
                    g_{k+1}&=\underset{g\in\R^n}\argmin\left\{ a_k[\inner{-\hat v_k}{g}+f^*(g)]+\frac{\tau_k}{\alpha}\D_{\nu^*}(g\|g_k)\right\}\label{def:g_gem},\\
                    z_{k+1}&=\frac{A_k}{A_{k+1}}z_{k}+\frac{a_{k}}{A_{k+1}}g_{k+1},\quad v_{k+1}=\nabla (h^\alpha)^*(-z_{k+1}).\label{def:xz_gem}
                \end{align}
                \vspace{-1.5em}
            \ENDFOR
        \end{algorithmic}
    \end{algorithm}

    The GEM is known to have the same optimal complexity as the AG method. The next result presents the complexity of our strongly convex GEM variant for computing an $\varepsilon$-certificate to~\eqref{def:probIntro}.
    \begin{theorem}\label{thm:gem_cmplx}
        
        Assume that $\max_{g\in\dom f^*}\D_{\nu^*}(g\|g_0)= D<\infty$ is bounded. Given $\varepsilon>0$ and choosing $\alpha=\varepsilon/(2M)$, then the pair $(z_k,\Gamma_k^*)$ is an $\varepsilon$-certificate for~\eqref{def:probIntro} 
        in $k=\tO(1+\sqrt{ML/\varepsilon})$ iterations of Algorithm~\ref{alg:GEM-dual}, where $\Gamma_k^*$ is the ACP model for~\eqref{eq:fenchel_rockafellar_dual} induced by $(z_0,\{z_{i+1}\}_{i=0}^{k-1}, \{a_i/A_{i+1}\}_{i=0}^{k-1})$.
    \end{theorem}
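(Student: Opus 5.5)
We will run a standard estimate-sequence (GEM-type) analysis on the dual problem \eqref{eq:fenchel_rockafellar_dual}, written as $\psi^\alpha(z)=F(z)+f^*(z)$ with $F(z):=(h^\alpha)^*(-z)$. Here $F$ is $(1/\alpha)$-smooth and $f^*$ is $(1/L)$-strongly convex relative to $\nu^*$. Throughout, the dual ACP model $\Gamma_k^*$ plays the role that $\Gamma_k$ played for MDA/TAA.

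The first step is to record the dual analogue of Lemma~\ref{lem:model_props}, namely Lemma~\ref{lem:model_props_dual}. Define $\Gamma_0^*(z)=f^*(z)+\ell_F(z;z_0)$ and
\[
\Gamma_{j+1}^*=(1-\beta_j)\Gamma_j^*+\beta_j\bigl(f^*+\ell_F(\cdot;z_{j+1})\bigr),\qquad \beta_j=a_j/A_{j+1}.
\]
Then $\Gamma_k^*\le\psi^\alpha$. Moreover, with the aggregated primal point
\[
u_k=\sum_j(\text{weights})\,\nabla F(z_j)\cdot(-1),
\]
which is a convex combination of the $v_j=\nabla(h^\alpha)^*(-z_j)$, we have
\[
\psi^\alpha(z_k)+\phi^\alpha(u_k)\le\psi^\alpha(z_k)-\min\Gamma_k^*.
\]
So it suffices to show
\[
\psi^\alpha(z_k)-\min_z\Gamma_k^*(z)\le \varepsilon/2.
\]
Since $\nabla F(z)=-\nabla(h^\alpha)^*(-z)=-v$, the model is $A_k\Gamma_k^*(z)=\sum_{i}a_i[f^*(z)+F(z_i)-\langle v_i,z-z_i\rangle]$ with $a_{-1}$ and $A_0=1$ absorbing the $z_0$ cut. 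This is exactly the lower model used in GEM/dual-averaging analyses.

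The second step is the core per-iteration inequality, following Lan--Zhou's GEM proof adapted to the strong convexity of $f^*$. Use the three-point optimality condition of \eqref{def:g_gem}. For every $g$,
\[
a_k[-\langle\hat v_k,g_{k+1}-g\rangle+f^*(g_{k+1})-f^*(g)]+\tfrac{\tau_k}{\alpha}\D_{\nu^*}(g_{k+1}\|g_k)\le\tfrac{\tau_k}{\alpha}\D_{\nu^*}(g\|g_k)-\tfrac{\tau_k+a_k\alpha/L}{\alpha}\D_{\nu^*}(g\|g_{k+1}).
\]
The strong-convexity gain $a_k/L$ relative to $\nu^*$ is what produces $\tau_{k+1}=\tau_k+\alpha a_k/L$. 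Combine this with smoothness of $F$ at $z_{k+1}=(A_kz_k+a_kg_{k+1})/A_{k+1}$, which gives
\[
A_{k+1}F(z_{k+1})\le A_kF(z_k)-a_k\langle v_{k+1},g_{k+1}-z_k\rangle\text{-type terms}+\tfrac{a_k^2}{2\alpha A_{k+1}}\|g_{k+1}-g_k\|_*^2\ \text{corrections}.
\]
Use convexity of $f^*$ for $f^*(z_{k+1})$. The extrapolation $\hat v_k=v_k+\frac{a_{k-1}}{a_k}(v_k-v_{k-1})$ telescopes the cross terms $\langle v_{k+1}-v_k,g_{k+1}-g\rangle$. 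The leftover term $a_{k-1}\langle v_k-v_{k-1},g_k-g_{k+1}\rangle$ is bounded by Young's inequality and $\|v_k-v_{k-1}\|\le\alpha^{-1}\|z_k-z_{k-1}\|_*$ (from the $1/\alpha$-smoothness of $(h^\alpha)^*$). It is absorbed by $\frac{\tau_k}{2\alpha}\|g_{k+1}-g_k\|_*^2$ thanks to $a_k^2=\tau_k A_{k+1}$, which is exactly the scalar rule \eqref{eq:gem_scalars}. Summing and taking $\min$ over $g$, we aim for
\[
A_k\bigl(\psi^\alpha(z_k)-\min\Gamma_k^*\bigr)\le \tfrac{\tau_0}{\alpha}\D_{\nu^*}(g^\star\|g_0)\le D/L,
\]
where the minimizer of $\Gamma_k^*$ lies in $\dom f^*$ so the bound by $D$ applies. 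The main obstacle is this step. We need to bound the gap against the \emph{model minimum} rather than $\psi^\alpha_*$, which requires carefully keeping the linearizations $\ell_F(\cdot;z_{i})$ instead of replacing them by $F$, and handling the boundary term at $k$ with $\hat v$ vs.\ $v_{k+1}$ consistently. I would first try to mimic Lan--Zhou's potential exactly, replacing $\psi^\alpha(g)$ by $A_k\Gamma_k^*(g)$ everywhere.

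The last step is to bound the growth of $A_k$. From $a_k^2=\tau_k(A_k+a_k)$ and $\tau_k\ge\alpha/L$ increasing with $\tau_{k+1}=\tau_k+\alpha a_k/L$, we get $a_k\ge\sqrt{\tau_kA_k}$. This gives $A_{k+1}\ge A_k(1+\sqrt{\alpha/L})$, hence $A_k\ge(1+\sqrt{\alpha/L})^k$. (One could also use $\tau_k\gtrsim A_k\alpha/L$, giving $A_{k+1}\ge A_k(1+c\sqrt{\alpha/L})$ directly.) Therefore the gap is at most $(D/L)(1+\sqrt{\alpha/L})^{-k}\le\varepsilon/2$ once $k\ge\sqrt{L/\alpha}\log(2D/(L\varepsilon))+1$. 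With $\alpha=\varepsilon/(2M)$ this is $\tO(1+\sqrt{ML/\varepsilon})$. Finally, Definition~\ref{def:pd_cert} in its dual form (via Lemma~\ref{lem:model_props_dual}(c)) together with Lemma~\ref{lem:reg_connection} certifies the pair $(z_k,\Gamma_k^*)$.
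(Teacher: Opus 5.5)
Your skeleton is the right one: three-point inequality for the $g_{k+1}$ step, telescoping of the extrapolation, and geometric growth of $A_k$. But the step that produces acceleration does not work as you describe it.

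Apply Young to the leftover $a_{k-1}\inner{v_k-v_{k-1}}{g_k-g_{k+1}}$ against the Bregman term $\frac{\tau_k}{2\alpha}\|g_{k+1}-g_k\|_*^2$. This leaves a remainder $\frac{\alpha a_{k-1}^2}{2\tau_k}\|v_k-v_{k-1}\|^2=\frac{\alpha A_{k-1}}{2}\|v_k-v_{k-1}\|^2$, which needs a matching \emph{negative} term. Your Lipschitz bound $\|v_k-v_{k-1}\|\le\alpha^{-1}\|z_k-z_{k-1}\|_*$ only converts it into $\frac{A_{k-1}}{2\alpha}\|z_k-z_{k-1}\|_*^2$, and nothing cancels that. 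The only other negative quadratic in play is the $(1/L)$-strong-convexity gain of $f^*$ in the averaging step $z_k=\frac{A_{k-1}}{A_k}z_{k-1}+\frac{a_{k-1}}{A_k}g_k$. That gain equals $\frac{A_{k-1}A_k}{2La_{k-1}}\|z_k-z_{k-1}\|_*^2$, which dominates only if $a_{k-1}/A_k\le\alpha/L$. That is the unaccelerated regime, whereas here $a_{k-1}/A_k=\lam\approx\sqrt{\alpha/L}$.

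Your upper smoothness bound between $z_k$ and $z_{k+1}$ is also misstated. Since $z_{k+1}-z_k=\frac{a_k}{A_{k+1}}(g_{k+1}-z_k)$, its correction is $\frac{a_k^2}{2\alpha A_{k+1}}\|g_{k+1}-z_k\|_*^2$, not $\frac{a_k^2}{2\alpha A_{k+1}}\|g_{k+1}-g_k\|_*^2$. The latter would require a gradient at $(A_kz_k+a_kg_k)/A_{k+1}$, which GEM never computes. Even as you wrote it, that correction would consume the whole Bregman term $\frac{\tau_k}{2\alpha}\|g_{k+1}-g_k\|_*^2$, which you also want to spend on the Young step.

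The missing idea is the co-coercivity (gradient-difference) form of smoothness from Lemma~\ref{lem:hlam_smooth_ub}: $\ell_{(h^\alpha)^*}(z_k;z_{k+1})\le(h^\alpha)^*(-z_k)-\frac{\alpha}{2}\|v_{k+1}-v_k\|^2$. In the paper's argument one rewrites $\gamma_k(\cdot)=\inner{-\hat v_k}{\cdot}+f^*(\cdot)$ through the linearization $\ell_{(h^\alpha)^*}(\cdot;z_{k+1})$. This is exact at $z_{k+1}$ because $v_{k+1}$ is the gradient there, so $\psi^\alpha(z_{k+1})$ appears without any upper smoothness bound. The term $A_k\gamma_k(z_k)$ then produces $A_k\ell_{(h^\alpha)^*}(z_k;z_{k+1})$. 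Co-coercivity together with $A_k=a_k^2/\tau_{k+1}$ yields exactly $-\frac{\alpha a_k^2}{2\tau_{k+1}}\|v_{k+1}-v_k\|^2$. This cancels the Young remainder $\frac{\alpha}{2\tau_{k+1}}\|a_k(v_{k+1}-v_k)\|^2$ from the cross term $\inner{a_k(v_{k+1}-v_k)}{g_{k+2}-g_{k+1}}$.

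The same rewriting leaves, for arbitrary $g$, the terms $-a_k[\ell_{(h^\alpha)^*}(g;z_{k+1})+f^*(g)]$, which are precisely the cuts of $A_k\Gamma_k^*(g)$. So comparing against $\min\Gamma_k^*$ rather than $\psi^\alpha_*$, your ``main obstacle,'' is automatic once the inequality is kept for all $g\in\dom f^*$.

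Two smaller slips:
\begin{itemize}
\item Your target $A_k(\psi^\alpha(z_k)-\min\Gamma_k^*)\le D/L$ omits the initial term $A_0(\psi^\alpha(z_0)+\phi^\alpha(v_0))$ contributed by the $z_0$ cut. It fails in general already at $k=0$, where the left side is the Wolfe gap $S(z_0)=\psi^\alpha(z_0)+\phi^\alpha(v_0)$.
\item $\tau_k\ge\alpha/L$ alone gives only $A_{k+1}\ge A_k+\sqrt{\alpha A_k/L}$. Geometric growth needs $\tau_k=\alpha A_k/L$ (Lemma~\ref{lem:gem_technical}(a)), as in your parenthetical.
\end{itemize}
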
 
    \noindent\textbf{Proof Sketch.} The first part of the proof of Theorem~\ref{thm:gem_cmplx} uses the smoothness of $(h^\alpha)^*$, the optimality conditions on $g_{k+1}$, and Fenchel duality to prove the single-step bound\vspace{-0.2em}
    \begin{align*}
            A_k\psi^\alpha(z_k)-\frac{\alpha a_k^2}{2\tau_{k+1}}\|v_{k+1}-v_k\|^2+\frac{\tau_k}{\alpha}\D_{\nu^*}(g\|g_k)-\frac{\tau_k}{\alpha}\D_{\nu^*}(g_{k+1}\|g_k) - \frac{\tau_{k+1}}{\alpha}\D_{\nu^*}(g\|g_{k+1})
        \\\geq A_{k+1}\psi^\alpha(z_{k+1})+a_k\left[h^\alpha(v_{k+1})+\inner{v_{k+1}}{g}-f^*(g)\right] + a_k\inner{v_{k+1}-\hat v_k}{g_{k+1}-g}.
        \end{align*}
        \vspace{-1.2em}
        
        Recursively summing this inequality from $0$ to $k-1$ and applying the strong convexity of $\nu^*$, the construction of $\hat v_k$, and duality relations yield the certificate convergence bound. 

    We finally show the most fascinating duality result of the paper: Algorithm~\ref{alg:GEM-dual} is, in fact, the dual of Algorithm~\ref{alg:three_avg_primal}, clarifying the source of TAA's acceleration. 
    
    \begin{proposition}\label{prop:3avg_duality}
        Fix $y_0\in \dom h$ and suppose that
        we choose $\nu^*=Lf^*$ in Algorithm~\ref{alg:GEM-dual}. Then, setting $s_0=z_0=g_0=\nabla f(y_0)$, $\tx_0=y_0$, Algorithm~\ref{alg:GEM-dual} solving~\eqref{eq:fenchel_rockafellar_dual} is equivalent to Algorithm~\ref{alg:three_avg_primal} solving~\eqref{def:probIntro_reg} with the following correspondence
        \footnote{Since $f^*$ is not necessarily differentiable, there is some ambiguity in the definition of $\D_{\nu^*}(g\|g_k)$ in Step \textbf{2} of Algorithm~\ref{alg:GEM-dual}. The choice used in the inductive proof of Proposition~\ref{prop:3avg_duality} is to define $\D_{f^*}(g\|g_k)$ using $\tx_k\in\partial f^*(g_k)$.}
        \begin{equation}\label{corresp:3avg}
            \nabla f(\tx_{k})=g_{k},\quad s_{k}=z_{k},\quad x_{k}=v_{k}.
        \end{equation}
    \end{proposition}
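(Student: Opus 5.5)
The plan is to prove the correspondence~\eqref{corresp:3avg} by induction on $k$, after first showing that with $\nu^*=Lf^*$ the GEM step sizes collapse to the single constant $\lambda$ of~\eqref{eq:x0_and_lambda}.

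\textbf{Step 1 (scalars).} Set $c=\alpha/L$. I first show by induction that $\tau_k=cA_k$. This holds at $k=0$ since $\tau_0=c$ and $A_0=1$. If it holds at $k$, then $\tau_{k+1}=cA_k+ca_k=cA_{k+1}$. The definition of $a_k$ in~\eqref{eq:gem_scalars} means $a_k$ is the positive root of $a^2-\tau_ka-\tau_kA_k=0$, so $a_k^2=\tau_kA_{k+1}=cA_kA_{k+1}$. Hence $r:=a_k/A_k$ satisfies $r^2=c(1+r)$ and is therefore independent of $k$. Putting $\lambda':=a_k/A_{k+1}=r/(1+r)$ gives $\lambda'^2=c/(1+r)=c(1-\lambda')$. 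Solving this quadratic gives $\lambda'=2c/(c+\sqrt{c^2+4c})$, which equals $\lambda$ in~\eqref{eq:x0_and_lambda}. Two consequences follow. First, $a_{k-1}/a_k=1/(1+r)=1-\lambda$ for $k\ge1$ (and the term vanishes at $k=0$ since $a_{-1}=0$). Second, the $z$-update in~\eqref{def:xz_gem} reads $z_{k+1}=(1-\lambda)z_k+\lambda g_{k+1}$.

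\textbf{Step 2 (the $g$-step as a gradient evaluation).} Assume inductively that $g_k=\nabla f(\tx_k)$, $z_k=s_k$, and $v_j=x_j$ for $j\le k$. By the footnote convention we use $\tx_k\in\partial f^*(g_k)$ to define $\D_{f^*}(\cdot\|g_k)$. The objective in~\eqref{def:g_gem} is then, up to constants,
\[
a_k\bigl[-\inner{\hat v_k}{g}+f^*(g)\bigr]+A_k\bigl[f^*(g)-\inner{\tx_k}{g}\bigr],
\]
using $\tau_kL/\alpha=A_k$. This function is strongly convex, so its minimizer is unique. The optimality condition reads $(a_k+A_k)^{-1}(a_k\hat v_k+A_k\tx_k)\in\partial f^*(g_{k+1})$. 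Since $a_k/(a_k+A_k)=\lambda$, this is equivalent to
\[
g_{k+1}=\nabla f\bigl(\lambda\hat v_k+(1-\lambda)\tx_k\bigr).
\]

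\textbf{Step 3 (the key geometric identity, which I expect to be the main obstacle).} It remains to show $\lambda\hat v_k+(1-\lambda)\tx_k=\tx_{k+1}=(1-\lambda)y_k+\lambda x_k$. At $k=0$ we have $\tx_0=y_0$ and $\hat v_0=v_0=x_0$, so the identity is immediate. For $k\ge1$, substitute $\hat v_k=x_k+(1-\lambda)(x_k-x_{k-1})$ and $\tx_k=(1-\lambda)y_{k-1}+\lambda x_{k-1}$. The $x_{k-1}$ terms cancel, and the remainder is
\[
\lambda x_k+(1-\lambda)\bigl[(1-\lambda)y_{k-1}+\lambda x_k\bigr]=\lambda x_k+(1-\lambda)y_k
\]
by~\eqref{def:y_3avg}. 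Hence $g_{k+1}=\nabla f(\tx_{k+1})$.

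\textbf{Step 4 (closing the induction).} Comparing the $z$-update from Step 1 with~\eqref{def:s_3avg} gives $z_{k+1}=s_{k+1}$. The GLMO optimality condition for~\eqref{def:x_3avg} is $-s_{k+1}\in\partial h^\alpha(x_{k+1})$, i.e.\ $x_{k+1}=\nabla(h^\alpha)^*(-s_{k+1})$, and this equals $v_{k+1}$. The base case holds because $g_0=z_0=s_0=\nabla f(y_0)=\nabla f(\tx_0)$ and $v_0=\nabla(h^\alpha)^*(-s_0)=x_0$.

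The delicate points are twofold. One must check that the ratio $a_{k-1}/a_k$ is exactly $1-\lambda$, which Step 1 settles. One must also carry the subgradient choice $\tx_k\in\partial f^*(g_k)$ through the induction; this is consistent precisely because $g_k=\nabla f(\tx_k)$.
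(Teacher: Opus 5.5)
Your proposal is correct and follows essentially the same route as the paper. You show $a_k/A_{k+1}=\lambda$, induct on the correspondence, and use the optimality condition of the $g$-step with the subgradient choice $\tx_k\in\partial f^*(g_k)$ together with the extrapolation identity to get $\tx_{k+1}\in\partial f^*(g_{k+1})$. The differences are cosmetic: you normalize by $A_{k+1}$, work with $\lambda$ directly and treat $k=0$ separately, whereas the paper keeps the weighted form $A_{k+1}(f^*)'(g_{k+1})=A_k\tx_k+a_k\hat v_k$ and sets $x_{-1}=x_0$.
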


    We can once again examine the duality relation in Proposition~\ref{prop:3avg_duality} from the game-theoretic perspective of~\eqref{def:bilinear_game}. Unlike Algorithm~\ref{alg:AggGCG_primal}, we move back into sequential play with the turn structure illustrated below.
     \vspace{-1.3em}\[
\begin{tikzpicture}[]
  \node (A) at (0,0) {$y_0$};

  \node[anchor=center] (B) at (2.3,0) {($s_0=\nabla f(y_0)=z_0$)};
  
  \node[anchor=center] (C) at (6.5,0) {($x_0=v_0=\nabla (h^\alpha)^*(-z_0)$)};
  
  \node[anchor=center] (D) at (9.7,1) {$\begin{pmatrix}
      \tx_{k+1}\\\hat v_{k}
  \end{pmatrix}$};

  \node[anchor=center] (F) at (13,1) {$(\nabla f(\tx_{k+1})=g_{k+1})$};
  \node[anchor=center] (G) at (13.5,0) {$(s_{k+1}=z_{k+1})$};

  \node[anchor=center] (H) at (11.5,-1.) {$(x_{k+1}=v_{k+1}=\nabla (h^\alpha)^*(-z_{k+1}))$};
  \node[anchor=center] (LD) at (9.7,1.7) {\footnotesize\rm\textbf{(a)}};
  \node[anchor=center] (LF) at (13,1.5) {\footnotesize\rm\textbf{(b)}};
  \node[anchor=center] (LG) at (15,0) {\footnotesize\rm\textbf{(c)}};
  \node[anchor=center] (LH) at (11.4,-0.65) {\footnotesize\rm\textbf{(d)}};
  \node[anchor=center] (LI) at (10.2,-0.1) {\footnotesize\rm\textbf{(e)}};
  \node[anchor=center] (I) at (9.5,-0.1) {$y_{k+1}$};
  \draw[arrows = {->[]},semithick] (A.east) -- (B.west);
  \draw[arrows = {->[]},semithick] (B.east) -- (C.west);
  \draw[arrows = {->[]},semithick] (C.north) -- (D.west);
  \draw[arrows = {->[]},semithick] (D.east) -- (F.west);
  \draw[arrows = {->[]},semithick] ($(F.south) + (5mm,-0mm)$) -- ($(F.south) + (5mm,-4mm)$);
  \draw[arrows = {->[]},semithick] ($(G.south)$) -- ($(G.south) + (0mm,-4mm)$);
  \draw[arrows = {->[]},semithick] ($(H.north west) + (10.5mm,-1mm)$) -- ($(H.north west) + (10.5mm,3mm)$);
  \draw[arrows = {->[]},semithick] ($(I.north) + (2mm,0)$) -- ($(I.north) + (2mm,4mm)$);
  
\end{tikzpicture}
\]\vspace{-2em}

The primal player moves first with the starting strategy $y_0$. The dual player responds by setting their initial strategy $z_0=\nabla f(y_0)$ and the primal player responds with $x_0=\nabla(h^\alpha)^*(-z_0)$. Then, play begins. In \textbf{(a)}, the primal player begins by computing the ``lookahead'' strategy $\tx_{k+1}$ which acts as a prediction for their next strategy ($y_{k+1}$) before seeing any dual response. Simultaneously, the dual player forms an optimistic model of the primal player response using the extrapolated point $\hat v_{k}$. In \textbf{(b)}, the primal player then predicts the dual player's move as $\nabla f(\tx_{k+1})$, while the dual player performs an optimistic update to compute the response $g_{k+1}$. 
As we show in the proof of Proposition~\ref{prop:3avg_duality}, the optimistic response $g_{k+1}$ exactly coincides with the lookahead prediction $\nabla f(\tx_{k+1})$. Next, in \textbf{(c)} the dual player updates their mixed strategy $z_{k+1}$, the primal player computes their response $x_{k+1}$ in \textbf{(d)}, and finally the primal player updates their mixed strategy $y_{k+1}$ in \textbf{(e)}. The dual player does not directly see the sequence $\{y_k\}$, however as the proof of Proposition~\ref{prop:3avg_duality} shows, the optimistic prediction $\hat v_k$ and response $g_{k+1}$ implicitly provide information about $y_k$.

In comparison with the one and two-average methods, which use zero-regret dynamics based on averaging and best-response, the three-average case leverages optimistic learning dynamics: the GEM directly from the $\hat v_{k}$ extrapolation and TAA indirectly through the underlying dual process.

\section{Connections to Fisher Market Dynamics}\label{sec:fisher}
The motivation for the algorithmic progression MDA$\to$AggGCG$\to$TAA was primarily technical: first to show convergence in the linearization sequence, then to recover optimality certificates. In this section, we show that the progression also has an intuitive appeal grounded in Fisher market dynamics. We consider a smoothed variation of the Fisher market equilibrium with linear utility functions proposed by~\cite{chenAcceleratedPriceAdjustment2025}\footnote{The original formulation in~\cite{chenAcceleratedPriceAdjustment2025} did not contain the $\alpha$ perturbation},
\begin{equation}\label{def:dual_eg}
    \phi^\alpha(\mu)=
    \overbrace{\sum_{j=1}^n\exp(\mu_j)
    +\delta\sum_{i=1}^mB_i\log\left(\sum_{j=1}^n\exp\bigl(\delta^{-1}(\log b_{ij}-\mu_j)\bigr)\right)}^{f(\mu)}
    +\overbrace{\delta_{[\underline{\mu},\overline{\mu}]^n}(\mu)}^{h(\mu)}+\overbrace{\frac{\alpha}{2}\|\mu-\mu_{\text{ref}}\|^2}^{\alpha w(\mu)},
\end{equation}
where $\mu_j$ is the logarithmic unit price of item $j$, $b_{ij}$ is buyer $i$'s valuation for item $j$, $B_i$ is buyer $i$'s budget, $\underline{\mu}\leq\overline{\mu}$ define box constraints, and $\delta>0$ is an entropic smoothing parameter. The added regularization term admits a natural interpretation as a penalization for multiplicative deviations from a benchmark price. In this example, \emph{primal variables are prices, dual variables represent demand} (incorporating individual allocations from each buyer). 


\noindent\textbf{MDA (One-Average)}. In each round, the market generates provisional prices $x_k$ by a best-fit approximation to smoothed demand $s_k$. Buyers then respond to this \textit{provisional} price in each round with the proposed demand $\nabla f(x_k)$. However, Theorem~\ref{thm:da_convergence} does not show convergence in the provisional prices, but instead the exponentially smoothed price sequence $\{y_k\}$. The market dynamics (Algorithm~\ref{alg:PDCP}) are therefore operationally asymmetric, with the market responding to a smoothed demand model $s_k$ without explicitly posting the smoothed price $y_k$.

\noindent\textbf{AggGCG (Two-Averages)}. Algorithm~\ref{alg:AggGCG_primal} remedies the asymmetry in MDA market dynamics, but loses convergence guarantees for optimality certificates. In the MDA results, the ACP model included the latest buyer allocation $\nabla f(x_{k+1})$.  As a result, there was an explicit, primal-dual relationship between the latest provisional price $x_k$ and smoothed demand forecast $s_k$. Algorithm~\ref{alg:AggGCG_primal} breaks this connection by linearizing at $y_{k-1}$ to generate $x_k$, which we can interpret as the buyers responding to stale prices $y_{k-1}$ from round $k-1$. Similarly, the market response corresponds to the smoothed demand $s_{k-1}$ from the previous round. As a result, we lose the primal-dual connections between the latest price/demand pair that admit a natural certificate. 

\noindent\textbf{TAA (Three-Averages)}. The technical motivation for a third average was primarily geometric as visualized in Figure~\ref{fig:AggGCG_taa_geometry}(b). The move to TAA also admits a more interesting market interpretation as \textit{forecasting}. Each round $k$ of Algorithm~\ref{alg:three_avg_primal} begins by predicting the next set of prices $\tx_{k+1}$. Buyers respond to the forecast price signal with $\nabla f(\tx_{k+1})$, which is then incorporated into the smoothed demand $s_{k+1}$. The market computes the best-fit prices $x_{k+1}$ for demand $s_{k+1}$, which are then added to the smoothed price $y_{k+1}$. In this dynamic, buyers do not directly respond to the convergent price sequence $\{y_k\}$, but instead to the primal-side price forecasts $\{\tx_{k+1}\}$. We can then interpret TAA as a cautious forecast model, where both primal and dual-side forecasts are hedged by averaging. Moreover, with the additional forecast sequence $\{\tx_{k+1}\}$, TAA achieves market equilibrium at faster rates than MDA and AggGCG.

\section{Conclusion}\label{sec:conclusion}
In this work, we explore primal-dual correspondences in regularized convex optimization with a focus on certifiable optimality. Motivated by primal-dual correspondences in one and two averages, we propose the TAA method which leverages three averages to obtain near-optimal complexity for solving~\eqref{def:probIntro}. While TAA is new, we show that its dual counterpart is the well-studied GEM. We show that both TAA and the GEM obtain computable $\varepsilon$-certificates for primal-dual gap convergence with $\tO(\varepsilon^{-1/2})$ complexity, a novel result for the GEM to our knowledge. Our certificate guarantees further enhance the utility for the GEM as a subsidiary solver by providing upper bounds on a computable termination condition. Furthermore, we provide in-depth intuition and motivation for our results with concrete examples in Fisher markets and zero-sum matrix games.

There are a number of directions for future work. Our analysis relies on knowledge of the problem smoothness $L$. However, $L$ may be unknown at runtime,  particularly for large-scale problem instances. Future work on universal variants of TAA would substantially improve its scope of application. Similarly, our analysis focused on \textit{smooth} problems, whereas an increasing number of methods target the class of \textit{relatively smooth} problems~\cite{luRelativelySmoothConvex2018}. Recent work~\cite{laude2023dualities} has generalized the typical smoothness/strong-convexity duality relation to the relative case, suggesting that algorithmic correspondences may exist under these generalized relations.

\appendix
\section{Technical Results}\label{appdx:technical}

    We begin by providing the proof of Lemma~\ref{lem:reg_connection}, which connects the regularized problem~\eqref{def:probIntro_reg} to the true target~\eqref{def:probIntro}.\\
    \noindent\textbf{Proof of Lemma~\ref{lem:reg_connection}}: By Fenchel-Rockafellar duality, we have
        \[
            \psi^\alpha(s)\geq \min_{z\in\R^n}\{\psi^\alpha(z)\}\stackrel{\eqref{eq:fenchel_rockafellar_dual}}=-\phi^\alpha_*,
        \]
        which implies that
        \begin{equation}\label{ineq:primal_from_pd}
            \phi^\alpha(x)-\phi_*^\alpha\leq \phi^\alpha(x)+\psi^\alpha(s).
        \end{equation}
        Let $x_*\in\underset{x\in\dom h}\Argmin \phi(x)$. Note that, by the definition of $\phi^\alpha_*$ and the boundedness of $w(x)$ on $\dom h$,
        \begin{equation}\label{ineq:alpha_conn}
            \phi^\alpha_*\leq \phi(x_*)+\alpha w(x_*)\leq \phi_*+\alpha M.
        \end{equation}
        This inequality and the non-negativity of $w$ thus yield
        \begin{equation}
            \phi(x)-\phi_*-\alpha M\stackrel{\eqref{ineq:alpha_conn}}\leq \phi(x)+\alpha w(x)-\phi^\alpha_* \stackrel{\eqref{ineq:primal_from_pd}}\leq \phi^\alpha(x)+\psi^\alpha(s) \leq \frac{\varepsilon}{2},
        \end{equation}
        where the second inequality follows from \eqref{ineq:primal_from_pd}. By the choice $\alpha\leq \varepsilon/(2M)$, we obtain
        \begin{equation}
            \phi(x)-\phi_*\leq \frac{\varepsilon}{2}+\alpha M\leq \varepsilon,
        \end{equation}
        which completes the proof.\QEDA

Next, we recall technical results relating a closed convex function $f$ to its Fenchel dual $f^*$.
    \begin{lemma}[{\cite[Theorem 4.20]{Beck2017}}]\label{lem:beck_conj_pair}
        Let $f:\R^n\to(-\infty,\infty]$ be proper, closed, and convex. Then, the following are equivalent for any $x,\,y\in\R^n$:
        \begin{itemize}
            \item[{\rm a)}] $\inner{x}{y}=f(x)+f^*(y)$,
            \item[{\rm b)}] $y\in\partial f(x)$,
            \item[{\rm c)}] $x\in\partial f^*(y)$.
        \end{itemize}
    \end{lemma}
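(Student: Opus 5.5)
This is the standard Fenchel--Young characterization, and I would prove it by the cycle (a)$\Rightarrow$(b)$\Rightarrow$(a), then (a)$\Leftrightarrow$(c) via biconjugation.

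\emph{(a)$\Rightarrow$(b).} Assume $\inner{x}{y}=f(x)+f^*(y)$. Since $f$ is proper, $f^*(y)>-\infty$, and the right-hand side equals a finite number, so $x\in\dom f$. By the definition of the conjugate,
\[
f^*(y)\geq \inner{y}{u}-f(u)\quad\text{for all } u\in\R^n.
\]
Substituting $f^*(y)=\inner{x}{y}-f(x)$ and rearranging gives
\[
f(u)\geq f(x)+\inner{y}{u-x}\quad\text{for all } u,
\]
which is exactly $y\in\partial f(x)$.

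\emph{(b)$\Rightarrow$(a).} If $y\in\partial f(x)$, then $x\in\dom f$ and $\inner{y}{u}-f(u)\le \inner{y}{x}-f(x)$ for all $u$. Taking the supremum over $u$ gives $f^*(y)\le \inner{x}{y}-f(x)$. The reverse inequality (Fenchel--Young) follows by taking $u=x$ in the definition of $f^*$. Hence equality holds.

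\emph{(a)$\Leftrightarrow$(c).} Since $f$ is proper, closed and convex, $f^*$ is also proper, closed and convex, and $f^{**}=f$ by the identity recalled in the preliminaries. Applying the equivalence (a)$\Leftrightarrow$(b) already proved to $g:=f^*$, with the roles of the arguments swapped, gives
\[
x\in\partial f^*(y)\iff \inner{y}{x}=f^*(y)+f^{**}(x)=f^*(y)+f(x),
\]
which is (a).

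The only delicate points are checking that $f^*$ is proper, which is needed before applying the first part to it, and the finiteness bookkeeping in (a). Properness of $f^*$ holds because a proper closed convex function has an affine minorant, so $f^*$ is finite somewhere, and $f^*>-\infty$ everywhere because $f$ is proper. Closedness of $f^*$ is automatic, since it is a supremum of affine functions. Once these are in place, everything else is direct manipulation of the definitions.
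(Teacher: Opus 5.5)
Your proof is correct and is essentially the standard argument behind the cited result (Beck, Theorem 4.20), which the paper quotes without giving its own proof. As there, you get (a)$\Leftrightarrow$(b) directly from the definition of $f^*$ together with the Fenchel--Young inequality, and (a)$\Leftrightarrow$(c) by applying that equivalence to the proper closed convex function $f^*$ and using $f^{**}=f$.
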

    
    \begin{lemma}[{\cite[Corollary 4.21]{Beck2017}}]\label{lem:beck_subdiff_conj}
        Let $f:\R^n\to(-\infty,\infty]$ be proper, closed, and convex. Then, for any $x, y\in\R^n$:
        \begin{equation}
            \partial f(x)=\underset{z\in\R^n}\Argmax\{\inner{z}{x}-f^*(z)\},\quad \partial f^*(y)=\underset{u\in\R^n}\Argmax\{\inner{y}{u}-f(u)\}.
        \end{equation}
    \end{lemma}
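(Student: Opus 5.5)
The plan is to read both identities directly off the Fenchel--Young inequality together with the equality characterization in Lemma~\ref{lem:beck_conj_pair}. The only additional fact needed is biconjugation, $f=f^{**}$ for proper closed convex $f$, which was recalled in the preliminaries.

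First I will establish the second identity, since it is closest to the definitions. By definition of the conjugate, $f^*(y)=\sup_{u}\{\inner{y}{u}-f(u)\}$, so
\[
\inner{y}{u}-f(u)\leq f^*(y)\quad\text{for all } u .
\]
This is the Fenchel--Young inequality.

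I then split into two cases.
\begin{itemize}
\item If $f^*(y)<\infty$, a point $u$ lies in the $\Argmax$ exactly when $\inner{y}{u}=f(u)+f^*(y)$. By the equivalence (a)$\Leftrightarrow$(c) of Lemma~\ref{lem:beck_conj_pair}, this holds exactly when $u\in\partial f^*(y)$.
\item If $f^*(y)=+\infty$, then $\partial f^*(y)=\emptyset$, because subgradients are only defined at points of the domain. The $\Argmax$ is also empty: since $f$ is proper, $\inner{y}{u}-f(u)<\infty$ for every $u$, so no $u$ attains the value $+\infty$.
\end{itemize}
In both cases the two sets coincide.

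For the first identity I will use $f^{**}=f$ to write
\[
\sup_z\{\inner{z}{x}-f^*(z)\}=f(x).
\]
Fenchel--Young again gives $\inner{z}{x}-f^*(z)\leq f(x)$ for every $z$. The case split mirrors the one above.
\begin{itemize}
\item When $x\in\dom f$, $z$ is a maximizer iff $\inner{x}{z}=f(x)+f^*(z)$. By (a)$\Leftrightarrow$(b) of Lemma~\ref{lem:beck_conj_pair}, this holds iff $z\in\partial f(x)$.
\item When $x\notin\dom f$, both sets are empty. Here I need $f^*$ to be proper, so that $f^*(z)>-\infty$ for all $z$; then no $z$ attains $+\infty$.
\end{itemize}
Alternatively, the first identity follows by applying the second to the proper closed convex function $f^*$ and using $f^{**}=f$.

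The only step requiring any care is the degenerate case where the supremum is $+\infty$. There one must check that the $\Argmax$ is empty, which amounts to properness of $f$ and of $f^*$. Properness of $f^*$ for a proper closed convex $f$ is a standard fact; I will cite it rather than reprove it.
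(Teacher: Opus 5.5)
Your proof is correct and takes essentially the same route as the source: the paper gives no argument of its own and only cites Beck's Corollary 4.21, which is derived there from Fenchel--Young, the equality characterization in Lemma~\ref{lem:beck_conj_pair}, and $f^{**}=f$, exactly as you do. Your explicit handling of the degenerate cases $y\notin\dom f^*$ and $x\notin\dom f$ is a worthwhile addition; in the second case you only need $f^*>-\infty$, which follows directly from $\dom f\neq\emptyset$, so full properness of $f^*$ is not required.
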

    
    With basic duality results established, we now provide a proof of Lemma~\ref{lem:model_props}.\\
\noindent\textbf{Proof of Lemma~\ref{lem:model_props}}: Statements (a) and (b) follow by the definition of $\Gamma_k$ in~\eqref{def:ACP_model}, the convexity of $f$, and the $\alpha$-strong convexity of $h^\alpha$.

        \noindent c) Define $v_k=\argmin_{x\in\R^n}\Gamma_k(x)$. Then, by the definitions of $\Gamma_k$ and $s_k$ in~\eqref{def:ACP_model} and~\eqref{def:sk1}, respectively, we can show
        \begin{equation}
            0\in \partial \Gamma_k(v_k)=s_k+\partial h^\alpha(v_k),
        \end{equation}
        which implies that $-s_k\in\partial h^\alpha(v_k)$. By Lemma~\ref{lem:beck_conj_pair}, we obtain
        \begin{equation}\label{eq:pdcp_hdual}
            h^\alpha(v_k)+\inner{s_k}{v_k}=-(h^\alpha)^*(-s_k).
        \end{equation} 
        Similarly, we observe by Lemma~\ref{lem:beck_conj_pair} that
        \begin{equation}\label{eq:pdcp_fdual}
            f(x_{i})-\inner{\nabla f(x_{i})}{x_{i}}=-f^*(\nabla f(x_{i})),
        \end{equation}
        for all $i\in\{0,\dots,k-1\}$.
        Define the sequence of scalars $\{\chi_i\}_{i=0}^{k}$ recursively as 
        \[
        \chi_0=f(y_{0})-\inner{\nabla f(y_{0})}{y_{0}},\quad\chi_{i+1}=(1-\zeta_i)\chi_i + \zeta_i \bigl(f(x_{i})-\inner{\nabla f(x_{i})}{x_{i}}\bigr). 
        \]
        Expanding the inequality, using~\eqref{eq:pdcp_fdual} and the definition of $s_k$ in \eqref{def:sk1}, and applying the convexity of $f^*$ we obtain
        \begin{equation}\label{ineq:phi_conj}
            \chi_k \leq -f^*(s_k).
        \end{equation}
        Then, by the definitions of $v_k$ and $\Gamma_k$, we have 
        \[
        -\min_{x\in\R^n}\Gamma_k(x)=-\Gamma_k(v_k)=-\inner{s_k}{v_k}-h^\alpha(v_k)-\chi_k
        \stackrel{\eqref{eq:pdcp_hdual}\eqref{ineq:phi_conj}}\geq (h^\alpha)^*(-s_k)+f^*(s_k)=\psi^\alpha(s_k).
        \]
        Therefore, statement (c) immediately follows.\QEDA
        
    For completeness, we state the counterpart of Lemma~\ref{lem:model_props} for a dual ACP model. The proof is identical to the primal case, and is therefore omitted.
    
    \begin{lemma}\label{lem:model_props_dual}
        Let $\Gamma_k^*(\cdot)$ be the ACP model for~\eqref{eq:fenchel_rockafellar_dual} induced by $(z_0,\{g_i\}_{i=0}^{k-1},\xi)$ for $z_0\in\dom f^*$, $\{g_i\}_{i=0}^{k-1}\subseteq \dom f^*$ and $\xi\in[0,1]^k$. Then, the following statements hold:
        \begin{itemize}
            \item[{\rm a)}] for all $g\in\dom f^*$, $\Gamma_k^*(g)\leq \psi^\alpha(g)$;
            \item[{\rm b)}] $\Gamma_k^*$ is $(1/L)$-strongly convex.
        \end{itemize}
        Furthermore, define $\{v_k\}_{k\geq 0}$ as
            \[
            v_{0}=\nabla (h^\alpha)^*(-z_0),\quad v_{j+1}=(1-\xi_j)v_j+\xi_j \nabla (h^\alpha)^*(-g_{j}),
            \]
        for $0\leq j\leq k-1$. Then, the following bound holds for all $g\in \dom f^*$
        \begin{itemize}
            \item[{\rm c)}] $\phi^\alpha(v_k) + \psi^\alpha(g)\leq \psi^\alpha(g)-\min_{u\in\R^n}\Gamma_k^*(u)$.
        \end{itemize}
    \end{lemma}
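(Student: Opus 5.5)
The plan is to mirror the proof of Lemma~\ref{lem:model_props}, exchanging the roles of the two components. In the dual problem~\eqref{eq:fenchel_rockafellar_dual}, the smooth part is $q(g):=(h^\alpha)^*(-g)$, whose gradient is $\nabla q(g)=-\nabla (h^\alpha)^*(-g)$. The strongly convex part is $f^*$. Accordingly, I read the dual ACP model as
\[
\Gamma_0^*(g)=f^*(g)+\ell_q(g;z_0),\qquad \Gamma_{j+1}^*(g)=(1-\xi_j)\Gamma_j^*(g)+\xi_j\bigl(f^*(g)+\ell_q(g;g_j)\bigr).
\]
Unrolling the recursion, $\Gamma_k^*(g)=f^*(g)-\inner{v_k}{g}+\chi_k$. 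Here $v_k$ is exactly the averaged sequence in the statement, since each linearization contributes the slope $-\nabla (h^\alpha)^*(-g_j)$. The scalar $\chi_k$ is the same convex combination, with the same weights, of the constants $c_j:=q(g_j)+\inner{\nabla (h^\alpha)^*(-g_j)}{g_j}$, where the index $j$ runs over $z_0$ and the $g_j$.

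For parts (a) and (b), the argument is the same as in the primal case. Since $q$ is convex, $\ell_q(\cdot;g_j)\le q$, so each cut lies below $\psi^\alpha$, and hence so does any convex combination of cuts; this gives (a). Every cut consists of $f^*$ plus an affine function, and the weights sum to one, so $\Gamma_k^*$ is $f^*$ plus an affine function. It therefore inherits the $(1/L)$-strong convexity of $f^*$, which gives (b).

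For part (c), first let $u_k=\argmin\Gamma_k^*$, which exists and is unique by (b). The optimality condition $0\in\partial f^*(u_k)-v_k$ gives $v_k\in\partial f^*(u_k)$. By Lemma~\ref{lem:beck_conj_pair}, this yields
\[
f^*(u_k)-\inner{v_k}{u_k}=-f(v_k).
\]
Next, set $v'_j=\nabla (h^\alpha)^*(-g_j)$, so that $-g_j\in\partial h^\alpha(v'_j)$. Lemma~\ref{lem:beck_conj_pair} then gives $(h^\alpha)^*(-g_j)=-\inner{g_j}{v'_j}-h^\alpha(v'_j)$, and therefore $c_j=-h^\alpha(v'_j)$. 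Since $v_k$ is the same convex combination of the points $v'_j$, convexity of $h^\alpha$ gives
\[
\chi_k\le -h^\alpha(v_k).
\]
This is the analogue of~\eqref{ineq:phi_conj}. Combining the two displays,
\[
\min_u\Gamma_k^*(u)=f^*(u_k)-\inner{v_k}{u_k}+\chi_k\le -f(v_k)-h^\alpha(v_k)=-\phi^\alpha(v_k).
\]
Adding $\psi^\alpha(g)$ to both sides gives (c).

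The only step requiring care is the bookkeeping: the linearization of $q$ carries the sign flip from the composition with $-g$, so its slope is $-\nabla (h^\alpha)^*(-g_j)$. Handled that way, the coefficient of $g$ in $\Gamma_k^*$ is exactly $-v_k$, and the Fenchel–Young equality closes each constant. One caveat: $v_k$ must lie in $\dom h$ for $\phi^\alpha(v_k)$ to be finite. This holds because each $v'_j\in\dom\partial h^\alpha\subseteq\dom h$ and $\dom h$ is convex.
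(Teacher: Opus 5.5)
Your proposal is correct. It is exactly the dualized version of the paper's proof of Lemma~\ref{lem:model_props}, which the paper says carries over verbatim: Fenchel--Young equality for $f^*$ at the model minimizer $u_k$ replaces the one for $h^\alpha$, the cut constants become $-h^\alpha(\nabla (h^\alpha)^*(-g_j))$, and Jensen's inequality for $h^\alpha$ takes the place of the convexity of $f^*$. Your sign bookkeeping for the slope $-\nabla (h^\alpha)^*(-g_j)$ and your check that $v_k\in\dom h$ are both handled correctly.
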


    Finally, we provide a technical lemma that will be used in the analysis of Algorithm~\ref{alg:GEM-dual}.
    The statement is a slight generalization of~\cite[Lemma 3.2]{chenConvergenceAnalysisProximalLike2006} to the case of relative strong convexity and the case when $\nu$ is not necessarily differentiable. Non-differentiability does not affect our results, as our analysis of Algorithm~\ref{alg:GEM-dual} does not require any properties of the Bregman function aside from the 1-strong convexity of $\nu^*$.
    \begin{lemma}[Three-Points Inequality]\label{lem:three_point}
    Let $\Phi:\R^n\to(-\infty,\infty]$ and $\omega:\R^n\to(-\infty,\infty]$ be closed, proper, and convex functions with $\dom \Phi\subseteq\dom\omega$ and $\dom \Phi$ has a nonempty relative interior.
    Assume that $\Phi$ is $\mu$-strongly convex relative to $\omega$, that is, $\Phi(\cdot)-\mu\omega(\cdot)$ is a convex function. Fix $x_0\in \dom \Phi$, $\beta>0$ and define
    \begin{equation}
        x^+ = \underset{x\in\R^n}\argmin\left\{\Phi(x)+\beta\D_\omega(x\|x_0)\right\},
    \end{equation}
    where $\D_\omega(x\|x_0)$ is defined using an arbitrary $\omega'(x_0)\in\partial \omega(x_0)$.
    Then, for any $u\in\dom \Phi$,
    \begin{equation}\label{def:three_point}
         \Phi(x^+)+\beta\D_\omega(x^+\|x_0)+(\beta+\mu)\D_\omega(u\|x^+)\leq  \Phi(u)+\beta\D_\omega(u\|x_0),
    \end{equation}
    where $\D_\omega(\cdot\|x^+)$ is defined using some suitable $\omega'(x^+)\in\partial \omega(x^+)$.
\end{lemma}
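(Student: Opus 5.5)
The plan is to use the optimality condition of $x^+$ together with the relative strong convexity of $\Phi$, in the form of a subgradient inequality.

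\textbf{Step 1 (optimality condition).} Since $\dom\Phi$ has nonempty relative interior and $\dom\Phi\subseteq\dom\omega$, the subdifferential sum rule applies to $\Phi+\beta\D_\omega(\cdot\|x_0)$. Here $\D_\omega(\cdot\|x_0)=\omega(\cdot)-\omega(x_0)-\inner{\omega'(x_0)}{\cdot-x_0}$. The condition $0\in\partial\Phi(x^+)+\beta(\partial\omega(x^+)-\omega'(x_0))$ therefore gives $p\in\partial\Phi(x^+)$ and $q\in\partial\omega(x^+)$ with
\[
p+\beta(q-\omega'(x_0))=0 .
\]
We take $\omega'(x^+):=q$ as the ``suitable'' subgradient defining $\D_\omega(\cdot\|x^+)$.

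\textbf{Step 2 (relative strong convexity).} I would split off the convex part $\Phi-\mu\omega$. This is the main delicacy: a subgradient of $\Phi$ need not decompose as a subgradient of $\Phi-\mu\omega$ plus $\mu$ times a subgradient of $\omega$. To avoid that issue, I would apply the optimality argument to
\[
\Psi:=(\Phi-\mu\omega)+\mu\omega+\beta\D_\omega(\cdot\|x_0),
\]
viewed as the sum of the convex function $\Phi-\mu\omega$ and the convex function $(\mu+\beta)\omega-\beta\inner{\omega'(x_0)}{\cdot}$. The sum rule again applies under the relative-interior assumption, since the domain of $\Phi-\mu\omega$ equals $\dom\Phi$. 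It yields $r\in\partial(\Phi-\mu\omega)(x^+)$ and $q\in\partial\omega(x^+)$ with
\[
r+(\mu+\beta)q-\beta\omega'(x_0)=0 .
\]
With this $q$ as $\omega'(x^+)$, we have for any $u$:
\[
(\Phi-\mu\omega)(u)\ge(\Phi-\mu\omega)(x^+)+\inner{r}{u-x^+}.
\]

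\textbf{Step 3 (algebra).} Substitute $r=\beta\omega'(x_0)-(\mu+\beta)q$ and add $\mu\omega(u)-\mu\omega(x^+)$ to both sides. Then expand the three Bregman terms $\D_\omega(u\|x_0)$, $\D_\omega(x^+\|x_0)$ and $\D_\omega(u\|x^+)$, the last one defined with $q$. Using the three-point identity
\[
\D_\omega(u\|x_0)-\D_\omega(x^+\|x_0)-\D_\omega(u\|x^+)=\inner{q-\omega'(x_0)}{u-x^+},
\]
the inequality rearranges to $\Phi(u)+\beta\D_\omega(u\|x_0)\ge\Phi(x^+)+\beta\D_\omega(x^+\|x_0)+(\beta+\mu)\D_\omega(u\|x^+)$. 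This is exactly~\eqref{def:three_point}.

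The main obstacle is Step 2, specifically justifying the subdifferential sum rule and the existence of a common $q$ when $\omega$ is nondifferentiable. I would handle it with the relative-interior assumption and the standard qualification condition (e.g.\ Beck, Theorem 3.36), and use the freedom allowed in the statement to choose $\omega'(x^+)=q$.
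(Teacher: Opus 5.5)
Your argument is the paper's: write the objective as $(\beta+\mu)\omega$ plus a convex remainder, use the subdifferential sum rule at $x^+$ to get $q\in\partial\omega(x^+)$ with a matching subgradient of the remainder, and rearrange that subgradient inequality. Whether the linear term $-\beta\inner{\omega'(x_0)}{\cdot}$ is grouped with $\omega$ (your Step 2) or with the remainder (the paper) is cosmetic. Your Step 1 is superfluous, and your Step 3 algebra is correct.

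The step that is not actually justified is the sum rule itself, and this is equally true of the paper's proof. The sum rule needs $\mathrm{ri}(\dom\Phi)\cap\mathrm{ri}(\dom\omega)\neq\emptyset$. Noting that $\Phi-\mu\omega$ has domain $\dom\Phi$ only addresses one of the two sets. The hypothesis that $\dom\Phi$ has nonempty relative interior is vacuous, since every nonempty convex set has one. And $\dom\Phi\subseteq\dom\omega$ still allows $\dom\Phi$ to lie in the relative boundary of $\dom\omega$, where the conclusion can genuinely fail. For instance:
\begin{itemize}
\item take $\omega(x)=-\sqrt{x_1x_2}$ on $\R^2_+$ ($+\infty$ elsewhere) and $\Phi(x)=-x_2+\delta_{\{0\}\times[0,1]}(x)$;
\item since $\omega\equiv 0$ on $\dom\Phi$, $\Phi-\mu\omega=\Phi$ is convex for every $\mu\geq 0$;
\item take $\beta=1$, $x_0=(0,0)$, and $\omega'(x_0)=(-\tfrac12,-\tfrac12)\in\partial\omega(x_0)$, which holds by AM--GM.
\end{itemize}
Then $\Phi+\D_\omega(\cdot\|x_0)$ equals $-t/2$ at $(0,t)$, so $x^+=(0,1)$. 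Yet $\partial\omega(0,1)=\emptyset$, because $-\sqrt{h}\geq g_1 h$ fails as $h\downarrow 0$, so no admissible $\omega'(x^+)$ exists.

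The fix is to assume the qualification $\mathrm{ri}(\dom\Phi)\cap\mathrm{ri}(\dom\omega)\neq\emptyset$ explicitly. It holds in the paper's application: there $\dom\Phi=\dom f^*$, and $\dom\nu^*=\dom f^*$ for $\nu^*=Lf^*$, while $\dom\nu^*=\R^n$ for $\nu^*=\tfrac12\|\cdot\|_*^2$. Under this hypothesis your argument is complete.
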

\begin{proof}
    Define $\varphi(\cdot)= \Phi(\cdot)+\beta\D_\omega(\cdot\|x_0)$. By the optimality condition on $x^+$, we have $0\in\partial \varphi(x^+)$. Since $ \Phi(\cdot)-\mu\omega(\cdot)$ is convex, we have that $\varphi(\cdot)-(\beta+\mu)\omega(\cdot)$ is convex. By~\cite[Theorem 3.40]{Beck2017}, we have for all $x\in\dom \varphi$,
    \[
        (\beta+\mu)\partial \omega(x) + \partial [\varphi(\cdot)-(\beta+\mu)\omega(\cdot)](x)= \partial \varphi(x).
    \]
    Then, using that $0\in\partial \varphi(x^+)$ and rearranging, we have
    \begin{equation}
         -(\beta+\mu) \omega'(x^+)\in\partial (\varphi(\cdot)-(\beta+\mu)\omega(\cdot))(x^+)
    \end{equation}
    for some $\omega'(x^+)\in\partial \omega(x^+)$. It follows from the definition of the subdifferential that
    \begin{equation}
        \varphi(x^+)-(\beta+\mu)\omega(x^+)- (\beta+\mu)\inner{\omega'(x^+)}{u-x^+}\leq \varphi(u)-(\beta+\mu)\omega(u) .
    \end{equation}
    Substituting the definition of $\varphi(\cdot)$ and noting
    \begin{equation}
       \omega(u)-\omega(x^+)-\inner{\omega'(x^+)}{u-x^+}=\D_\omega(u\|x^+)
    \end{equation}
    gives the result.
\end{proof}

\section{One-Average Analysis}\label{appdx:one_avg}
We begin by providing a proof of the algorithm correspondence in Proposition~\ref{prop:1avg_duality}.\\
\noindent\textbf{Proof of Proposition~\ref{prop:1avg_duality}}: First, note that if $s_k=z_k$ for $k\geq 0$, then
        \begin{align}
            \nonumber x_{k}&=\underset{x\in\R^n}\argmin\left\{\inner{s_k}{x}+h^\alpha(x)\right\}\\
            &=\underset{x\in\R^n}\argmax\left\{\inner{-s_k}{x}-h^\alpha(x)\right\}=\nabla (h^\alpha)^*(-s_k)=\nabla (h^\alpha)^*(-z_k)\label{eq:xk_equiv}.
        \end{align}
        Similarly, by the optimality condition on $\bar z_k$, the convexity of $f$ and Lemma~\ref{lem:beck_subdiff_conj}, we have
        \begin{align}
            \bar z_k=\underset{z\in\R^n}\argmax\{\inner{\nabla (h^\alpha)^*(-z_k)}{z}-f^*(z)\}=\nabla f(\nabla (h^\alpha)^*(-z_k))\stackrel{\eqref{eq:xk_equiv}}=\nabla f(x_k).\label{eq:bar_zk_equiv}
        \end{align}
        Therefore, it is sufficient to prove that $s_k=z_k$ for all $k\geq 0$, which we will show by induction. The base case is trivial by our choice of $z_0=\nabla f(y_0)=s_0$. Now, assume $k\geq 0$ and the inductive hypothesis $s_{k}=z_k$, which implies~\eqref{eq:xk_equiv} and~\eqref{eq:bar_zk_equiv}. Then,
        \begin{equation}
            z_{k+1}=(1-\eta)z_k+\eta\bar z_k\stackrel{\eqref{eq:bar_zk_equiv}}=(1-\eta)s_k+\eta\nabla f(x_k)=s_{k+1},
        \end{equation}
        where the second equality follows from the inductive hypothesis and~\eqref{eq:bar_zk_equiv}. We thus complete the proof.~\QEDA
\subsection{Analysis of MDA}
    In this subsection, we prove the MDA certificate complexity claimed in Theorem~\ref{thm:da_convergence}.

    To begin, we define $\Gamma_k$ as the ACP model induced by $(y_0,\{x_i\}_{i=0}^{k-1},\{\eta\}^k)$. That is, $\Gamma_0(\cdot)=\ell_f(\cdot;y_0)+h^\alpha(\cdot)$ and for all $k\geq 0$, 
    \begin{equation}\label{def:gamma}
        \Gamma_{k+1}(\cdot)=(1-\eta)\Gamma_k(\cdot)+\eta\gamma_k(\cdot), \quad \gamma_k(\cdot)=\ell_f(\cdot;x_{k}) + h^{\alpha}(\cdot).
    \end{equation}
    Observe that, by construction, we have for all $x\in\dom h$
    \[
    s_{k}+\partial h^\alpha(x)=\partial \Gamma_k(x),
    \]
    hence $x_k=\argmin_{x\in\R^n}\Gamma_k(x)$ by the optimality conditions on $x_{k}$ in Algorithm~\ref{alg:PDCP}. For convenience, we define the quantities for $k\geq 0$
    \begin{equation}\label{def:mtk}
        m_k = \min_{x\in\R^n}\Gamma_{k}(x)=\Gamma_{k}(x_k),\quad t_k = \phi^\alpha(y_{k})-m_k,
    \end{equation}
    where
    \begin{equation}
    y_{k+1}=(1-\eta)y_k+\eta x_{k+1}.
    \end{equation}
    Observe that $y_k$ does not directly include the point $x_0$, however the dual average $s_k$ includes gradient information from $x_0$.

    \noindent\textbf{Proof of Theorem~\ref{thm:da_convergence}}: By the definition of $\Gamma_{k+1}$ and $m_{k+1}$, for $k\geq 0$ we obtain
        \begin{align}
            \nonumber m_{k+1}&=(1-\eta)\Gamma_{k}(x_{k+1}) + \eta\gamma_k(x_{k+1})\\
            &\geq (1-\eta)m_k + \eta\gamma_k(x_{k+1})+\frac{(1-\eta)\alpha}{2}\|x_k-x_{k+1}\|^2\tag{i}\label{proof:pdcp_sc}\\
            \nonumber &=(1-\eta)m_k + \eta\left[\gamma_k(x_{k+1})+\frac{(1-\eta)\alpha}{2\eta}\|x_k-x_{k+1}\|^2\right]\\
            &=(1-\eta)m_k + \eta\left[\gamma_k(x_{k+1})+\frac{L}{2}\|x_k-x_{k+1}\|^2\right]\tag{ii}\label{proof:pdcp_Lsub}\\
            & \geq (1-\eta)m_k + \eta\phi^\alpha(x_{k+1})\tag{iii}\label{proof:pdcp_Lsmooth},
        \end{align}
        where~\eqref{proof:pdcp_sc} follows by the 1-strong convexity of $w$,~\eqref{proof:pdcp_Lsub} by the choice $\eta=\alpha/(\alpha+L)$, and~\eqref{proof:pdcp_Lsmooth} by the $L$-smoothness of $f$. Then, expanding the inequality from $1$ to $k+1$, we have
        \begin{align*}
            m_{k+1}&\geq (1-\eta)^{k+1}m_0+\eta\sum_{i=1}^{k+1}(1-\eta)^{k+1-i}\phi^\alpha(x_{i})\\
            &=-(1-\eta)^{k+1}t_0+(1-\eta)^{k+1}\phi^\alpha(y_0)+\eta\sum_{i=1}^{k+1}(1-\eta)^{k+1-i}\phi^\alpha(x_{i})\\
            &\geq -(1-\eta)^{k+1}t_0 + \phi^\alpha(y_{k+1}),
        \end{align*}
        where the second line follows by the definition of $t_0$, and the third by the convexity and the definition of $y_{k+1}$ and induction.
        Therefore, we obtain
        \begin{equation}
            t_{k+1} = \phi^\alpha(y_{k+1})-m_{k+1} \leq (1-\eta)^{k+1}t_0.
        \end{equation}
       Since $1-\eta=L/(\alpha+L)=(1+\alpha/L)^{-1}$, we have for all $k\geq 0$
        \[
            \phi^\alpha(y_{k})-\min_{x\in\R^n}\Gamma_k(x)\leq \frac{t_0}{\left(1+\frac{\alpha}{L}\right)^{k}}.
        \]
        As a result, we obtain a point $y_k$ satisfying $\phi^\alpha(y_{k})-\min_{x\in\dom h}\Gamma_k(x)\leq \varepsilon/2$ after
        \begin{equation}
            k=\mathcal{O}\left(1+\frac{L}{\alpha}\log \left(\frac{t_0}{\varepsilon}\right)\right)
        \end{equation}
        iterations.
        Choosing $\alpha=\varepsilon/(2M)$ with Definition~\ref{def:pd_cert} yields the complexity result.\QEDA
        
    \subsection{Analysis of GCG}
    In this subsection, we prove the complexity bound stated in Theorem~\ref{thm:CG-cmplx}.
    
    We begin by defining the Wolfe gap for \eqref{eq:fenchel_rockafellar_dual} as
    \begin{equation}\label{def:wolfe_gap}
        S(z)=\max_{v\in\R^n}\left\{\inner{-\nabla (h^\alpha)^*(-z)}{z-v}+f^*(z)-f^*(v)\right\}.
    \end{equation}
    
    The following lemma provides a useful relation between the Wolfe gap and the primal-dual gap.
    \begin{lemma}\label{lem:wolfe_gap_pd}
        For $z\in\dom f^*$, define $y=\nabla (h^\alpha)^*(-z)$. Then, we have
        \begin{equation}
            S(z)=\psi^\alpha(z)+\phi^\alpha(y).
        \end{equation}
    \end{lemma}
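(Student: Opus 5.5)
The plan is to evaluate the maximum defining $S(z)$ in closed form via conjugate duality and then identify each term with pieces of $\psi^\alpha(z)$ and $\phi^\alpha(y)$.

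First, separate the terms independent of $v$ and write
\[
S(z)=\inner{-\nabla (h^\alpha)^*(-z)}{z}+f^*(z)+\max_{v\in\R^n}\left\{\inner{y}{v}-f^*(v)\right\}
=-\inner{y}{z}+f^*(z)+f^{**}(y).
\]
Here we use $y=\nabla (h^\alpha)^*(-z)$ and the definition of the conjugate. Since $f$ is closed, proper and convex, $f^{**}=f$, so the bracket equals $f(y)$.

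Next, eliminate the cross term $\inner{y}{z}$ using the conjugate pair $(h^\alpha,(h^\alpha)^*)$. From $y=\nabla (h^\alpha)^*(-z)$, Lemma~\ref{lem:beck_conj_pair} (equivalence of (a) and (c), applied to the closed proper convex function $h^\alpha$) gives
\[
\inner{y}{-z}=h^\alpha(y)+(h^\alpha)^*(-z),
\]
that is, $-\inner{y}{z}=h^\alpha(y)+(h^\alpha)^*(-z)$. Substituting this into the previous display yields
\[
S(z)=h^\alpha(y)+(h^\alpha)^*(-z)+f^*(z)+f(y)=\phi^\alpha(y)+\psi^\alpha(z),
\]
by the definitions of $\phi^\alpha$ in~\eqref{def:probIntro_reg} and $\psi^\alpha$ in~\eqref{eq:fenchel_rockafellar_dual}.

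The computation itself is routine. The main point to check is well-posedness. The gradient $\nabla (h^\alpha)^*$ exists everywhere because $h^\alpha$ is $\alpha$-strongly convex, so $(h^\alpha)^*$ is differentiable with $(1/\alpha)$-Lipschitz gradient. Also, $y\in\dom h$, so $\phi^\alpha(y)$ is finite. The assumption $z\in\dom f^*$ ensures that $f^*(z)$ is finite, which prevents an $\infty-\infty$ issue when splitting the maximization. With these observations, every step is an equality, and the lemma follows.
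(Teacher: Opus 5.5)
Your proposal is correct and follows essentially the same route as the paper's proof. Both pull out the $v$-independent terms, identify $\max_v\{\inner{y}{v}-f^*(v)\}=f(y)$, and apply the Fenchel--Young equality of Lemma~\ref{lem:beck_conj_pair} to the pair $(y,-z)$ for $h^\alpha$ to replace $-\inner{y}{z}$ by $h^\alpha(y)+(h^\alpha)^*(-z)$. Your well-posedness remarks (differentiability of $(h^\alpha)^*$, $y\in\dom h$, finiteness of $f^*(z)$) are harmless additions that the paper leaves implicit.
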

    \begin{proof}
        Using \eqref{def:wolfe_gap} and the definition of $y$, we have
        \begin{align*}
            S(z) &\stackrel{\eqref{def:wolfe_gap}}= \max_{v\in\R^n}\{\inner{-y}{z-v}+f^*(z)-f^*(v)\}\\
            &=f^*(z)+\inner{y}{-z}+\max_{v\in\R^n}\{\inner{y}{v}-f^*(v)\}\\
            &=f^*(z)+h^\alpha(y)+(h^\alpha)^*(-z)+f(y)\\
            &\stackrel{\eqref{def:probIntro_reg}, \eqref{eq:fenchel_rockafellar_dual}}=\psi^\alpha(z)+\phi^\alpha(y),
        \end{align*}
        where the third line follows by Lemma~\ref{lem:beck_conj_pair}(a) and the fourth line follows by the definitions of $\phi^\alpha$ and $\psi^\alpha$ in \eqref{def:probIntro_reg} and \eqref{eq:fenchel_rockafellar_dual}, respectively.
    \end{proof}

    The Wolfe gap therefore acts as a primal-dual certificate of optimality, as has been shown in numerous prior works~\cite{liangPrimaldualProximalBundle2025a,jaggiRevisitingFrank2013}. Accordingly, we can explicitly connect the Wolfe gap~\eqref{def:wolfe_gap} to the notion of an SCP model, as the following lemma shows.
    For notational convenience, we define the linearization $\ell_{(h^\alpha)^*}(\cdot;z)$ as
    \begin{equation}\label{def:ell}
        \ell_{(h^\alpha)^*}(\cdot;z)=(h^\alpha)^*(-z)+\inner{-\nabla (h^\alpha)^*(-z)}{\cdot-z}.
    \end{equation}
    \begin{lemma}\label{lem:wolfe_gap_to_acp}
        Fix $z\in\dom f^*$. Then, we have
        \begin{equation}\label{eq:wolfe_gap_to_acp}
            S(z)=\psi^\alpha(z)-\min_{v\in\R^n}\Gamma^*(v),
        \end{equation}
        where $\Gamma^*(\cdot)=\ell_{(h^\alpha)^*}(\cdot;z)+f^*(\cdot)$ is the SCP model of~\eqref{eq:fenchel_rockafellar_dual} induced by $z$.
    \end{lemma}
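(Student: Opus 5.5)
The plan is a direct computation: expand $\min_v \Gamma^*(v)$ via conjugates and compare the result with the expression for $S(z)$ in \eqref{def:wolfe_gap}. Write $y=\nabla (h^\alpha)^*(-z)$. By \eqref{def:ell}, the SCP model is
\[
\Gamma^*(v)=(h^\alpha)^*(-z)+\inner{-y}{v-z}+f^*(v),
\]
so that
\[
\min_{v\in\R^n}\Gamma^*(v)=(h^\alpha)^*(-z)+\inner{y}{z}-\max_{v\in\R^n}\{\inner{y}{v}-f^*(v)\}=(h^\alpha)^*(-z)+\inner{y}{z}-f(y),
\]
where the last equality uses $f=(f^*)^*$ for the closed proper convex $f$.

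Next, compute $\psi^\alpha(z)-\min_v\Gamma^*(v)$ using $\psi^\alpha(z)=(h^\alpha)^*(-z)+f^*(z)$. The $(h^\alpha)^*(-z)$ terms cancel, leaving
\[
\psi^\alpha(z)-\min_v\Gamma^*(v)=f^*(z)-\inner{y}{z}+f(y).
\]
This is exactly the second line in the computation of $S(z)$ in the proof of Lemma~\ref{lem:wolfe_gap_pd}, namely $f^*(z)+\inner{y}{-z}+\max_v\{\inner{y}{v}-f^*(v)\}$, again using $(f^*)^*=f$. Alternatively, one can rewrite the right-hand side of \eqref{def:wolfe_gap} directly as $\max_v\{-\Gamma^*(v)\}+\psi^\alpha(z)$, since $\inner{-y}{z-v}+f^*(z)-f^*(v)=\psi^\alpha(z)-\Gamma^*(v)$. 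This identity is checked by expanding $\Gamma^*(v)$ and cancelling $(h^\alpha)^*(-z)$, and it gives \eqref{eq:wolfe_gap_to_acp} in a single line.

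I expect no real obstacle here; the argument is bookkeeping of signs. The only points that need care are the following.
\begin{itemize}
\item The sign convention in \eqref{def:ell}: the gradient of $v\mapsto (h^\alpha)^*(-v)$ is $-\nabla(h^\alpha)^*(-v)$. This is what makes the linear term $\inner{-y}{v-z}$, matching the definition of $S$.
\item Differentiability of $(h^\alpha)^*$, which holds because $h^\alpha$ is $\alpha$-strongly convex. This makes $y$ well defined.
\item The fact that the maximum defining $S(z)$ is attained and finite, which follows from $f$ being finite at $y\in\dom h$.
\end{itemize}
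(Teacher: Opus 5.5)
Your proposal is correct. Your one-line alternative is exactly the paper's proof: after cancelling $(h^\alpha)^*(-z)$ you get $\inner{-y}{z-v}+f^*(z)-f^*(v)=\psi^\alpha(z)-\Gamma^*(v)$, so $S(z)=\max_v\{\psi^\alpha(z)-\Gamma^*(v)\}$. Your main route through $f=(f^*)^*$ is also valid but unnecessary, because the identity is a pure rearrangement and does not require evaluating either extremum in closed form.
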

    \begin{proof}
        By the SCP construction in Definition~\ref{def:acp_model} with $k=0$, $\Gamma^*$ has the simplified, one-cut form
        \begin{equation}\label{def:Gamma_star_z}
            \Gamma^*(\cdot)=\ell_{(h^\alpha)^*}(\cdot;z)+f^*(\cdot).
        \end{equation}
        Then,
        \begin{align*}
            \psi^\alpha(z)-\min_{v\in\R^n} \Gamma^*(v)&\stackrel{\eqref{def:Gamma_star_z}}=\psi^\alpha(z)-\min_{v\in\R^n} \{(h^\alpha)^*(-z)+\inner{-\nabla (h^\alpha)^*(-z)}{v-z}+f^*(v)\}\\
            &=f^*(z)-\min_{v\in\R^n} \{\inner{-\nabla (h^\alpha)^*(-z)}{v-z}+f^*(v)\}\\
            &=\max_{v\in\R^n} \{\inner{-\nabla (h^\alpha)^*(-z)}{z-v}+f^*(z)-f^*(v)\}\stackrel{\eqref{def:wolfe_gap}}=S(z),
        \end{align*}
        where the first equality follows by the definition of $\Gamma^*$ and the final equality by the definition of $S(z)$, which concludes the proof.
    \end{proof}
    The following lemma is adapted from~{\cite[Lemma 13.7]{Beck2017}} to the case where $f^*$ is $L^{-1}$-strongly convex with our choice of $\eta$.
     \begin{lemma}\label{lem:tau_cancelling_gcg}
        Choosing $\eta=\alpha/(L+\alpha)$, we have for all iterations $k\geq0$
        \begin{equation}
            \psi^\alpha(z_{k+1})\leq \psi^\alpha(z_k)-\eta S(z_k).
        \end{equation}
    \end{lemma}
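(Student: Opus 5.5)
We prove Lemma~\ref{lem:tau_cancelling_gcg} with the standard conditional gradient descent argument. We exploit the $(1/\alpha)$-smoothness of $(h^\alpha)^*$, which makes $z\mapsto (h^\alpha)^*(-z)$ $(1/\alpha)$-smooth in $\|\cdot\|_*$, and the $(1/L)$-strong convexity of $f^*$. Write $x_k=\nabla (h^\alpha)^*(-z_k)$. The gradient of $z\mapsto (h^\alpha)^*(-z)$ at $z_k$ is $-x_k$. By definition~\eqref{def:barz_gcg}, $\bar z_k$ maximizes $\inner{-x_k}{z_k-v}+f^*(z_k)-f^*(v)$ over $v$. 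Hence
\[
S(z_k)=\inner{-x_k}{z_k-\bar z_k}+f^*(z_k)-f^*(\bar z_k).
\]

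First, since $z_{k+1}-z_k=\eta(\bar z_k-z_k)$, the smoothness upper bound gives
\[
(h^\alpha)^*(-z_{k+1})\leq (h^\alpha)^*(-z_k)+\eta\inner{-x_k}{\bar z_k-z_k}+\frac{\eta^2}{2\alpha}\|\bar z_k-z_k\|_*^2 .
\]
Second, the $(1/L)$-strong convexity of $f^*$ along the segment gives
\[
f^*(z_{k+1})\leq (1-\eta)f^*(z_k)+\eta f^*(\bar z_k)-\frac{\eta(1-\eta)}{2L}\|\bar z_k-z_k\|_*^2 .
\]
Adding the two bounds, the linear terms combine into
\[
\psi^\alpha(z_{k+1})\leq \psi^\alpha(z_k)-\eta S(z_k)+\Bigl(\frac{\eta^2}{2\alpha}-\frac{\eta(1-\eta)}{2L}\Bigr)\|\bar z_k-z_k\|_*^2 .
\]

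Finally, the choice $\eta=\alpha/(L+\alpha)$ makes the quadratic coefficient vanish. Indeed, $\eta/\alpha=1/(L+\alpha)$ and $(1-\eta)/L=1/(L+\alpha)$, so the bracket equals $\frac{\eta}{2}\bigl(\frac{1}{L+\alpha}-\frac{1}{L+\alpha}\bigr)=0$. This gives the claimed bound. The only point needing care is the strong-convexity interpolation inequality for $f^*$, which may be nondifferentiable. It follows directly from the definition of $\mu$-strong convexity via Bregman divergences with a subgradient at $z_{k+1}$, applied to the two endpoints and then averaged. This is routine, so I do not expect a genuine obstacle.
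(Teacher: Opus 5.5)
Your proposal is correct and follows the paper's proof essentially step for step. Both arguments use the $(1/\alpha)$-smoothness upper bound on $(h^\alpha)^*(-\cdot)$ and the $(1/L)$-strong-convexity interpolation bound on $f^*$, cancel the quadratic coefficient via $\eta=\alpha/(L+\alpha)$, and identify the remaining linear term with $\eta S(z_k)$ through the definition of $\bar z_k$.
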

    \begin{proof}
        It follows from the $\alpha^{-1}$-smoothness of $(h^\alpha)^*(-\cdot)$ and the definition of $z_{k+1}$ in \eqref{def:z+} that
        \begin{align}
            (h^\alpha)^*(-z_{k+1})\leq& (h^\alpha)^*(-z_k)+\inner{-\nabla (h^\alpha)^*(-z_k)}{z_{k+1}-z_k} + \frac{1}{2\alpha}\|z_{k+1}-z_k\|_*^2\\
            \stackrel{\eqref{def:z+}}=&(h^\alpha)^*(-z_k)+\eta\inner{-\nabla (h^\alpha)^*(-z_k)}{ \bar{z}_k-z_k}+\frac{\eta^2}{2\alpha}\|\bar{z}_k-z_k\|_*^2.\label{ineq:gcg_ha_bound}
        \end{align}
        Similarly, by the $L^{-1}$-strong convexity of $f^*$, we have
        \begin{equation}
            f^*(z_{k+1})\leq f^*(z_k)+\eta(f^*(\bar{z}_k)-f^*(z_k)) -\frac{\eta(1-\eta)}{2L}\|z_k-\bar{z}_k\|_*^2.\label{ineq:gcg_fstar_bound}
        \end{equation}
        Combining the above bounds, we have
        \begin{align}
            \nonumber\psi^\alpha(z_{k+1})\stackrel{\eqref{eq:fenchel_rockafellar_dual}}=&(h^\alpha)^*(-z_{k+1})+f^*(z_{k+1})\\
            \stackrel{\eqref{ineq:gcg_ha_bound}\eqref{ineq:gcg_fstar_bound}}\leq& (h^\alpha)^*(-z_k)+\eta\inner{-\nabla (h^\alpha)^*(-z_k)}{ \bar{z}_k-z_k}+\frac{\eta^2}{2\alpha}\|\bar{z}_k-z_k\|_*^2\\
            \nonumber&+f^*(z_k)+\eta(f^*(\bar{z}_k)-f^*(z_k)) -\frac{\eta(1-\eta)}{2L}\|z_k-\bar{z}_k\|_*^2\\
            \nonumber=& \psi^\alpha(z_k) -\eta[\inner{-\nabla (h^\alpha)^*(-z_k)}{ z_k-\bar{z}_k}+f^*(z_k)-f^*(\bar{z}_k)]\\
            \nonumber&+\frac{1}{2}\left(\alpha^{-1}\eta^2-L^{-1} \eta(1-\eta)\right)\|z_k-\bar{z}_k\|_*^2\\
            =&\psi^\alpha(z_k) -\eta[\inner{-\nabla (h^\alpha)^*(-z_k)}{ z_k-\bar{z}_k}+f^*(z_k)-f^*(\bar{z}_k)],\label{ineq:zkp1_ineq_psi}
        \end{align}
        where the last equality follows from
        the choice $\eta=\alpha/(L+\alpha)$ and
        \[
            \alpha^{-1}\eta^2-L^{-1}\eta(1-\eta)=\frac{\alpha}{(L+\alpha)^2}-\frac{\alpha}{(L+\alpha)^2}= 0.
        \]
        Using \eqref{def:barz_gcg} and \eqref{def:wolfe_gap}, we have
        \[
        S(z_k)=\inner{-\nabla (h^\alpha)^*(-z_k)}{z_k-\bar{z}_k}+f^*(z_k)-f^*(\bar{z}_k),
        \]
        which together with \eqref{ineq:zkp1_ineq_psi} proves the claim of the lemma.
    \end{proof}
    Then, we have the following convergence guarantee.
    
    \begin{proposition}\label{prop:CG-converge} Define $\tilde y_0 = \nabla (h^{\alpha})^*(-z_0)$, and for all $k\geq 0$, 
    \begin{equation}\label{def:gcg_y}
    \tilde y_{k+1}=(1-\eta) \tilde y_{k}+ \eta \nabla (h^{\alpha})^*(-z_k).
    \end{equation}
    Then, for all $k\geq 0$, the primal-dual pair $(\tilde y_{k},z_{k})$ satisfies
    \begin{equation}
        \psi^\alpha(z_{k})+\phi^\alpha(\tilde y_{k})\leq\frac{\psi^\alpha(z_0)+\phi^\alpha(\tilde y_0)}{\left(1+\frac{\alpha}{L}\right)^{k}}.
    \end{equation}
    \end{proposition}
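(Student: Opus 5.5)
Combining Lemma~\ref{lem:tau_cancelling_gcg} with Lemma~\ref{lem:wolfe_gap_pd}, I will show the potential $G_k:=\psi^\alpha(z_k)+\phi^\alpha(\tilde y_k)$ contracts by $(1-\eta)=(1+\alpha/L)^{-1}$ per step. Write $x_k:=\nabla (h^\alpha)^*(-z_k)$. Then $\tilde y_0=x_0$ and $\tilde y_{k+1}=(1-\eta)\tilde y_k+\eta x_k$, so $\tilde y_{k+1}$ is a convex combination of $\tilde y_k$ and $x_k$.

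The first step is to apply Lemma~\ref{lem:wolfe_gap_pd} with $y=x_k$, which gives $S(z_k)=\psi^\alpha(z_k)+\phi^\alpha(x_k)$. Substituting this into Lemma~\ref{lem:tau_cancelling_gcg} yields
\[
\psi^\alpha(z_{k+1})\leq (1-\eta)\psi^\alpha(z_k)-\eta\,\phi^\alpha(x_k).
\]

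The second step uses the convexity of $\phi^\alpha$ together with the definition \eqref{def:gcg_y}:
\[
\phi^\alpha(\tilde y_{k+1})\leq (1-\eta)\phi^\alpha(\tilde y_k)+\eta\,\phi^\alpha(x_k).
\]
This requires $\tilde y_k, x_k\in\dom h$. That holds because $x_k$ lies in the range of $\nabla (h^\alpha)^*$, which is contained in $\dom h$, and $\tilde y_k$ is a convex combination of such points.

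Adding the two displays, the $\eta\,\phi^\alpha(x_k)$ terms cancel exactly, and we obtain $G_{k+1}\le(1-\eta)G_k$. Unrolling this recursion gives $G_k\le(1-\eta)^kG_0=G_0/(1+\alpha/L)^k$, which is the claim.

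No step is a serious obstacle, since the per-step descent is already contained in Lemma~\ref{lem:tau_cancelling_gcg}. The one point to check is the indexing: the average $\tilde y_{k+1}$ must incorporate $x_k=\nabla (h^\alpha)^*(-z_k)$, the same point at which the Wolfe gap $S(z_k)$ is evaluated. Otherwise the $\phi^\alpha(x_k)$ terms would not cancel, and this is exactly why the definition in \eqref{def:gcg_y} uses $z_k$ rather than $z_{k+1}$.
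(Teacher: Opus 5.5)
Your proposal is correct and follows essentially the same route as the paper: substitute $S(z_k)=\psi^\alpha(z_k)+\phi^\alpha(\nabla (h^\alpha)^*(-z_k))$ from Lemma~\ref{lem:wolfe_gap_pd} into the descent bound of Lemma~\ref{lem:tau_cancelling_gcg}, and use convexity of $\phi^\alpha$ along the update \eqref{def:gcg_y} so that the $\eta\,\phi^\alpha(x_k)$ terms cancel. Unrolling $G_{k+1}\le(1-\eta)G_k$ with $1-\eta=(1+\alpha/L)^{-1}$ then gives the claim, and your remark that $\tilde y_{k+1}$ must be built from the same point at which $S(z_k)$ is evaluated is exactly why the cancellation works.
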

    \begin{proof}
    By Lemma~\ref{lem:wolfe_gap_pd}, for $k\geq 0$ we have
    \begin{equation}\label{ineq:tau_gap_expand}
        -\eta S(z_k)=-\eta\psi^\alpha(z_k) -\eta\phi^\alpha(\nabla (h^\alpha)^*(-z_k))
        \stackrel{\eqref{def:gcg_y}}\leq -\phi^\alpha(\tilde y_{k+1})+(1-\eta)\phi^\alpha(\tilde y_k)-\eta\psi^\alpha(z_k),
    \end{equation}
    where the inequality follows by the convexity of $\phi^\alpha$ and the definition of $\tilde y_{k+1}$ in~\eqref{def:gcg_y}.
    Applying Lemma~\ref{lem:tau_cancelling_gcg}, we obtain
    \begin{equation}
        \psi^\alpha(z_{k+1})\leq \psi^\alpha(z_k)-\eta S(z_k)\stackrel{\eqref{ineq:tau_gap_expand}}\leq \psi^\alpha(z_k)-\phi^\alpha(\tilde y_{k+1})+(1-\eta)\phi^\alpha(\tilde y_k)-\eta\psi^\alpha(z_k),
    \end{equation}
    which implies
    \begin{equation}
        \psi^\alpha(z_{k+1})+\phi^\alpha(\tilde y_{k+1})\leq (1-\eta)[\psi^\alpha(z_k)+\phi^\alpha(\tilde y_k)].
    \end{equation}
    Recursively expanding the inequality from $0$ to $k-1$ yields
    \begin{equation}
        \psi^\alpha(z_k)+\phi^\alpha(\tilde y_k)\leq(1-\eta)^{k}[\psi^\alpha(z_0)+\phi^\alpha(\tilde y_0)].
    \end{equation}
    Using $(1-\eta)=L/(\alpha+L)=(1+\alpha/L)^{-1}$ gives the claimed convergence bound.
    \end{proof}
    We are now ready to prove Theorem~\ref{thm:CG-cmplx}.

    \noindent\textbf{Proof of Theorem~\ref{thm:CG-cmplx}}: First, note that by~\eqref{eq:fenchel_rockafellar_dual} and Proposition~\ref{prop:CG-converge}, we have for all $k\geq 0$
        \begin{equation}\label{ineq:fw_primal_gap}
            \psi^\alpha(z_{k})-\psi^\alpha_*\leq \psi^\alpha(z_{k})+\phi^\alpha(\tilde{y}_{k})\leq \frac{\psi^\alpha(z_{0})+\phi^\alpha(\tilde{y}_{0})}{(1+\frac{\alpha}{L})^{k}}.
        \end{equation}
        Then, by Lemma~\ref{lem:tau_cancelling_gcg}, we have
        \begin{equation}
            0\leq \psi^\alpha(z_{k+1})-\psi^\alpha_*\leq \psi^\alpha(z_{k})-\psi^\alpha_*-\eta S(z_k)\stackrel{\eqref{eq:wolfe_gap_to_acp}}=\psi^\alpha(z_{k})-\psi^\alpha_*-\eta \left(\psi^\alpha(z_{k})-\min_{v\in\R^n}\Gamma_k^*(v)\right).
        \end{equation}
        where the equality follows by Lemma~\ref{lem:wolfe_gap_to_acp}. Hence, we obtain
        \begin{equation}
            \psi^\alpha(z_{k})-\min_{v\in\R^n}\Gamma_k^*(v)\leq \frac{\psi^\alpha(z_{k})-\psi^\alpha_*}{\eta} \stackrel{\eqref{ineq:fw_primal_gap}}\leq\frac{\psi^\alpha(z_{0})+\phi^\alpha(\tilde{y}_{0})}{\eta(1+\frac{\alpha}{L})^{k}}.
        \end{equation}   
        By standard analysis and our choice of $\eta$, the complexity to obtain $\psi^\alpha(z_{k})-\min_{v\in\R^n}\Gamma_k^*(v)\leq\varepsilon/2$ is therefore
        \[
        k=\mathcal{O}\left(1+\frac{L}{\alpha}\log\left(\frac{(L+\alpha)(\psi^\alpha(z_{0})+\phi^\alpha(\tilde{y}_{0}))}{\alpha\varepsilon}\right)\right).
        \]
        The result follows by defining the dual certificate (analogous to Definition~\ref{def:pd_cert}) as the pair $(z_k,\Gamma_k^*)$
        and the choice $\alpha=\varepsilon/(2M)$.\QEDA
        
        \section{Two-Average Analysis}\label{appdx:two_avg}
     We begin by providing a formal proof of the self-duality claimed in Proposition~\ref{prop:self_dual}.\\
     \noindent\textbf{Proof of Proposition~\ref{prop:self_dual}}: We prove that $y_k=v_k$ and $s_k=z_k$ for all $k\geq 0$ by induction. The base case $k=0$ follows by our initialization $s_0=z_0$ and $y_0=v_0$. Then, assume that for some $k\geq 0$, the equalities $s_{k}=z_{k}$ and $y_{k}=v_{k}$ hold. First, observe that by Lemma~\ref{lem:beck_subdiff_conj} and the smoothness of $(h^\alpha)^*$,
        \begin{equation}\label{eq:tx_equiv_2}
            x_{k+1}=\underset{x\in\R^n}\argmax\left\{-\inner{s_{k}}{x}-h(x)-\alpha w(x)\right\}=\nabla (h^\alpha)^*(-s_{k})=\nabla (h^\alpha)^*(-z_{k}).
        \end{equation}
       Then, by the choice of $v_{k+1}$ in Step \textbf{2}  of Algorithm~\ref{alg:AggGCG_dual}, we have
        \begin{equation}\label{eq:x_k_equiv}
            y_{k+1}=\eta x_{k+1}+(1-\eta)y_{k}\stackrel{\eqref{eq:tx_equiv_2}}=\eta\nabla (h^\alpha)^*(-z_{k})+(1-\eta)v_{k}=v_{k+1},
        \end{equation}
        where the second equality follows by~\eqref{eq:tx_equiv_2} and the inductive hypothesis. Then, we have
        \begin{equation}\label{eq:bar_zk_equiv_2}
            \bar z_{k+1}=\argmin\{\inner{-v_{k}}{z}+f^*(z)\}\stackrel{\eqref{eq:x_k_equiv}}=\argmax\{\inner{y_{k}}{z}-f^*(z)\}=\nabla f(y_{k}).
        \end{equation}
        Finally, the dual average satisfies
        \[
            z_{k+1}=\eta\bar z_{k+1} + (1-\eta)z_{k}\stackrel{\eqref{eq:bar_zk_equiv_2}}=\eta\nabla f(y_{k})+(1-\eta)z_{k}
            =\eta\nabla f(y_{k})+(1-\eta)s_{k}=s_{k+1},
        \]
        completing the inductive step.  We therefore conclude that the equivalence $(y_k,s_k)=(v_k,z_k)$ holds for all iterations $k\geq 0$. Then,~\eqref{eq:tx_equiv_2} and~\eqref{eq:bar_zk_equiv_2} imply $(x_{k+1},\nabla f(y_k))=(\nabla (h^\alpha)^*(-z_k),\bar z_{k+1})$ for all $k\geq 0$, concluding the proof.\QEDA

    With the correspondence proven, the rest of the section is devoted to proving the primal-dual gap convergence rates claimed in Theorem~\ref{thm:agg_GCG_conv}. Our argument and results are similar to previous work~\cite{zhao2023generalized}, hence we include the proofs primarily for completeness. For simplicity, we adopt the primal perspective of Algorithm~\ref{alg:AggGCG_primal}.
 \begin{lemma}\label{lem:gcg_per_step_duality}
        For all iterations $k\geq 0$ of Algorithm~\ref{alg:AggGCG_primal}, the following inequalities hold:
        \begin{itemize}
            \item[{\rm a)}]
            \begin{equation}
                f(y_{k+1})\leq (1-\eta)f(y_{k})-\eta f^*(\nabla f(y_{k}))+\eta \inner{\nabla f(y_{k})}{x_{k+1}}+\frac{L\eta ^2}{2}\|x_{k+1}-y_{k}\|^2;
            \end{equation}
            \item[{\rm b)}]
            \begin{equation}
                (h^\alpha)^*(-s_{k+1})\leq (1-\eta )(h^\alpha)^*(-s_{k})-\eta h^\alpha(x_{k+1})-\eta \inner{x_{k+1}}{\nabla f(y_{k})}+\frac{\eta ^2}{2\alpha}\|\nabla f(y_{k})-s_{k}\|_*^2;
            \end{equation}
            \item[{\rm c)}]
            \begin{equation}
                h^\alpha(y_{k+1})\leq \eta  h^\alpha(x_{k+1})+(1-\eta )h^\alpha(y_{k})-\frac{\alpha\eta (1-\eta )}{2}\|x_{k+1}-y_k\|^2;
            \end{equation}
            \item[{\rm d)}]
            \begin{equation}
                f^*(s_{k+1})\leq \eta  f^*(\nabla f(y_{k}))+(1-\eta )f^*(s_{k})-\frac{\eta (1-\eta )}{2L}\|\nabla f(y_{k})-s_{k}\|_*^2.
            \end{equation}
        \end{itemize}
    \end{lemma}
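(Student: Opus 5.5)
Each of the four inequalities follows from one smoothness or strong-convexity inequality applied along the segment defined by the averaging updates \eqref{def:y_agg_gcg}--\eqref{def:s_agg_gcg}. The gradient and argmin relations are then rewritten through Fenchel--Young equalities (Lemma~\ref{lem:beck_conj_pair}). The differences $y_{k+1}-y_k=\eta(x_{k+1}-y_k)$ and $s_{k+1}-s_k=\eta(\nabla f(y_k)-s_k)$ are what produce the $\eta^2$ quadratic terms in (a) and (b).

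For (a), we apply $L$-smoothness of $f$ at the base point $y_k$:
\[
f(y_{k+1})\le f(y_k)+\eta\inner{\nabla f(y_k)}{x_{k+1}-y_k}+\tfrac{L\eta^2}{2}\|x_{k+1}-y_k\|^2 .
\]
We split $f(y_k)=(1-\eta)f(y_k)+\eta f(y_k)$ and use Lemma~\ref{lem:beck_conj_pair}(a), i.e. $f(y_k)-\inner{\nabla f(y_k)}{y_k}=-f^*(\nabla f(y_k))$. The $\eta$ portion becomes $-\eta f^*(\nabla f(y_k))+\eta\inner{\nabla f(y_k)}{x_{k+1}}$, which gives (a).

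For (b), the function $u\mapsto (h^\alpha)^*(-u)$ is $\alpha^{-1}$-smooth in $\|\cdot\|_*$ with gradient $-\nabla (h^\alpha)^*(-u)$. At $u=s_k$ this gradient equals $-x_{k+1}$, by the identity $x_{k+1}=\nabla(h^\alpha)^*(-s_k)$ established in \eqref{eq:tx_equiv_2}. Hence
\[
(h^\alpha)^*(-s_{k+1})\le (h^\alpha)^*(-s_k)-\eta\inner{x_{k+1}}{\nabla f(y_k)-s_k}+\tfrac{\eta^2}{2\alpha}\|\nabla f(y_k)-s_k\|_*^2 .
\]
We split $(h^\alpha)^*(-s_k)$ into its $(1-\eta)$ and $\eta$ parts and use the conjugate pair relation $-s_k\in\partial h^\alpha(x_{k+1})$. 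Lemma~\ref{lem:beck_conj_pair}(a) then gives $(h^\alpha)^*(-s_k)+\inner{x_{k+1}}{s_k}=-h^\alpha(x_{k+1})$. The $\eta$ part therefore collapses to $-\eta h^\alpha(x_{k+1})-\eta\inner{x_{k+1}}{\nabla f(y_k)}$, which is (b).

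Parts (c) and (d) are the standard convex-combination forms of strong convexity. For (c), $h^\alpha$ is $\alpha$-strongly convex because $w$ is $1$-strongly convex on $\dom h$ and $h$ is convex; apply this to $y_{k+1}=(1-\eta)y_k+\eta x_{k+1}$, noting both points lie in $\dom h$. For (d), $f^*$ is $L^{-1}$-strongly convex in $\|\cdot\|_*$ by the $L$-smoothness of $f$; apply this to $s_{k+1}=(1-\eta)s_k+\eta\nabla f(y_k)$.

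The one point needing care is the domain of $f^*$ in (d): we need $s_k\in\dom f^*$. I would verify this by induction. It holds if $s_0$ is taken in $\dom f^*$ (e.g. $s_0=\nabla f(y_0)$), each $\nabla f(y_k)\in\dom f^*$, and $\dom f^*$ is convex. Otherwise (d) is trivially true when $f^*(s_k)=+\infty$. Beyond this, the argument is routine.
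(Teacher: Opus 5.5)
Your proposal is correct and takes the same approach as the paper. You apply $L$-smoothness of $f$ and $\alpha^{-1}$-smoothness of $(h^\alpha)^*(-\cdot)$ along the averaging segments, then use the Fenchel--Young equalities at $(y_k,\nabla f(y_k))$ and $(x_{k+1},-s_k)$ for (a) and (b), and the convex-combination form of strong convexity of $h^\alpha$ and $f^*$ for (c) and (d); your added check that $s_k\in\dom f^*$, or else that (d) holds trivially when $f^*(s_k)=+\infty$ since $\eta<1$, is a point the paper leaves implicit.
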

    \begin{proof}
       a) Since $f$ is $L$-smooth, we have
    \begin{align*}
        f(y_{k+1})\leq& f(y_{k}) + \inner{\nabla f(y_{k})}{y_{k+1}-y_{k}}+\frac{L}{2}\|y_{k+1}-y_{k}\|^2\\
        \stackrel{\eqref{def:y_agg_gcg}}=& f(y_{k}) + \eta \inner{\nabla f(y_{k})}{x_{k+1}-y_{k}}+\frac{L\eta ^2}{2}\|x_{k+1}-y_{k}\|^2\\
        =&f(y_{k}) -\eta \inner{\nabla f(y_{k})}{y_{k}}+ \eta \inner{\nabla f(y_{k})}{x_{k+1}}+\frac{L\eta ^2}{2}\|x_{k+1}-y_{k}\|^2\\
        =&(1-\eta )f(y_{k}) -\eta f^*(\nabla f(y_{k}))+ \eta \inner{\nabla f(y_{k})}{x_{k+1}}+\frac{L\eta ^2}{2}\|x_{k+1}-y_{k}\|^2,
    \end{align*}
    where the last line follows by Lemma~\ref{lem:beck_conj_pair} with $x=y_{k}$, $y=\nabla f(y_{k})$.

    \noindent b) Since $(h^\alpha)^*(-\cdot)$ is $\alpha^{-1}$-smooth, we can follow the same steps as in (a) to obtain
    \begin{align*}
       (h^\alpha)^*(-s_{k+1})\leq& (h^\alpha)^*(-s_{k})+\inner{-\nabla (h^\alpha)^*(-s_{k})}{s_{k+1}-s_{k}}+\frac{1}{2\alpha}\|s_{k+1}-s_{k}\|_*^2\\
       \stackrel{\eqref{def:s_agg_gcg}}=&(h^\alpha)^*(-s_{k})+\eta \inner{-\nabla (h^\alpha)^*(-s_{k})}{\nabla f(y_{k})-s_{k}}+\frac{\eta ^2}{2\alpha}\|\nabla f(y_{k})-s_{k}\|_*^2\\
       =&(h^\alpha)^*(-s_{k})+\eta \inner{\nabla (h^\alpha)^*(-s_{k})}{s_{k}}-\eta \inner{\nabla (h^\alpha)^*(-s_{k})}{\nabla f(y_{k})}+\frac{\eta ^2}{2\alpha}\|\nabla f(y_{k})-s_{k}\|_*^2\\
       =&(1-\eta )(h^\alpha)^*(-s_{k})-\eta  h^\alpha(x_{k+1})-\eta \inner{x_{k+1}}{\nabla f(y_{k})}+\frac{\eta ^2}{2\alpha}\|\nabla f(y_{k})-s_{k}\|_*^2,
    \end{align*}
    where the last line follows by Lemma~\ref{lem:beck_conj_pair} with $x=x_{k+1}$, $y=s_{k}$.

    \noindent Statement c) follows directly from the $\alpha$-strong convexity of $h^\alpha$ with respect to $\|\cdot\|$ and~\eqref{def:y_agg_gcg}, and statement d) follows from the $L^{-1}$-strong convexity of $f^*$ with respect to $\|\cdot\|_*$ and~\eqref{def:s_agg_gcg}.
    \end{proof}

    Using the smoothness/convexity bounds in Lemma~\ref{lem:gcg_per_step_duality}, we are now ready to prove Theorem~\ref{thm:agg_GCG_conv}.
    \noindent\textbf{Proof of Theorem~\ref{thm:agg_GCG_conv}}: Using the choice $\eta=\alpha/(L+\alpha)$ (hence $1-\eta=L/(L+\alpha)$), we obtain
    \begin{equation}
        L\eta^2-\eta(1-\eta)\alpha=\frac{L\alpha^2}{(L+\alpha)^2}-\frac{L\alpha^2}{(L+\alpha)^2}= 0,\label{eq:eta_cancel_primal}
    \end{equation}
    and therefore
    \begin{equation}
        \frac{\eta^2}{\alpha}-\frac{\eta(1-\eta)}{L}=\frac{1}{L\alpha}\left(L\eta^2-\eta(1-\eta)\alpha\right)\stackrel{\eqref{eq:eta_cancel_primal}}= 0.\label{eq:eta_cancel_dual}
    \end{equation}
    Then, from Lemma~\ref{lem:gcg_per_step_duality}(a) and (c), we have
        \begin{align}
            \nonumber \phi^\alpha(y_{k+1})=&f(y_{k+1})+h^\alpha(y_{k+1})\leq (1-\eta)\phi^\alpha(y_{k})-\eta f^*(\nabla f(y_{k}))+\eta h^\alpha(x_{k+1}) + \eta\inner{\nabla f(y_{k})}{x_{k+1}}\\
            \nonumber  &+\frac{1}{2}(L\eta^2-\alpha(1-\eta)\eta)\|x_{k+1}-y_{k}\|^2\\
            \stackrel{\eqref{eq:eta_cancel_primal}}=&(1-\eta)\phi^\alpha(y_{k})-\eta f^*(\nabla f(y_{k}))+\eta h^\alpha(x_{k+1}) + \eta\inner{\nabla f(y_{k})}{x_{k+1}}\label{ineq:primal_ineq_gcg},
        \end{align}
        and by Lemma~\ref{lem:gcg_per_step_duality}(b) and (d), we have
        \begin{align}
            \nonumber \psi^\alpha(s_{k+1})=&f^*(s_{k+1})+(h^\alpha)^*(-s_{k+1})\leq (1-\eta)\psi^\alpha(s_k)-\eta h^\alpha(x_{k+1})\\
           \nonumber  &+\eta f^*(\nabla f(y_{k})) - \eta\inner{x_{k+1}}{\nabla f(y_{k})}
            +\frac{1}{2}(\alpha^{-1}\eta^2-L^{-1}(1-\eta)\eta )\|\nabla f(y_{k})-s_k\|_*^2\\
            \stackrel{\eqref{eq:eta_cancel_dual}}=&(1-\eta)\psi^\alpha(s_k)-\eta h^\alpha(x_{k+1})+\eta f^*(\nabla f(y_{k})) - \eta\inner{x_{k+1}}{\nabla f(y_{k})}\label{ineq:dual_ineq_gcg}.
        \end{align}
        Summing~\eqref{ineq:primal_ineq_gcg} and~\eqref{ineq:dual_ineq_gcg} and canceling terms gives
        \[\phi^\alpha(y_{k+1})+\psi^\alpha(s_{k+1})\leq (1-\eta)[\phi^\alpha(y_{k})+\psi^\alpha(s_k)]\leq (1-\eta)^{k+1}[\phi^\alpha(y_{0})+\psi^\alpha(s_{0})].\]
        By standard analysis and our choice of $\eta$, the complexity to obtain $\phi^\alpha(y_{k})+\psi^\alpha(s_{k})\leq\varepsilon/2$ is therefore
        \[
        k=\mathcal{O}\left(1+\frac{L}{\alpha}\log\left(\frac{\phi^\alpha(y_{0})+\psi^\alpha(s_{0})}{\varepsilon}\right)\right),
        \] 
        which gives the first complexity bound in view of $\alpha=\varepsilon/(2M)$. The second complexity bound follows directly from Lemma~\ref{lem:reg_connection}.\QEDA
    
    \section{Three-Average Analysis}\label{appdx:three_avg}
    As in the previous two sections, we begin by proving the claimed primal-dual correspondence stated in Proposition~\ref{prop:3avg_duality}.
    
    \noindent\textbf{Proof of Proposition~\ref{prop:3avg_duality}}: 
        First, we observe that Lemma~\ref{lem:gem_technical}(a) and~\eqref{eq:gem_scalars} imply
        \[
        a_k=A_k\left(\frac{\alpha+\sqrt{\alpha^2 + 4\alpha L}}{2L}\right).
        \]
        Then, we obtain
        \begin{align*}
            \frac{a_k}{A_{k+1}}=\frac{\alpha+\sqrt{\alpha^2+4\alpha L}}{2L+\alpha +\sqrt{\alpha^2+4\alpha L}}=\frac{2\alpha}{\alpha + \sqrt{\alpha^2+4\alpha L}}\stackrel{\eqref{eq:x0_and_lambda}}=\lambda.
        \end{align*}
        It thus follows from Algorithm \ref{alg:three_avg_primal} that for all $k\geq0$,
        \begin{gather}
        \tx_{k+1}=\frac{A_k}{A_{k+1}}y_k + \frac{a_k}{A_{k+1}}x_{k},\quad y_{k+1}=\frac{A_k}{A_{k+1}}y_k + \frac{a_k}{A_{k+1}}x_{k+1};\label{eq:tx_y_equiv}\\
        \nonumber s_{k+1}=\frac{A_k}{A_{k+1}}s_k + \frac{a_k}{A_{k+1}}\nabla f(\tx_{k+1}).
        \end{gather}

        We now prove the correspondence $(g_{k},z_{k},v_k)=(\nabla f(\tx_k),s_k,x_k)$ holds for all iterations $k\geq 0$ by induction. The base case follows by our choice of initialization with $g_0=\nabla f(y_0)=\nabla f(\tx_0)$ and by Lemma~\ref{lem:beck_subdiff_conj} applied to $x_0$ as defined in~\eqref{eq:x0_and_lambda}. Now, assume that $(g_{k},z_{k},v_k)=(\nabla f(\tx_k),s_k,x_k)$ for some $k\geq0$ and define $x_{-1}=x_0$.

        Note that the subdifferential $\partial f^*(g_k)$ is not necessarily a singleton, therefore there is some ambiguity in defining $\D_{\nu^*}(g\|g_k)$ as used in Step \textbf{2} of Algorithm~\ref{alg:GEM-dual}. Since $\tx_k\in\partial f^*(g_k)$ by the inductive hypothesis and Lemma~\ref{lem:beck_conj_pair}, we choose $(f^*)'(g_k)=\tx_k$ for the linearization term, so
        \[
        \D_{\nu^*}(g\|g_k)=L\D_{f^*}(g\|g_k)=L\left[f^*(g)-f^*(g_k)-\inner{(f^*)'(g_k)}{g-g_k}\right].
        \]
        
        Then, by the optimality condition of~\eqref{def:g_gem} with $\nu^*=Lf^*$, for some $(f^*)'(g_{k+1})\in\partial f^*(g_{k+1})$ we have
        \begin{equation*}
            0=\frac{L}{\alpha}\left(\tau_k+\frac{\alpha}{L} a_k\right)(f^*)'(g_{k+1})- \frac{L\tau_k}{\alpha} (f^*)'(g_k)-a_k\hat v_k.
        \end{equation*}
        Now, we have $\tau_k + \alpha a_kL^{-1}=\tau_{k+1}=\alpha L^{-1}A_{k+1}$. Then, rearranging, we obtain
        \begin{align*}
            A_{k+1}(f^*)'(g_{k+1})&= L\alpha^{-1}\tau_{k+1}(f^*)'(g_{k+1})=L\alpha^{-1}\tau_k(f^*)'(g_k)+a_k\hat v_k\\
            &\stackrel{\eqref{eq:gem_hatx}}=L\alpha^{-1}\tau_k(f^*)'(g_k)+a_k v_k + a_{k-1}(v_k-v_{k-1})\\
            &=A_k\tx_k+a_k x_k + a_{k-1}(x_k-x_{k-1})\\
            &\stackrel{\eqref{eq:tx_y_equiv}}=A_{k}y_{k}+a_k x_k \stackrel{\eqref{eq:tx_y_equiv}}=A_{k+1}\tx_{k+1},
        \end{align*}
        where the second line follows from~\eqref{eq:gem_hatx}, the third line from the inductive hypothesis, and the fourth line from~\eqref{eq:tx_y_equiv}. Therefore $\tx_{k+1}\in\partial f^*(g_{k+1})$, hence by Lemma~\ref{lem:beck_conj_pair} we obtain
\begin{equation}\label{eq:tx_g_corresp}
            \nabla f(\tx_{k+1})=g_{k+1}.
        \end{equation}
        With this correspondence, we note that 
        \begin{equation}\label{eq:s_z_corresp_3avg}
            s_{k+1}=\frac{A_{k}}{A_{k+1}}s_{k}+\frac{a_{k}}{A_{k+1}}\nabla f(\tx_{k+1})\stackrel{\eqref{eq:tx_g_corresp}}=\frac{A_{k}}{A_{k+1}}z_{k}+\frac{a_{k}}{A_{k+1}}g_{k+1}=z_{k+1},
        \end{equation}
        where the equality follows by the inductive hypothesis and~\eqref{eq:tx_g_corresp}. Finally,~\eqref{def:x_3avg} and Lemma~\ref{lem:beck_subdiff_conj} imply that
        \begin{equation}
            x_{k+1}=\nabla (h^\alpha)^*(-s_{k+1})=\nabla (h^\alpha)^*(-z_{k+1})=v_{k+1}.
        \end{equation}
        We therefore have $(g_{k+1},z_{k+1},v_{k+1})=(\nabla f(\tx_{k+1}), s_{k+1}, x_{k+1})$, completing the proof.\QEDA

\subsection{Analysis of TAA}
Throughout this subsection, we define $\Gamma_{k}(\cdot)$ as the ACP model induced by $(y_0, \{\tx_{i+1}\}_{i=0}^{k-1},\{\lambda\}^k)$. Recall that this implies the recursive definition
    \begin{equation}\label{def:gamma_3avg}
        \Gamma_{k+1}(\cdot)=(1-\lambda)\Gamma_k(\cdot)+\lambda\gamma_k(\cdot), \quad \gamma_k(\cdot)=\ell_f(\cdot;\tx_{k+1}) + h^{\alpha}(\cdot),
    \end{equation}
    with $\Gamma_0(\cdot)=\ell_f(\cdot;y_0)+h^\alpha(\cdot)$. By simple induction and the definition of $s_k$ in~\eqref{def:s_3avg}, we can show that $x_{k}=\argmin_{x\in\R^n}\Gamma_{k}(\cdot)$ for all $k\geq 0$. Then, we have the following proposition.

    \begin{proposition}\label{prop:single_step_3avg}
        For all $k\geq 0$, the following inequality holds
        \begin{equation}
            \min_{x\in\R^n}\Gamma_k(x)\geq\phi^\alpha(y_k) -(1-\lambda)^{k}\Delta,
        \end{equation}
        where $\Delta=\phi^\alpha(y_0)-\Gamma_0(x_0)$.
    \end{proposition}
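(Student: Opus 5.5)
The proof is by induction on $k$, tracking $m_k:=\min_{x\in\R^n}\Gamma_k(x)=\Gamma_k(x_k)$, which holds since $x_k=\argmin\Gamma_k$ as noted before the statement. For $k=0$ the claim reads $\Gamma_0(x_0)\ge \phi^\alpha(y_0)-\Delta$, which is an equality by the definition of $\Delta$. For the inductive step I will show
\[
m_{k+1}-\phi^\alpha(y_{k+1})\ \ge\ (1-\lambda)\bigl(m_k-\phi^\alpha(y_k)\bigr),
\]
which closes the induction at once.

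\textbf{Step 1 (lower bound on $m_{k+1}$).} By \eqref{def:gamma_3avg},
\[
m_{k+1}=(1-\lambda)\Gamma_k(x_{k+1})+\lambda\gamma_k(x_{k+1})\ \ge\ (1-\lambda)\Big[m_k+\tfrac{\alpha}{2}\|x_{k+1}-x_k\|^2\Big]+\lambda\gamma_k(x_{k+1}).
\]
This uses the $\alpha$-strong convexity of $\Gamma_k$ (Lemma~\ref{lem:model_props}(b)) together with $x_k=\argmin\Gamma_k$.

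\textbf{Step 2 (swap $m_k$ for a linearization at $\tx_{k+1}$).} Write $m_k=\phi^\alpha(y_k)+(m_k-\phi^\alpha(y_k))$. By convexity of $f$,
\[
\phi^\alpha(y_k)\ \ge\ \ell_f(y_k;\tx_{k+1})+h^\alpha(y_k)=\gamma_k(y_k).
\]
Combining this with Step 1 gives
\[
m_{k+1}\ \ge\ (1-\lambda)\bigl(m_k-\phi^\alpha(y_k)\bigr)+(1-\lambda)\gamma_k(y_k)+\lambda\gamma_k(x_{k+1})+\tfrac{(1-\lambda)\alpha}{2}\|x_{k+1}-x_k\|^2 .
\]
The function $\ell_f(\cdot;\tx_{k+1})$ is affine and $h^\alpha$ is convex. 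Together with \eqref{def:y_3avg}, this yields
\[
(1-\lambda)\gamma_k(y_k)+\lambda\gamma_k(x_{k+1})\ \ge\ \ell_f(y_{k+1};\tx_{k+1})+h^\alpha(y_{k+1}).
\]

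\textbf{Step 3 (the geometric step, the key point).} From \eqref{def:tx_taa} and \eqref{def:y_3avg},
\[
y_{k+1}-\tx_{k+1}=\lambda(x_{k+1}-x_k).
\]
This is the similar-triangle relation of Figure~\ref{fig:AggGCG_taa_geometry}(b). By $L$-smoothness of $f$,
\[
f(y_{k+1})\ \le\ \ell_f(y_{k+1};\tx_{k+1})+\tfrac{L\lambda^2}{2}\|x_{k+1}-x_k\|^2 .
\]
It therefore suffices to have $(1-\lambda)\alpha\ge L\lambda^2$. I will verify that $\lambda$ in \eqref{eq:x0_and_lambda} is exactly the positive root of $L\lambda^2+\alpha\lambda-\alpha=0$: rationalizing gives $\lambda=(\sqrt{\alpha^2+4L\alpha}-\alpha)/(2L)$, so equality holds. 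The quadratic terms then cancel and we obtain $m_{k+1}\ge(1-\lambda)(m_k-\phi^\alpha(y_k))+\phi^\alpha(y_{k+1})$.

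Applying the induction hypothesis $m_k-\phi^\alpha(y_k)\ge-(1-\lambda)^k\Delta$ finishes the proof. The only delicate point is Step 3: the interpolation point $\tx_{k+1}$ has to be aligned with the gap $\|x_{k+1}-x_k\|$ so that the strong-convexity gain cancels the smoothness loss. This alignment is exactly what the choice of $\tx_{k+1}$ and $\lambda$ provides.
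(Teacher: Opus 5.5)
Your proof is correct and follows essentially the same route as the paper. It uses $\alpha$-strong convexity of $\Gamma_k$ at its minimizer $x_k$, then convexity of $f$ and of $\gamma_k$ to reach $\gamma_k(y_{k+1})$, then the identity $y_{k+1}-\tx_{k+1}=\lambda(x_{k+1}-x_k)$ together with $L\lambda^2=\alpha(1-\lambda)$ to absorb the $L$-smoothness term. The only difference is cosmetic: you isolate the contraction $m_{k+1}-\phi^\alpha(y_{k+1})\ge(1-\lambda)\bigl(m_k-\phi^\alpha(y_k)\bigr)$ and apply the induction hypothesis at the end, whereas the paper substitutes it at the start of the inductive step.
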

    \begin{proof}
    First, note that by our choice of $\lambda$ and simple algebra, we can show the relationship 
    \begin{equation}\label{eq:lambda_alpha}
        \frac{L}{\alpha}= \frac{1-\lambda}{\lambda^{2}}.
    \end{equation}
        We now prove the claim by induction on $k$. The base case trivially holds, since
        \begin{equation}
            \phi^\alpha(y_0)-(1-\lambda)^0\Delta=\Gamma_0(x_0)\stackrel{\eqref{eq:x0_and_lambda}}=\min_{x\in\R^n}\Gamma_0(x).
        \end{equation}
        Now suppose that the claim holds for some $k\geq 0$. Then, since $x_{k+1}=\argmin_{x\in\R^n}\Gamma_{k+1}(x)$ by construction, we have
        \begin{align*}
            \min_{x\in\R^n}\Gamma_{k+1}(x)&=\Gamma_{k+1}(x_{k+1})\stackrel{\eqref{def:gamma_3avg}}=(1-\lambda)\Gamma_k(x_{k+1}) + \lambda\gamma_k(x_{k+1})\\
            &\geq (1-\lambda)\min_{x\in\R^n}\Gamma_{k}(x) + \lambda\gamma_k(x_{k+1}) + \frac{(1-\lambda)\alpha}{2}\|x_k-x_{k+1}\|^2\\
            &\geq (1-\lambda)\phi^\alpha(y_k)-(1-\lambda)^{k+1}{\Delta} +\lambda\gamma_k(x_{k+1}) + \frac{\alpha(1-\lambda)}{2}\|x_k-x_{k+1}\|^2\\
            &\stackrel{\eqref{def:gamma_3avg}}\geq (1-\lambda)\gamma_k(y_k)-(1-\lambda)^{k+1}{\Delta} +\lambda\gamma_k(x_{k+1}) + \frac{\alpha(1-\lambda)}{2}\|x_k-x_{k+1}\|^2
            \end{align*}
            where second line follows by Lemma~\ref{lem:model_props}(b), the third by the inductive hypothesis and the fourth by the convexity of $f$. Then, further applying the convexity of $\gamma_k$ and~\eqref{def:y_3avg}, we obtain
            \begin{align*}
            \min_{x\in\R^n}\Gamma_{k+1}(x)
            &\geq \gamma_k(y_{k+1})-(1-\lambda)^{k+1}{\Delta}+ \frac{\alpha(1-\lambda)}{2}\|x_k-x_{k+1}\|^2\\
            &\stackrel{\eqref{def:tx_taa}\eqref{def:y_3avg}}= \gamma_k(y_{k+1})-(1-\lambda)^{k+1}{\Delta}+ \frac{\alpha(1-\lambda)}{2\lambda^2}\|y_{k+1}-\tx_{k+1}\|^2\\
            &\stackrel{\eqref{eq:lambda_alpha}}=\gamma_k(y_{k+1})-(1-\lambda)^{k+1}\Delta + \frac{L}{2}\|y_{k+1}-\tx_{k+1}\|^2\\
            &\stackrel{\eqref{def:gamma_3avg}}\geq \phi^\alpha(y_{k+1})-(1-\lambda)^{k+1}\Delta,
        \end{align*}
         where the first equality follows by the updates in~\eqref{def:tx_taa} and~\eqref{def:y_3avg}, the second by~\eqref{eq:lambda_alpha}, and the final line by the $L$-smoothness of $f$ and the definition of $\gamma_k$ in \eqref{def:gamma_3avg}. We therefore conclude the proof.
    \end{proof}
    \textit{Remark:} The key step in the previous proof was the identity $\lam(x_{k+1}-x_k)=y_{k+1}-\tx_{k+1}$ resulting from the geometric similarity illustrated in Figure~\ref{fig:AggGCG_taa_geometry}(b).

    Using the upper bound in Proposition~\ref{prop:single_step_3avg}, we have (1) a linear convergence rate for the primal-dual certificate $(y_k,\Gamma_k)$ and (2) a corresponding iteration-complexity for finding an $\varepsilon$-solution to~\eqref{def:probIntro}.
    
    \noindent\textbf{Proof of Theorem~\ref{thm:taa_cmplx}}: 
    Applying Proposition~\ref{prop:single_step_3avg} and rearranging terms, we have
        \begin{equation}
            \phi^\alpha(y_{k})-\min_{x\in\R^n}\Gamma_{k}(x)\leq (1-\lambda)^{k}\Delta.
        \end{equation}
        It follows from the definition of $\lam$ in \eqref{eq:x0_and_lambda} that $1-\lambda\leq \bigl(1+{\frac{\sqrt\alpha}{2\sqrt L}}\bigr)^{-2}$, which together with the above inequality implies that
        \begin{equation}
            \phi^\alpha(y_{k})-\min_{x\in\R^n}\Gamma_{k}(x)\leq \frac{\Delta}{\left(1+\frac{\sqrt{\alpha}}{2\sqrt{L}}\right)^{2k}}.
        \end{equation}
        We therefore obtain $\phi^\alpha(y_k)-\min_{x\in\R^n}\Gamma_k(x)\leq\varepsilon/2$ in 
        \begin{equation}
            k=\mathcal{O}\left(1+\sqrt{\frac{L}{\alpha}}\log\left(\frac{\phi^\alpha(y_0)-\Gamma_0(x_0)}{\varepsilon}\right)\right)
        \end{equation}
        iterations. Choosing $\alpha=\varepsilon/(2M)$ and applying Definition~\ref{def:pd_cert} gives the result.\QEDA
\subsection{Analysis of GEM}
    In this subsection, we analyze the non-asymptotic behavior of GEM for solving~\eqref{eq:fenchel_rockafellar_dual}. We begin with standard technical lemmas for accelerated methods.
    \begin{lemma}\label{lem:gem_technical}
        For all $k\geq 0$, the following hold
        \begin{itemize}
            \item[{\rm a)}] $ L^{-1}A_k= \alpha^{-1} \tau_k$;
            \item[{\rm b)}] $a_k^2=\tau_kA_{k+1}=\tau_{k+1}A_k$;
            \item[{\rm c)}] $A_{k}\geq\left(1+\frac{\sqrt\alpha}{2\sqrt{L}}\right)^{2k}$.
        \end{itemize}
    \end{lemma}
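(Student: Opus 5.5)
Parts (a) and (b) come from induction on $k$ using the scalar recursions in~\eqref{eq:gem_scalars}, and part (c) then follows from (a) and (b) by a one-step growth bound.

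\textbf{Part (a).} At $k=0$ we have $\tau_0=\alpha/L$ and $A_0=1$, so $L^{-1}A_0=\alpha^{-1}\tau_0$. For the inductive step, assume $L^{-1}A_k=\alpha^{-1}\tau_k$. Then
\[
\alpha^{-1}\tau_{k+1}=\alpha^{-1}\tau_k+L^{-1}a_k=L^{-1}(A_k+a_k)=L^{-1}A_{k+1}.
\]

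\textbf{Part (b).} Since $a_k$ is the positive root of $a^2-\tau_k a-\tau_kA_k=0$, we get $a_k^2=\tau_k(A_k+a_k)=\tau_kA_{k+1}$. For the second equality, note $\tau_{k+1}A_k=(\tau_k+\alpha a_k/L)A_k$. By (a), $\alpha A_k/L=\tau_k$, so
\[
\tau_{k+1}A_k=\tau_kA_k+\tau_k a_k=\tau_kA_{k+1}.
\]

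\textbf{Part (c).} Substituting $\tau_k=\alpha A_k/L$ from (a) into the formula for $a_k$ gives
\[
a_k=A_k\,\frac{\alpha/L+\sqrt{\alpha^2/L^2+4\alpha/L}}{2}.
\]
Write $r=\alpha/L$, so that $A_{k+1}=A_k\,q$ with
\[
q=1+\frac{r+\sqrt{r^2+4r}}{2}.
\]
It then suffices to show $q\geq(1+\sqrt r/2)^2=1+\sqrt r+r/4$. This holds because $\sqrt{r^2+4r}\geq 2\sqrt r$, hence
\[
q\geq 1+\frac r2+\sqrt r\geq 1+\sqrt r+\frac r4.
\]
Induction from $A_0=1$ then gives $A_k\geq(1+\sqrt{\alpha}/(2\sqrt L))^{2k}$.

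None of these steps is a real obstacle. The only point requiring care is using (a) to eliminate $\tau_k$ before the elementary inequality in (c).
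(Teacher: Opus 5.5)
Your proof is correct and follows essentially the same route as the paper. Part (a) comes from the $\tau$ recursion (you use induction, the paper telescopes). Part (b) comes from the quadratic defining $a_k$ together with (a). Part (c) uses the bound $\sqrt{\tau_k^2+4\tau_kA_k}\ge 2\sqrt{\tau_kA_k}$ and the substitution $\tau_k=\alpha A_k/L$.

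The only cosmetic difference is in (c). You substitute (a) first to get the exact ratio $A_{k+1}/A_k$, which the paper also derives in the proof of Proposition~\ref{prop:3avg_duality}. The paper instead bounds $a_k$ first and substitutes at the end.
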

    \begin{proof}
        \noindent {a)} Note that expanding the recursion for $\tau_k$ in~\eqref{eq:gem_scalars} from $0$ to $k-1$, we have
        \begin{equation}\label{eq:tau_k_closed_form}
            \alpha^{-1}\tau_{k} = \alpha^{-1}\tau_0+L^{-1}\sum_{i=0}^{k-1}a_i= \alpha^{-1}\tau_0+L^{-1}(A_k-A_0).
        \end{equation}
        Using $\alpha^{-1}\tau_0=L^{-1}= L^{-1}A_0$ gives,
        \[
            \alpha^{-1}\tau_{k}=\alpha^{-1}\tau_0-L^{-1} A_0+L^{-1} A_k= L^{-1} A_k.
        \]
        \noindent b) The expression for $a_k$ in~\eqref{eq:gem_scalars} gives
        \[
        a_k^2=\tau_ka_k+\tau_kA_k\stackrel{\eqref{eq:gem_hatx}}=\tau_kA_{k+1}=(\alpha L^{-1}A_k)(\alpha^{-1}L\tau_{k+1})=A_{k}\tau_{k+1},
        \]
        where the third equality follows from part (a).

        \noindent c) From~\eqref{eq:gem_scalars}, we have
        \begin{equation}
            a_k\geq \frac{\tau_k}{2}+\sqrt{\tau_kA_k},
        \end{equation}
        which implies that
        \[
            A_{k+1}=a_k+A_{k}\geq \frac{\tau_k}{2}+\sqrt{\tau_kA_k}+A_k\geq \left(\sqrt{A_k}+{\frac{\sqrt{\tau_k}}{2}}\right)^2
            = A_k\left(1+\frac{\sqrt{\alpha}}{2\sqrt{L}}\right)^2,
        \]
        where the last equality follows by part (a). Expanding the inequality from $0$ to $k$ and using $A_0=1$ yields the claim.
    \end{proof}

    With the necessary technical lemmas established, we begin by examining the optimality conditions of the extrapolated update~\eqref{def:g_gem}.
    \begin{lemma}\label{lem:optim_gem}
        Define $\gamma_k(\cdot):=\inner{-\hat v_k}{\cdot}+f^*(\cdot)$. Then, for all $g\in\dom f^*$, we have
        \begin{equation}\label{eq:single_step_lin_gem}
            A_k\gamma_k(z_k)+a_k\gamma_k(g)+ \frac{\tau_k}{\alpha}\D_{\nu^*}(g\|g_k)\geq A_{k+1}\gamma_k(z_{k+1})+\frac{\tau_k}{\alpha}\D_{\nu^*}(g_{k+1}\|g_k)+\frac{\tau_{k+1}}{\alpha}\D_{\nu^*}(g\|g_{k+1})
        \end{equation}
    \end{lemma}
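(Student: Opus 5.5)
The plan is to apply the Three-Points Inequality (Lemma~\ref{lem:three_point}) to the update~\eqref{def:g_gem}, and then to use the convexity of $\gamma_k$ along the averaging step~\eqref{def:xz_gem} to bring in the $z$-terms.

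\textbf{Step 1 (the proximal step).} Set $\Phi(\cdot)=a_k\gamma_k(\cdot)$, $\omega=\nu^*$, $\beta=\tau_k/\alpha$ and $x_0=g_k$. Then $g_{k+1}$ is exactly the minimizer $x^+$ in Lemma~\ref{lem:three_point}. We need the modulus $\mu$ of relative strong convexity of $\Phi$ with respect to $\nu^*$. The linear term in $\gamma_k$ is irrelevant, so it suffices to look at $f^*$. Since $Lf^*-\nu^*$ is convex, $a_kf^*-(a_k/L)\nu^*$ is convex, which gives $\mu=a_k/L$.

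With this modulus, $\beta+\mu=(\tau_k+\alpha a_k/L)/\alpha=\tau_{k+1}/\alpha$ by~\eqref{eq:gem_scalars}. The domain hypothesis $\dom f^*\subseteq\dom\nu^*$ holds by assumption. Lemma~\ref{lem:three_point} then yields, for every $g\in\dom f^*$,
\[
a_k\gamma_k(g_{k+1})+\tfrac{\tau_k}{\alpha}\D_{\nu^*}(g_{k+1}\|g_k)+\tfrac{\tau_{k+1}}{\alpha}\D_{\nu^*}(g\|g_{k+1})\le a_k\gamma_k(g)+\tfrac{\tau_k}{\alpha}\D_{\nu^*}(g\|g_k).
\]

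\textbf{Step 2 (adding the $z$-terms).} Add $A_k\gamma_k(z_k)$ to both sides of the inequality from Step 1. Since $\gamma_k$ is convex and $z_{k+1}=(A_kz_k+a_kg_{k+1})/A_{k+1}$ with $A_{k+1}=A_k+a_k$, we have
\[
A_k\gamma_k(z_k)+a_k\gamma_k(g_{k+1})\ge A_{k+1}\gamma_k(z_{k+1}).
\]
Substituting this lower bound on the left-hand side gives exactly~\eqref{eq:single_step_lin_gem}.

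\textbf{Main obstacle.} The delicate point is the subgradient convention in the Bregman terms. In Lemma~\ref{lem:three_point}, $\D_{\nu^*}(\cdot\|g_k)$ may use an arbitrary subgradient, while $\D_{\nu^*}(\cdot\|g_{k+1})$ uses a specific subgradient produced by the optimality condition. That specific choice is what must be carried forward as the linearization at $g_{k+1}$ in the next iteration. I will state this convention explicitly so the telescoping in Theorem~\ref{thm:gem_cmplx} stays consistent. It matches the choice $(f^*)'(g_k)=\tx_k$ made in Proposition~\ref{prop:3avg_duality}.

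Beyond that, the only remaining checks are that $g_{k+1}$ exists and is unique, which follows from strong convexity of $f^*$, and that $\dom\Phi=\dom f^*$ has nonempty relative interior.
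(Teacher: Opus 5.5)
Your proposal is correct and follows essentially the same route as the paper. The paper also applies Lemma~\ref{lem:three_point} with $\Phi=a_k\gamma_k$, $\omega=\nu^*$, $\beta=\tau_k/\alpha$ and $\mu=a_k/L$, so that $\beta+\mu=\tau_{k+1}/\alpha$; it then adds $A_k\gamma_k(z_k)$ to both sides and uses the convexity of $\gamma_k$ along the averaging step $z_{k+1}=\frac{A_k}{A_{k+1}}z_k+\frac{a_k}{A_{k+1}}g_{k+1}$.
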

    \begin{proof}
        Noting that $f^*$ is $(1/L)$-strongly convex relative to $\nu^*$, and applying Lemma~\ref{lem:three_point} to the $g_{k+1}$ update in~\eqref{def:g_gem},  with $\Phi(\cdot):=a_k \gamma_k(\cdot)$, $\omega:=\nu^*$, $\beta=\tau_k/\alpha$ and $\mu=a_k/L$, we have for all $g\in\dom f^*$,
        \begin{equation*}
            a_k\gamma_k(g_{k+1})+\frac{\tau_k}{\alpha}\D_{\nu^*}(g_{k+1}\|g_k)+\alpha^{-1}\left(\frac{\alpha a_k}{L}+\tau_k\right)\D_{\nu^*}(g\|g_{k+1})\leq a_k\gamma_k(g)+\frac{\tau_k}{\alpha}\D_{\nu^*}(g\|g_k).
        \end{equation*}
        Adding $A_{k}\gamma_k(z_k)$ to both sides gives
        \begin{align*}
            &A_{k}\gamma_k(z_k)+a_k\gamma_k(g)+\frac{\tau_k}{\alpha}\D_{\nu^*}(g\|g_k)  \\
            \geq&a_k\gamma_k(g_{k+1})+A_{k}\gamma_k(z_k)+\frac{\tau_k}{\alpha}\D_{\nu^*}(g_{k+1}\|g_k)+\alpha^{-1}\left(\frac{\alpha a_k}{L}+\tau_k\right)\D_{\nu^*}(g\|g_{k+1})\\
            \stackrel{\eqref{def:xz_gem}}\geq&A_{k+1}\gamma_k(z_{k+1})+\frac{\tau_k}{\alpha}\D_{\nu^*}(g_{k+1}\|g_k)+\frac{\tau_{k+1}}{\alpha}\D_{\nu^*}(g\|g_{k+1}),
        \end{align*}
        where the second inequality follows by the convexity of $\gamma_k(\cdot)$ and the definition of $\tau_{k+1}$ in~\eqref{eq:gem_scalars}.
    \end{proof}
    
    The following lemma gives an equivalent but more convenient form of \eqref{eq:single_step_lin_gem} in Lemma \ref{lem:optim_gem}.
    \begin{lemma}\label{lem:single_step_ub_psi}
        For all iterations $k\geq 0$ of Algorithm~\ref{alg:GEM-dual}, the following inequality holds
        \begin{align}
            \nonumber A_k\left[\ell_{(h^\alpha)^*}(z_k;z_{k+1})+f^*(z_k)\right]+\frac{\tau_k}{\alpha}\D_{\nu^*}(g\|g_k)-\frac{\tau_k}{\alpha}\D_{\nu^*}(g_{k+1}\|g_k) - \frac{\tau_{k+1}}{\alpha}\D_{\nu^*}(g\|g_{k+1})
        \\\geq A_{k+1}\psi^\alpha(z_{k+1})+a_k\left[h^\alpha(v_{k+1})+\inner{v_{k+1}}{g}-f^*(g)\right] + a_k\inner{v_{k+1}-\hat v_k}{g_{k+1}-g}\label{ineq:single_step_ub_psi}.
        \end{align}
    \end{lemma}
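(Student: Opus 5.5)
The plan is to start from Lemma~\ref{lem:optim_gem} and convert both sides, using the definition $\gamma_k(\cdot)=\inner{-\hat v_k}{\cdot}+f^*(\cdot)$, into the claimed form. Move the Bregman terms $\frac{\tau_k}{\alpha}\D_{\nu^*}(g_{k+1}\|g_k)$ and $\frac{\tau_{k+1}}{\alpha}\D_{\nu^*}(g\|g_{k+1})$ to the left-hand side. What remains to show is
\[
A_k\gamma_k(z_k)+a_k\gamma_k(g)-A_{k+1}\gamma_k(z_{k+1})\le A_k\bigl[\ell_{(h^\alpha)^*}(z_k;z_{k+1})+f^*(z_k)\bigr]-A_{k+1}\psi^\alpha(z_{k+1})-a_k\bigl[h^\alpha(v_{k+1})+\inner{v_{k+1}}{g}-f^*(g)\bigr]-a_k\inner{v_{k+1}-\hat v_k}{g_{k+1}-g},
\]
or possibly an identity, after which the lemma follows.

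The key tool is the exact linearization at $z_{k+1}$. Since $v_{k+1}=\nabla(h^\alpha)^*(-z_{k+1})$, Lemma~\ref{lem:beck_conj_pair} gives $(h^\alpha)^*(-z_{k+1})=-\inner{z_{k+1}}{v_{k+1}}-h^\alpha(v_{k+1})$. Hence, by \eqref{def:ell},
\[
\ell_{(h^\alpha)^*}(u;z_{k+1})=-h^\alpha(v_{k+1})-\inner{v_{k+1}}{u}
\]
for all $u$; this is an affine function of $u$. Using $A_{k+1}z_{k+1}=A_kz_k+a_kg_{k+1}$ from \eqref{def:xz_gem}, expand
\[
A_{k+1}\psi^\alpha(z_{k+1})=A_{k+1}\ell_{(h^\alpha)^*}(z_{k+1};z_{k+1})+A_{k+1}f^*(z_{k+1}),
\]
and split the affine part as $A_k\ell_{(h^\alpha)^*}(z_k;z_{k+1})+a_k\ell_{(h^\alpha)^*}(g_{k+1};z_{k+1})$.

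The remaining work is bookkeeping of the inner products with $\hat v_k$ versus $v_{k+1}$. On the left of Lemma~\ref{lem:optim_gem}, the $\hat v_k$ terms are
\[
-A_k\inner{\hat v_k}{z_k}-a_k\inner{\hat v_k}{g}+A_{k+1}\inner{\hat v_k}{z_{k+1}}=a_k\inner{\hat v_k}{g_{k+1}-g}.
\]
On the target side, the $v_{k+1}$ terms combine to $-a_k\inner{v_{k+1}}{g_{k+1}}$ from the linearization plus $a_k\inner{v_{k+1}}{g}$ from the bracket. The difference is exactly the cross term $a_k\inner{v_{k+1}-\hat v_k}{g_{k+1}-g}$. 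The $f^*$ terms reduce to $A_kf^*(z_k)+a_kf^*(g)-A_{k+1}f^*(z_{k+1})$ on both sides. The one subtlety is that the $A_{k+1}f^*(z_{k+1})$ term on the left of Lemma~\ref{lem:optim_gem} must match the one inside $A_{k+1}\psi^\alpha(z_{k+1})$ exactly, so no convexity inequality is used and the relation is really an identity rearrangement. The $h^\alpha(v_{k+1})$ terms cancel between $a_k\ell_{(h^\alpha)^*}(g_{k+1};z_{k+1})$ and the bracket.

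I expect the main obstacle to be sign-tracking in the identity $\ell_{(h^\alpha)^*}(u;z_{k+1})=-h^\alpha(v_{k+1})-\inner{v_{k+1}}{u}$ with the $-z$ argument convention. I will verify it once carefully and then substitute it mechanically.
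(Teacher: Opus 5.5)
Your proposal is correct and follows essentially the same route as the paper: both treat the lemma as an exact algebraic rearrangement of Lemma~\ref{lem:optim_gem}, using the Fenchel--Young identity $\ell_{(h^\alpha)^*}(u;z_{k+1})=-h^\alpha(v_{k+1})-\inner{v_{k+1}}{u}$ together with $A_{k+1}z_{k+1}-A_kz_k=a_kg_{k+1}$, so no further inequality is needed. The only cosmetic difference is that you split $A_{k+1}\ell_{(h^\alpha)^*}(z_{k+1};z_{k+1})$ by affineness, whereas the paper first rewrites $\gamma_k(\cdot)$ in terms of $\ell_{(h^\alpha)^*}(\cdot;z_{k+1})$ and the cross term and then expands.
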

    \begin{proof}
        Using the definition of $\gamma_k$, we observe that for all $g\in\dom f^*$, the following identity holds
        \begin{align*}
            \gamma_k(g)&=\inner{-\hat v_k}{g}+f^*(g)=\inner{v_{k+1}-\hat v_k}{g}+\inner{-v_{k+1}}{g-z_{k+1}}-\inner{v_{k+1}}{z_{k+1}}+f^*(g)\\
            &\stackrel{\eqref{def:xz_gem}}=\inner{v_{k+1}-\hat v_k}{g}+\ell_{(h^\alpha)^*}(g;z_{k+1})-(h^\alpha)^*(-z_{k+1})-\inner{v_{k+1}}{z_{k+1}}+f^*(g),
        \end{align*}
        where the final equality holds by $v_{k+1}=\nabla (h^\alpha)^*(-z_{k+1})$ in \eqref{def:xz_gem} and the definition of $\ell_{(h^\alpha)^*}(\cdot;z)$ in \eqref{def:ell}. Expanding each instance of $\gamma_k(\cdot)$ in~\eqref{eq:single_step_lin_gem}, we obtain
        \begin{align*}
            &A_k\left[\inner{v_{k+1}-\hat v_k}{z_k}+\ell_{(h^\alpha)^*}(z_k;z_{k+1})-(h^\alpha)^*(-z_{k+1})-\inner{v_{k+1}}{z_{k+1}}+f^*(z_k)\right]\\
            &+a_k\left[\inner{v_{k+1}-\hat v_k}{g}+\ell_{(h^\alpha)^*}(g;z_{k+1})-(h^\alpha)^*(-z_{k+1})-\inner{v_{k+1}}{z_{k+1}}+f^*(g)\right]+\frac{\tau_k}{\alpha} \D_{\nu^*}(g\|g_k)\\
            \geq& A_{k+1}\left[-\inner{v_{k+1}}{z_{k+1}}+\inner{v_{k+1}-\hat v_k}{z_{k+1}}+f^*(z_{k+1})\right]+\frac{\tau_{k}}{\alpha}\D_{\nu^*}(g_{k+1}\|g_k)+\frac{\tau_{k+1}}{\alpha}\D_{\nu^*}(g\|g_{k+1}).
        \end{align*}
        Rearranging yields
        \begin{align*}
            &A_k\left[\ell_{(h^\alpha)^*}(z_k;z_{k+1})+f^*(z_k)\right]+\frac{\tau_k}{\alpha} \D_{\nu^*}(g\|g_k)-\frac{\tau_{k+1}}{\alpha}\D_{\nu^*}(g\|g_{k+1})-\frac{\tau_k}{\alpha}\D_{\nu^*}(g_{k+1}\|g_k)\\
            \geq& A_{k+1}\psi^\alpha(z_{k+1})+a_k\left[- \ell_{(h^\alpha)^{*}}(g;z_{k+1})-f^*(g)\right]+\inner{v_{k+1}-\hat v_k}{A_{k+1}z_{k+1}-A_kz_k-a_kg}\\
            = & A_{k+1}\psi^\alpha(z_{k+1})+a_k\left[\inner{v_{k+1}}{g} + h^\alpha(v_{k+1})-f^*(g)\right]+\inner{v_{k+1}-\hat v_k}{A_{k+1}z_{k+1}-A_kz_k-a_kg},
        \end{align*}
        where the equality follows from the fact that
        \begin{align*}
           \ell_{(h^\alpha)^*}(g;z_{k+1})\stackrel{\eqref{def:ell}}=&(h^\alpha)^*(-z_{k+1}) + \inner{-\nabla (h^\alpha)^*(-z_{k+1})}{g-z_{k+1}}\\
           \stackrel{\eqref{def:xz_gem}}=&(h^\alpha)^*(-z_{k+1}) - \inner{v_{k+1}}{-z_{k+1}}-\inner{v_{k+1}}{g}\\
           \stackrel{Lemma~\ref{lem:beck_conj_pair}}=&  -h^\alpha(v_{k+1})-\inner{v_{k+1}}{g}.
        \end{align*}
        Finally, we note that
        \begin{equation}
            A_{k+1}z_{k+1}-A_kz_k\stackrel{\eqref{def:xz_gem}}=a_kg_{k+1},
        \end{equation}
        which completes the proof.
    \end{proof}
    Next, we state a simple consequence of our choice of $a_k$, $A_k$ and the smoothness of $(h^\alpha)^*$.
    \begin{lemma}\label{lem:hlam_smooth_ub}
        For all iterations $k\geq 0$ of Algorithm~\ref{alg:GEM-dual}, the following inequality holds
        \begin{equation}\label{ineq:hlam_smooth_ub}
            \begin{split}
                A_k\psi^\alpha(z_k)-\frac{\alpha a_k^2}{2\tau_{k+1}}\|v_{k+1}-v_k\|^2\geq A_k\left[\ell_{(h^\alpha)^*}(z_k;z_{k+1})+f^*(z_k)\right].
            \end{split}
        \end{equation}
    \end{lemma}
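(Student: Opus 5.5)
The plan is to reduce \eqref{ineq:hlam_smooth_ub} to the standard ``smoothness lower bound'' (co-coercivity) for the $\alpha^{-1}$-smooth convex function $\rho(\cdot):=(h^\alpha)^*(-\cdot)$, after simplifying the scalar coefficient. The first step is to eliminate the coefficient using Lemma~\ref{lem:gem_technical}(b). Since $a_k^2=\tau_{k+1}A_k$, we have $\alpha a_k^2/(2\tau_{k+1})=\alpha A_k/2$. Because $\psi^\alpha(z_k)=(h^\alpha)^*(-z_k)+f^*(z_k)$, the $f^*(z_k)$ terms cancel. Dividing by $A_k>0$, the claim becomes
\[
(h^\alpha)^*(-z_k)-\ell_{(h^\alpha)^*}(z_k;z_{k+1})\geq \frac{\alpha}{2}\|v_{k+1}-v_k\|^2 .
\]
By \eqref{def:ell}, the left-hand side is exactly the Bregman divergence $\D_\rho(z_k\|z_{k+1})$, since $\nabla\rho(z)=-\nabla(h^\alpha)^*(-z)$.

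The second step is to identify the gradients. By \eqref{def:xz_gem}, $v_{k+1}=\nabla(h^\alpha)^*(-z_{k+1})$. For $k\geq 1$ the same definition gives $v_k=\nabla(h^\alpha)^*(-z_k)$. For $k=0$ the initialization gives $v_0=\nabla(h^\alpha)^*(-g_0)$ with $z_0=g_0$. Hence in all cases $\nabla\rho(z_k)-\nabla\rho(z_{k+1})=v_{k+1}-v_k$.

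The main step is to prove $\D_\rho(z_k\|z_{k+1})\geq\frac{\alpha}{2}\|\nabla\rho(z_k)-\nabla\rho(z_{k+1})\|^2$. I would argue through conjugacy rather than quote the generic co-coercivity result, so that the norms come out right: the gradients live in the primal space and carry the primal norm, while the $z$'s carry $\|\cdot\|_*$.

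Write $h^\alpha(v)+(h^\alpha)^*(-z)=\inner{v}{-z}$ whenever $v=\nabla(h^\alpha)^*(-z)$, which holds by Lemma~\ref{lem:beck_conj_pair}. Using this identity at both $z_k$ and $z_{k+1}$, the divergence simplifies to
\[
\D_\rho(z_k\|z_{k+1})=h^\alpha(v_{k+1})-h^\alpha(v_k)-\inner{-z_k}{v_{k+1}-v_k}.
\]
Since $-z_k\in\partial h^\alpha(v_k)$, this is a Bregman divergence of $h^\alpha$ at $v_k$ with that subgradient. The $\alpha$-strong convexity of $h^\alpha$ (from the $1$-strong convexity of $w$) then bounds it below by $\frac{\alpha}{2}\|v_{k+1}-v_k\|^2$, which closes the argument.

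The only delicate point is this conjugate rewriting. I expect it to be the main place to double-check signs, and to confirm that strong convexity holds for an arbitrary subgradient of $h^\alpha$ at $v_k$, not only at points where $h^\alpha$ is differentiable.
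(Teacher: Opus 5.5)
Your proof is correct. Its first half coincides with the paper's: both use $a_k^2=\tau_{k+1}A_k$ from Lemma~\ref{lem:gem_technical}(b) to replace the coefficient by $\alpha A_k/2$, cancel $f^*(z_k)$, and reduce the claim to $(h^\alpha)^*(-z_k)-\ell_{(h^\alpha)^*}(z_k;z_{k+1})\ge\frac{\alpha}{2}\|v_{k+1}-v_k\|^2$. You differ in how you justify this key inequality.

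\emph{The paper's route.} It quotes the lower bound for smooth convex functions (Beck, Theorem 5.8(iii)) applied to the $(1/\alpha)$-smooth function $(h^\alpha)^*$. This implicitly imports two results: the conjugate correspondence between $\alpha$-strong convexity of $h^\alpha$ and $(1/\alpha)$-smoothness of $(h^\alpha)^*$ with respect to $\|\cdot\|_*$, and the co-coercivity-type bound in a general norm.

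\emph{Your route.} With your $\rho(\cdot)=(h^\alpha)^*(-\cdot)$, you apply the Fenchel--Young equality at $(v_k,-z_k)$ and $(v_{k+1},-z_{k+1})$. This gives the exact identity $\D_\rho(z_k\|z_{k+1})=h^\alpha(v_{k+1})-h^\alpha(v_k)-\inner{-z_k}{v_{k+1}-v_k}$. So the dual Bregman divergence equals a primal Bregman divergence of $h^\alpha$ taken with the subgradient $-z_k\in\partial h^\alpha(v_k)$. You then invoke only the primitive assumption, the $\alpha$-strong convexity of $h^\alpha$. This is more elementary and self-contained, and it handles the primal/dual norm bookkeeping automatically. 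It also makes clear that the inequality is just strong convexity of $h^\alpha$ in disguise. The paper's version buys brevity at the cost of citing heavier results.

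\emph{Your two flagged points.} The subgradient concern is harmless. Strong convexity in the interpolation sense, which $h+\alpha w$ inherits from convex $h$ and $1$-strongly convex $w$ on $\dom h$, gives the quadratic lower bound for every subgradient, via the usual limiting argument along the segment. Your explicit check that $v_0=\nabla(h^\alpha)^*(-z_0)$ follows from the initialization $v_0=\nabla(h^\alpha)^*(-g_0)$, $z_0=g_0$, rather than from \eqref{def:xz_gem}, fills a detail the paper leaves implicit.
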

    \begin{proof}
        By the $(1/\alpha)$-smoothness of $(h^\alpha)^*$ (see~\cite[Theorem 5.8(iii)]{Beck2017}), we have
        \begin{align}
            \nonumber\ell_{(h^\alpha)^*}(z_k;z_{k+1})&\leq (h^\alpha)^*(-z_k)-\frac{\alpha}{2}\|\nabla (h^\alpha)^*(-z_k)-\nabla (h^\alpha)^*(-z_{k+1})\|^2\\
            &\stackrel{\eqref{def:xz_gem}}=(h^\alpha)^*(-z_k)-\frac{\alpha}{2}\|v_k-v_{k+1}\|^2.\label{ineq:hlam_smooth}
        \end{align}
        It thus follows from Lemma~\ref{lem:gem_technical}(b) that
        \begin{equation*}
            A_k\psi^\alpha(z_k)-\frac{\alpha a_k^2}{2 \tau_{k+1}}\|v_k-v_{k+1}\|^2= A_k\left[\psi^\alpha(z_k)-\frac{\alpha}{2}\|v_k-v_{k+1}\|^2\right]  \stackrel{\eqref{ineq:hlam_smooth}}\geq A_k\left[f^*(z_k)+\ell_{(h^\alpha)^*}(z_k;z_{k+1})\right].
        \end{equation*}
    \end{proof}
    
    The following lemma provides a convenient algebraic identity resulting from the definition of the extrapolation point $\hat v_{k}$.
    \begin{lemma}\label{lem:sum_grad_extrap}
        For every $k\geq 0$, define
        \begin{equation}
            s_k:=a_k(v_{k+1}-v_k)\label{def:sk}
        \end{equation}
        with $s_{-1}:=0$. Then, for all $g\in\R^n$ we have
        \begin{equation}\label{eq:sum_grad_extrap}
            \sum_{i=0}^{k-1}a_i\inner{v_{i+1}-\hat v_i}{g_{i+1}-g}=\inner{s_{k-1}}{g_k-g}-\sum_{i=1}^{k-1}\inner{s_{i-1}}{g_{i+1}-g_i}.
        \end{equation}
    \end{lemma}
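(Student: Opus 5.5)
We prove \eqref{eq:sum_grad_extrap} by expressing each summand through the $s_i$ of \eqref{def:sk} and then applying summation by parts. First we rewrite the difference $v_{i+1}-\hat v_i$. By the definition of $\hat v_i$ in \eqref{eq:gem_hatx},
\[
a_i(v_{i+1}-\hat v_i)=a_i(v_{i+1}-v_i)-a_{i-1}(v_i-v_{i-1})=s_i-s_{i-1},
\]
for every $i\geq 0$. At $i=0$ this uses $a_{-1}=0$ and $v_{-1}=v_0$, which is consistent with the convention $s_{-1}=0$. Hence the left-hand side of \eqref{eq:sum_grad_extrap} equals
\[
\sum_{i=0}^{k-1}\inner{s_i-s_{i-1}}{g_{i+1}-g}.
\]

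Next we perform the Abel summation. Split the sum as $\sum_{i=0}^{k-1}\inner{s_i}{g_{i+1}-g}-\sum_{i=0}^{k-1}\inner{s_{i-1}}{g_{i+1}-g}$. In the first sum, shift the index so that $s_{i-1}$ appears, giving $\sum_{i=1}^{k}\inner{s_{i-1}}{g_i-g}$. In the second sum, the $i=0$ term vanishes because $s_{-1}=0$. Subtracting the two sums term by term for $i=1,\dots,k-1$ gives
\[
\inner{s_{k-1}}{g_k-g}+\sum_{i=1}^{k-1}\inner{s_{i-1}}{(g_i-g)-(g_{i+1}-g)}=\inner{s_{k-1}}{g_k-g}-\sum_{i=1}^{k-1}\inner{s_{i-1}}{g_{i+1}-g_i},
\]
which is exactly the right-hand side of \eqref{eq:sum_grad_extrap}.

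The only delicate point is the boundary bookkeeping. One must check that the initialization $a_{-1}=0$, $v_{-1}=v_0$ makes the $i=0$ identity consistent with $s_{-1}=0$. One must also handle the degenerate case $k=0$, where both sides are empty or zero: the right-hand side reduces to $\inner{s_{-1}}{g_0-g}=0$. Note that $s_k$ here denotes the new quantity of \eqref{def:sk}, not the TAA dual average.
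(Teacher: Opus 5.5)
Your proof is correct and follows the paper's argument: it derives $a_i(v_{i+1}-\hat v_i)=s_i-s_{i-1}$ from the definition of $\hat v_i$, with the $i=0$ boundary covered by $a_{-1}=0$ and $s_{-1}=0$, and then applies summation by parts. The only difference is cosmetic: the paper telescopes out $-\inner{s_{k-1}}{g}$ first and Abel-sums $\sum_i\inner{s_i-s_{i-1}}{g_{i+1}}$, whereas you shift the index in the first of two split sums.
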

    \begin{proof}
        By the definitions of $s_k$ and $\hat v_k$ in \eqref{def:sk} and \eqref{eq:gem_hatx}, respectively, we have
        \[
            s_k-s_{k-1}=a_k(v_{k+1}-v_k)-a_{k-1}(v_{k}-v_{k-1})
            \stackrel{\eqref{eq:gem_hatx}}=a_k(v_{k+1}-\hat v_{k}).
        \]
        Then, summing from $0$ to $k-1$ and using $s_{-1}=0$, we have
        \begin{align*}
            \sum_{i=0}^{k-1}a_i & \inner{v_{i+1}-\hat v_i}{g_{i+1}-g}=\sum_{i=0}^{k-1}\left(\inner{s_i}{g_{i+1}-g}-\inner{s_{i-1}}{g_{i+1}-g}\right)\\
            &=-\inner{s_{k-1}}{g}+\sum_{i=0}^{k-1}\inner{s_i-s_{i-1}}{g_{i+1}}
            =\inner{s_{k-1}}{g_k-g}-\sum_{i=1}^{k-1}\inner{s_{i-1}}{g_{i+1}-g_i}.
        \end{align*}
        We therefore prove the claim.
    \end{proof}

    Combining the previous lemmas, we are now ready to prove Theorem~\ref{thm:gem_cmplx}.

    \noindent\textbf{Proof of Theorem~\ref{thm:gem_cmplx}}: First, applying Lemmas~\ref{lem:single_step_ub_psi} and~\ref{lem:hlam_smooth_ub}, we obtain for all iterations $i\geq 0$ and all $g\in\dom f^*$
        \begin{align*}
            A_i\psi^\alpha(z_i)-\frac{\alpha a_i^2}{2\tau_{i+1}}\|v_{i+1}-v_i\|^2+\frac{\tau_i}{\alpha}\D_{\nu^*}(g\|g_i)-\frac{\tau_i}{\alpha}\D_{\nu^*}(g_{i+1}\|g_i) - \frac{\tau_{i+1}}{\alpha}\D_{\nu^*}(g\|g_{i+1})
        \\\stackrel{\eqref{ineq:single_step_ub_psi}\eqref{ineq:hlam_smooth_ub}}\geq A_{i+1}\psi^\alpha(z_{i+1})+a_i\left[h^\alpha(v_{i+1})+\inner{v_{i+1}}{g}-f^*(g)\right] + a_i\inner{v_{i+1}-\hat v_i}{g_{i+1}-g}.
        \end{align*}
        Summing from $i=0$ to $k-1$, rearranging, dropping the non-negative term $\D_{\nu^*}(g_1\|g_{0})$, and using $\tau_0=\alpha/L$ gives
        \begin{align}
            \nonumber A_0\psi^\alpha(z_0)&+L^{-1}\D_{\nu^*}(g\|g_0)
            \nonumber\geq A_{k}\psi^\alpha(z_k) +\frac{\tau_k}{\alpha}\D_{\nu^*}(g\|g_k)+ \sum_{i=0}^{k-1}a_i[h^\alpha(v_{i+1})+\inner{v_{i+1}}{g}-f^*(g)]\\
            \nonumber+&\frac{\alpha a_{k-1}^2}{2\tau_k}\|v_{k}-v_{k-1}\|^2+\sum_{i=0}^{k-1}a_i\inner{v_{i+1}-\hat v_i}{g_{i+1}-g}+\sum_{i=1}^{k-1}\left(\frac{\tau_i}{\alpha}\D_{\nu^*}(g_{i+1}\|g_{i}) + \frac{\alpha a_i^2}{2\tau_{i}}\|v_{i}-v_{i-1}\|^2\right)\\
            \nonumber\stackrel{\eqref{def:sk}\eqref{eq:sum_grad_extrap}}=&A_{k}\psi^\alpha(z_k) + \frac{\alpha}{2\tau_k}\|s_{k-1}\|^2 +\frac{\tau_k}{\alpha}\D_{\nu^*}(g\|g_k)+ \sum_{i=0}^{k-1}a_i[h^\alpha(v_{i+1})+\inner{v_{i+1}}{g}-f^*(g)]\\
            \nonumber&+\inner{s_{k-1}}{g_k-g}+\sum_{i=1}^{k-1}\left(\frac{\tau_i}{\alpha}\D_{\nu^*}(g_{i+1}\|g_{i})- \inner{s_{i-1}}{g_{i+1}-g_i}+ \frac{\alpha}{2\tau_{i}}\|s_{i-1}\|^2\right),
            \end{align}
            where the equality follows by Lemma~\ref{lem:sum_grad_extrap} and the definition of $s_k$. Then, using the 1-strong convexity of $\nu^*$, we obtain
            \begin{align}
           \nonumber A_0\psi^\alpha(z_0)+L^{-1}\D_{\nu^*}(g\|g_0)\stackrel{\eqref{def:sk}}\geq& A_{k}\psi^\alpha(z_k) + \sum_{i=0}^{k-1}a_i[h^\alpha(v_{i+1})+\inner{v_{i+1}}{g}-f^*(g)]\\
            \nonumber&+\frac{\tau_k}{2\alpha}\|g-g_k\|^2+\inner{s_{k-1}}{g_k-g} + \frac{\alpha }{2\tau_k}\|s_{k-1}\|^2\\
            \nonumber &+\sum_{i=1}^{k-1}\left(\frac{\tau_i}{2\alpha}\|g_{i+1}-g_{i}\|^2- \inner{s_{i-1}}{g_{i+1}-g_i}+ \frac{\alpha}{2\tau_{i}}\|s_{i-1}\|^2\right)\\
            \geq &A_{k}\psi^\alpha(z_k) +  \sum_{i=0}^{k-1}a_i[h^\alpha(v_{i+1})+\inner{v_{i+1}}{g}-f^*(g)],\label{ineq:gem_sum_last}
        \end{align}
        where the final line follows by the Cauchy-Schwarz inequality.
        It follows from Lemma~\ref{lem:beck_conj_pair}, \eqref{def:xz_gem}, and the definition of $\ell_{(h^\alpha)^*}(\cdot;z)$ in \eqref{def:ell} that for any $i\geq 0$,
        \begin{align}
            \nonumber h^\alpha(v_{i})+\inner{v_{i}}{g} &= h^\alpha(v_{i})-\inner{v_i}{-z_i}+\inner{v_{i}}{g-z_{i}}\\
            \nonumber&\stackrel{\eqref{def:xz_gem}}=-(h^\alpha)^*(-z_{i})-\inner{-\nabla (h^\alpha)^*(-z_i)}{g-z_{i}}\\
        &\stackrel{\eqref{def:ell}}=-\ell_{(h^\alpha)^*}(g;z_i).\label{eq:conj_lin_algebra}
        \end{align}
        Applying this identity to the summation on the right-hand side of~\eqref{ineq:gem_sum_last}, we have
        \begin{align}
            \sum_{i=0}^{k-1}a_i[h^\alpha(v_{i+1})+\inner{v_{i+1}}{g}-f^*(g)]
            \stackrel{\eqref{eq:conj_lin_algebra}}=& -A_k\Biggl(\sum_{i=0}^{k-1}\frac{a_i}{A_{k}}[\ell_{(h^\alpha)^*}(g;z_{i+1})+f^*(g)] \\
            &+\frac{A_0}{A_k}\left(\ell_{(h^\alpha)^*}(g;z_{0})+f^*(g)\right)\Biggr) 
            + A_0(\ell_{(h^\alpha)^*}(g;z_{0})+f^*(g))\\
            \stackrel{\eqref{eq:conj_lin_algebra}}= &-A_{k}\Gamma_{k}^*(g) - A_0(h^\alpha(v_{0})+\inner{v_0}{g}-f^*(g)).\label{eq:duality_sum_gamma_gem}
        \end{align}
        where the first equality follows from~\eqref{eq:conj_lin_algebra} and the second by the definition of the model $\Gamma_k^*(\cdot)$ induced by $(z_0, \{z_{i+1}\}_{i=0}^{k-1}, \{a_i/A_{i+1}\}_{i=0}^{k-1})$ with $A_0=1$ and~\eqref{eq:conj_lin_algebra} applied in reverse.
        
        Applying~\eqref{eq:duality_sum_gamma_gem} to~\eqref{ineq:gem_sum_last} and maximizing both sides over $g\in\dom f^*$ gives
        \begin{align*}
            A_0(\psi^\alpha(z_0)+\phi^\alpha(v_0))+L^{-1}\max_{g\in\dom f^*}\D_{\nu^*}(g\|g_0)&\geq A_{k}\left(\psi^\alpha(z_k) -\min_{g\in\R^n}\Gamma_{k}^*(g)\right),
        \end{align*}
        where we use
        \begin{equation*}
            h^\alpha(v_{0})+\max_{g\in\dom f^*}\{\inner{v_{0}}{g}-f^*(g)\}= \phi^\alpha(v_0).
        \end{equation*}
        Rearranging, using Lemma~\ref{lem:gem_technical}(c), and noting that $A_0=1$ gives the certificate convergence
        \begin{equation*}
            \psi^\alpha(z_k)-\min_{g\in\R^n}\Gamma_k^*(g)\leq \frac{\phi^\alpha(v_0)+\psi^\alpha(z_0)+ DL^{-1}}{\left(1+\frac{\sqrt\alpha}{2\sqrt{ L}}\right)^{2k}}.
        \end{equation*}
        The number of iterations $k$ to obtain $\psi^\alpha(z_k)-\min_g\Gamma_k^*(g)\leq \varepsilon/2$ is therefore 
        
        \begin{equation}
            k=\mathcal{O}\left(1+\sqrt{\frac{L}{\alpha}}\log\left(\frac{\phi^\alpha(v_0)+\psi^\alpha(z_0)+ DL^{-1}}{\varepsilon}\right)\right)
        \end{equation}
        by standard analysis. The result follows by defining the dual certificate (analogous to Definition~\ref{def:pd_cert}) as the pair $(z_k,\Gamma_k^*)$
        and the choice $\alpha=\varepsilon/(2M)$.\QEDA


\bibliographystyle{alpha}
\bibliography{ref}
\end{document}